\renewcommand{\thesection}{\arabic{section}}
\newtheorem{theorem}{Theorem}[section]
\newtheorem{lemma}[theorem]{Lemma}
\newtheorem{defi}[theorem]{Definition}
\newtheorem{corollary}[theorem]{Corollary}
\newtheorem{remark}[theorem]{Remark}
\renewcommand{\theequation}{\thesection .\arabic{equation}}
\let\subs\subsection
\renewcommand\subsection{\setcounter{equation}{0}
\gdef\theequation{\thesubsection \arabic{equation}}\subs}
\let\sect\section
\renewcommand\section{\setcounter{equation}{0}
\gdef\theequation{\thesection .\arabic{equation}}\sect}
\newcommand{\IR}{{\mathbb{R}}}
\newcommand{\IZ}{{\mathbb{Z}}}
\newcommand{\zv}{\IZ^\nu}
\newcommand{\rv}{\IR^\nu}
\newcommand{\be}{\begin{equation}}
\newcommand{\ee}{\end{equation}}
\newcommand{\nn}{\nonumber}
\newcommand{\ve}{\varepsilon}
\newcommand{\vp}{\varphi}
\newcommand{\ka}{\kappa}
\newtheorem*{thma}{Theorem A}
\newtheorem*{thmb}{Theorem B}
\def\wmap{\omega = (\omega_1,\dots, \omega_\nu)}
\def\zero{{(0)}}
\def\one{{(1)}}
\begin{document}

\title[The KdV Equation with Quasi-Periodic Initial Data] {ON THE EXISTENCE AND UNIQUENESS OF GLOBAL SOLUTIONS FOR THE KDV EQUATION WITH QUASI-PERIODIC INITIAL DATA}

\author{David Damanik \and Michael Goldstein}

\address{Department of Mathematics at Rice University, 6100 S. Main St. Houston TX 77005-1892, U.S.A.}

\email{damanik@rice.edu}

\address{Department of Mathematics, University of Toronto, Bahen Centre, 40 St. George St., Toronto, Ontario, CANADA M5S 2E4}

\email{gold@math.toronto.edu}

\thanks{The first author was partially supported by a Simons Fellowship and NSF grants DMS--0800100, DMS--1067988, DMS--1361625. The second author was partially supported by a Guggenheim Fellowship and an NSERC grant.}

\begin{abstract}
We consider the KdV equation
$$
\partial_t u +\partial^3_x u +u\partial_x u=0
$$
with quasi-periodic initial data whose Fourier coefficients decay exponentially and prove existence and uniqueness, in the class of functions which have an expansion with exponentially decaying Fourier coefficients, of a solution on a small interval of time, the length of which depends on the given data and the frequency vector involved. For a Diophantine frequency vector and for small quasi-periodic data (i.e., when the Fourier coefficients obey $|c(m)| \le \ve \exp(-\kappa_0 |m|)$ with $\ve > 0$ sufficiently small, depending on $\kappa_0 > 0$ and the frequency vector), we prove global existence and uniqueness of the solution. The latter result relies on our recent work \cite{DG} on the inverse spectral problem for the quasi-periodic Schr\"{o}dinger equation.
\end{abstract}

\maketitle

\section{The Main Results}

The Korteweg-de Vries equation (KdV)
\begin{equation} \label{eq:I.KdV}
\partial_t u + \partial_x^3 u + u \partial_x u = 0
\end{equation}
is a non-linear equation arising in various physical systems. It was derived in the late 19th century to describe the propagation of long waves. In the 1960's, physicists found applications of the equation to a number of other models; compare Gardner, Morikawa~\cite{GaMo}, Zabusky~\cite{Za}, see also the review by Jeffrey and Kakutani~\cite{JeKa} featuring a number of models leading to KdV and its more general counterparts. It was realized approximately at the same time that in addition to a clear relevance to physical processes, the equation possesses a fundamental feature --- it has plenty of integrals (conservation laws). The search for these integrals by Zabusky, Miura, Kruskal culminated in the fundamental discovery by Miura, Gardner, Kruskal~\cite{MGK} who observed that if $u(t,x)$ is a solution of the KdV equation \eqref{eq:I.KdV}, then the eigenvalues of the Sturm-Liouville operator
\begin{equation} 
H_t = -\frac{d^2}{dx^2} - \frac{u}{6}, \quad x \in \mathbb{R}
\end{equation}
do not depend on $t$. The same fact was established in \cite{GGKM} in the periodic setting where the operator $H_t$ is defined via periodic boundary conditions on a given interval. Lax, in his classical work \cite{Lax}, explained how one can see that the operators $H_t$ are actually unitarily conjugate and for this reason their spectra naturally do not depend on $t$. This work established the connection between KdV and the inverse spectral problem for Sturm-Liouville operators. At that time the latter had been extensively developed mainly due to the fundamental works by Gelfand, Levitan and Marchenko; see the monographs \cite{LeSa}, \cite{Mar}. For a modern development of the theory on a finite interval, see the monograph of P\"{o}schel and Trubowitz ~\cite{PoTr}. The final fundamental fact which defined the interconnection of the KdV equation with different domains of mathematics was observed by Gardner \cite{Ga} and by Faddeev and Zakharov ~\cite{FZ}: in the periodic setting it can be viewed as an infinite dimensional completely integrable Hamiltonian system. All these fundamental discoveries led to a very rich development of the mathematical theory of the KdV equation and also of a number of other integrable non-linear systems of differential and difference equations.

The theory allowed one to solve three classes of initial data, fast decaying, periodic, and finite-gap quasi-periodic. The periodic and finite gap quasi-periodic theory is especially rich due to strong connections with the the theory of completely integrable dynamical systems and algebraic geometry; compare, for example, McKean-van Moerbeke~\cite{McKvM}, McKean-Trubowitz~\cite{McKT}, Novikov ~\cite{No}, Dubrovin~\cite{Du}, \cite{Du2}, Dubrovin-Matveev-Novikov \cite{DMN}, and Flaschka-McLaughlin~\cite{FlMcL}.

The development of the theory for other classes of initial data seems to encounter serious difficulties. One possibility naturally comes from the concepts of dynamical systems, to study small Hamiltonian perturbations of periodic KdV via KAM theory. For periodic KdV, an infinite dimensional version of the Liouville Theorem on completely integrable systems was established due to the above-mentioned fundamental works by McKean and van Moerbeke~\cite{McKvM}, McKean and Trubowitz~\cite{McKT} and the remarkable work by Flaschka and McLaughlin ~\cite{FlMcL} who found the action variables. For that matter the set-up for a KAM theory is absolutely natural. This beautiful program was developed further through the work of Kuksin~\cite{Ku1}, who found the crucial ingredient needed, and also works by Kuksin and P\"{o}schel ~\cite{KuPo} and Kappeler and Makarov ~\cite{KaMa}. See the monographs \cite{Ku2}, \cite{KP} for a presentation of this theory.

Quasi-periodic initial data, such as
$$
u_0(x) = \cos (x) + \cos(\omega x)
$$
with irrational $\omega$, are naturally of interest, but present a major challenge (when they are not of finite-gap type). The integration of the KdV equation with quasi-periodic initial data was included in Problem~1 in Deift's list of problems in random matrix theory and the theory of integrable systems; compare \cite{De}. Theorems~A and B, stated and proved below, represent progress regarding this problem.

We consider the KdV equation
\begin{equation} \label{eq:1.KdV}
\partial_t u +\partial_x^3 u + u\partial_x u=0
\end{equation}
\textit{with the initial data}
\begin{equation}\label{eq:1-1KdVip}
u_0(x) =\sum_{n \in \zv}\, c(n) e^{i x n \omega},
\end{equation}
$\wmap \in \rv$, $n\omega = \sum n_j \omega_j$. First we study the initial value problem. This is the approach which comes from PDE. The initial value problem for \eqref{eq:1.KdV} has been extensively studied via PDE methods: the works by Bona and Smith ~\cite{BoS}, Saut and Temam~\cite{SaT}, Kato~\cite{Kat}, Kappeler~\cite{Ka}, Kenig, Ponce, Vega~\cite{KePoVe}, \cite{KePoVe1}, Bourgain ~\cite{Bo}, \cite{Bo1}, Colliander, Keel, Staffilani, Takaoka, Tao~\cite{CKSTT} represent remarkable progress in the
study of the local existence problem for low regularity data for both $\mathbb{R}$ and $\mathbb{T}$. The work of Kappeler and Topalov~\cite{KaT} uses the inverse spectral problem approach and establishes almost optimal
results in the $\mathbb{T}$ setting, the solution exists and is continuous for data from the Sobolev space $H^\beta$ with $\beta \ge -1$.

To this end we would also like to mention a connection involving the KdV equation with very low regularity and highly random initial data like realizations of white noise. It was communicated by J.~Quastel to the second author that there seems to be a very fine relation between the scaling limits of such solutions of the KdV equation and the scaling limits of solutions of equations such as the stochastic Burgers or Kardar-Parisi-Zhang equations.

The existence of global solutions, in both the $\mathbb{R}$ and $\mathbb{T}$ settings, follows from the local result due to  the complete integrability. For instance the functionals
\begin{equation} \label{eq:1.KdVintegrals}
\begin{split}
\Phi_2(u) & = \int  u^2 \, dx, \\ \Phi_3(u) & = \int ((\partial_x u)^2-\frac{u^3}{3}) \, dx, \\ \Phi_4(u) & = \int ((\partial_x^2 u)^2-\frac{5}{3}u (\partial_x u)^2+\frac{5}{34}u^4) \, dx
\end{split}
\end{equation}
are preserved by the KdV dynamics. The integration domain here is either the whole real line $\mathbb{R}$, or the circle $\mathbb{T}$ in the periodic setting. Using these integrals one can verify that a solution with initial data from $H^2$ stays in $H^2$, that is, the problem is well-posed in this class. Moreover, the $H^2$ norm of the solution stays bounded. This implies the existence of a global solution. The analysis for lower regularity data is of course a much harder problem which was solved in the works we cited.

The methods of the work we cited do not apply automatically to quasi-periodic data \eqref{eq:1-1KdVip}. We use the approach by Kenig, Ponce, Vega; see \cite{KePoVe}. Using this approach, Bourgain introduced his norm-projection method and established the existence of global solutions of the KdV equation with local $L^2$ periodic data; see \cite{Bo}. Tsugawa modified the method of Bourgain to the case when the function $u_0$ has the form $u_0(x) = \sum_{m \in \mathbb{Z}^\nu} c(m) e^{2 \pi i x m \omega}$ with Diophantine $\omega$ and with $|c(m)|\le B_0 (1+|m|)^{-A}$, where $A$ is large, and proved local well-posedness; see \cite{Tsu}. However, the method does not control the norm used. For this reason the method does not guarantee the existence of a global solution.

\smallskip

From the perspective of PDE methods, the main difficulty with quasi-periodic initial data is in the complicated nature of the conservation laws. For instance, the following appear to be natural candidates,
\begin{equation} \label{eq:1.KdVintegralsqp}
\begin{split}
\Phi_2(u) & = \lim_{A \to \infty} \frac{1}{2A} \int_{-A}^A u^2 \, dx, \\
\Phi_3(u) & = \lim_{A \to \infty} \frac{1}{2A} \int_{-A}^A ((\partial_x u)^2-\frac{u^3}{3}) \, dx, \\
\Phi_4(u) & = \lim_{A \to \infty} \frac{1}{2A} \int_{-A}^A ((\partial_x^2 u)^2-\frac{5}{3}u (\partial_x u)^2+\frac{5}{34}u^4) \, dx.
\end{split}
\end{equation}
However, there is no direct way to ``integrate by parts'' in order to verify that indeed these averages are preserved by the KdV dynamics. On the other hand, the Lax conjugation does work for the operators on the whole real line $\mathbb{R}$, and the spectrum is therefore preserved. \textit{In the current work we use the conservation of the spectrum to derive global solutions with quasi-periodic initial data from the local existence result.}

\medskip

The spectral problem for the quasi-periodic Sturm-Liouville operator and its discrete counterpart has been extensively studied in the past 40 years, starting with the ground-breaking work of Dinaburg and Sinai~\cite{DiSi}. They discovered the existence of so-called Bloch-Floquet solutions
\[
\psi(k, x) = \sum_{n \in \zv} \vp(n; k) e^{i x (n\omega+k)}
\]
for a large set of $k$'s, provided that the potential is analytic and small. In \cite{El}, Eliasson completely solved the problem by proving in the same setting the existence of Bloch-Floquet solutions for almost all $k$, and also the fact that the spectrum is purely absolutely continuous. For an extensive discussion of the spectral theory of discrete quasi-periodic Schr\"{o}dinger operators, see the monograph \cite{Bo2} by Bourgain. The spectrum in the quasi-periodic case is known to typically have a dense set of gaps. Furthermore, the structure of the eigenfunctions exhibits a phase transition as the magnitude of the potential grows. All these features present serious difficulties in the development of the theory. There is no inverse spectral theory available at this point. Recently the authors of the present paper developed a method which allowed them to obtain two-sided estimates relating the size of the gaps of the spectrum and the magnitude of the Fourier coefficients of the potential, see \cite{DG}. The method works for small analytic potentials only. In the current paper we therefore need to establish the well-posedness of the solution of the KdV equation in this class of potentials.

Throughout this paper we use the notation $|x|$ for the $\ell^1$-norm on $\mathbb{R}^k$, that is,
\[
|x|=\sum_j |x_j|,\quad x=(x_1,\dots,x_k)\in \mathbb{R}^k.
\]

\bigskip

\begin{thma}
Assume that $|c(n)| \le B_0 \exp(-\kappa |n|)$, where $B_0, \kappa > 0$ are constants. There exists $t_0 > 0$ such that for $0 \le t < t_0$, $x \in \mathbb{R}$, one can define a function
$$
u(t,x) = \sum_{n \in \zv}\, c(t,n) e^{i x n \omega},
$$
with $|c(t,n)| \le 2 B_0 \exp(-\frac{\kappa}{2} |n|)$, which obeys equation \eqref{eq:1.KdV} with the initial condition $u(0,x)=u_0(x)$.

Furthermore, assume $n \omega \neq 0$ for every $n\neq 0$. If
$$
v(t,x) = \sum_{n \in \zv}\, h(t,n) e^{i x n \omega},
$$
with $|h(t,n)|\le B \exp(-\rho |n|)$ for some constants $B , \rho > 0$, obeys equation \eqref{eq:1.KdV} with the initial condition $v(0,x) = u_0(x)$, then there exists $t_1 > 0$ such that $v(t,x) = u(t,x)$ for $0 \le t < t_1$, $x \in \mathbb{R}$.
\end{thma}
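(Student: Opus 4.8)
The plan is to substitute the Fourier ansatz $u(t,x)=\sum_{n\in\zv} c(t,n)e^{ixn\omega}$ into \eqref{eq:1.KdV} and read off the resulting infinite system of ODEs for the coefficients $c(t,n)$. The term $\partial_x^3 u$ produces the linear factor $-i(n\omega)^3$, and the nonlinearity $u\partial_x u$ produces a convolution, so the system takes the form
$$
\frac{d}{dt}c(t,n) = i(n\omega)^3\, c(t,n) - \frac{i}{2}(n\omega)\!\!\sum_{n'+n''=n}\!\! c(t,n')\,c(t,n'').
$$
It is convenient to pass to the "interaction picture" variable $b(t,n)=e^{-i(n\omega)^3 t}c(t,n)$, which removes the unbounded linear term (a pure phase rotation, harmless on the weighted $\ell^1$ spaces) and leaves a genuine fixed-point problem
$$
b(t,n) = c(0,n) - \frac{i}{2}\int_0^t (n\omega)\, e^{-i(n\omega)^3 s}\!\!\sum_{n'+n''=n}\!\! e^{i(n'\omega)^3 s}b(s,n')\,e^{i(n''\omega)^3 s}b(s,n'')\,ds.
$$

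First I would set up the Banach space $X_{\rho}$ of sequences $(a(n))_{n\in\zv}$ with norm $\|a\|_\rho=\sum_n |a(n)|e^{\rho|n|}$, and work in the space of continuous curves $[0,t_0]\to X_{\kappa/2}$ with the sup-in-time norm, inside the ball of radius $2B_0$ (adjusted for the $\zv$ normalization constant). The key algebraic fact is that $X_\rho$ is a Banach algebra under convolution, $\|a*a'\|_\rho\le \|a\|_\rho\|a'\|_\rho$, and that multiplication by the symbol $n\omega$ loses a derivative but is controlled by the strict drop in exponential weight: for $0<\rho'<\rho$ one has $|n\omega|\,e^{\rho'|n|}\le C(\omega,\rho-\rho')e^{\rho|n|}$. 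Thus the map
$$
\Phi(b)(t,n) = c(0,n) - \frac{i}{2}\int_0^t (n\omega)\,e^{-i(n\omega)^3 s}\!\!\sum_{n'+n''=n}\!\! e^{i(n'\omega)^3 s}b(s,n')\,e^{i(n''\omega)^3 s}b(s,n'')\,ds
$$
sends the ball of radius $2B_0$ in $C([0,t_0],X_{\kappa/2})$ into itself and is a contraction there, provided $t_0$ is chosen small depending on $B_0$, $\kappa$, and $\omega$ through the constant $C(\omega,\kappa/2)$ (and in particular $t_0$ may degenerate as the frequency vector has large entries, but this is allowed). The contraction mapping principle then yields the solution $b(t,\cdot)$, hence $c(t,n)=e^{i(n\omega)^3 t}b(t,n)$ with $|c(t,n)|\le 2B_0 e^{-\frac{\kappa}{2}|n|}$, and termwise differentiation (justified by the exponential decay) shows $u$ solves \eqref{eq:1.KdV} classically.

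For uniqueness, given a second solution $v$ with coefficients $h(t,n)$ satisfying $|h(t,n)|\le Be^{-\rho|n|}$, I would first observe that $h(t,\cdot)$ necessarily satisfies the same integral equation in $X_{\rho/2}$ (this requires checking that the a priori bound is strong enough to legitimize the termwise manipulations and the passage to the interaction picture). Then on a possibly shorter interval $[0,t_1]$, chosen small depending on $\max(B_0,B)$, $\min(\kappa,\rho)$, and $\omega$, both $e^{-i(n\omega)^3 t}c(t,n)$ and $e^{-i(n\omega)^3 t}h(t,n)$ are fixed points of $\Phi$ in the same ball of $C([0,t_1],X_{\min(\kappa,\rho)/2})$, where $\Phi$ is a contraction; uniqueness of the fixed point forces $c(t,n)=h(t,n)$ for all $n$ and $0\le t<t_1$, hence $v=u$ there.

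The main obstacle is the loss of one derivative from the factor $n\omega$ in the nonlinearity: unlike a semilinear problem with a smoothing or neutral nonlinearity, here one cannot close the estimate in a single fixed weight. The resolution is exactly the trade between a derivative and an exponential weight (analogous to a Cauchy–Kovalevskaya scheme), which is why the solution is obtained only in a scale of analytic-type spaces and why the existence time $t_0$ must be permitted to shrink — with no Diophantine condition, it genuinely depends on $\omega$. A secondary technical point is ensuring that the weak a priori hypothesis on $v$ in the uniqueness part (merely \emph{some} exponential rate $\rho$) suffices to place $v$ in the functional framework where the contraction argument applies; this is handled by noting that any $\rho>0$ gives membership in $X_{\rho/2}$ with the needed continuity in $t$, after which the argument is uniform on the common smaller interval.
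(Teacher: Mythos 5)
Your setup (Duhamel integral equation for the Fourier coefficients, exponentially weighted sequence spaces) is the same starting point as the paper, but the central analytic step as you state it fails, and the device that would replace it is never actually supplied. You claim that $\Phi$ maps a ball of $C([0,t_0],X_{\kappa/2})$ into itself and is a contraction there for small $t_0$. It does not: if $b(s,\cdot)$ is only known to lie in $X_{\kappa/2}$, then $b\ast b\in X_{\kappa/2}$ by the algebra property, but multiplication by $n\omega$ takes the result out of $X_{\kappa/2}$ (e.g.\ $b(n)=e^{-\kappa|n|/2}(1+|n|)^{-s}$ with $\nu<s\le\nu+1$ gives $b\in X_{\kappa/2}$ while $\sum_n|n\omega|\,|(b\ast b)(n)|e^{\kappa|n|/2}=\infty$), and shrinking $t_0$ cannot help because the divergence is in the $n$-sum at each fixed time, not in the time integral. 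The inequality you invoke, $|n\omega|e^{\rho'|n|}\le C(\omega,\rho-\rho')e^{\rho|n|}$ for $\rho'<\rho$, only shows that the nonlinearity maps $X_\rho$ into the strictly weaker space $X_{\rho'}$, with a constant blowing up as $\rho'\to\rho$; so the fixed-point scheme does not close in any single weight. Your last paragraph concedes exactly this (``one cannot close the estimate in a single fixed weight''), which contradicts the contraction claim in the body, and the resolution is only gestured at (``Cauchy--Kovalevskaya scheme'', ``scale of analytic-type spaces'') without being carried out. The same defect undermines your uniqueness argument, since it is also routed through the unavailable contraction.

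What is missing is precisely the quantitative mechanism that the paper builds: it analyzes the Picard iterates $c_k(t,n)$ through an explicit tree expansion, so that every derivative factor is a factor $|\mu(m^{(k)})|$ paid against the full $\kappa$-decay of the \emph{original} data sitting at the leaves (Lemmas~\ref{lem:Pexpansion}--\ref{lem:PexpansionsumAA}), the accumulated factorials $\sum_\alpha\prod_i\alpha_i!$ are bounded by $(2N)^N$ (Lemma~\ref{lem:PexpansionsumestA}), and the nested time integrations contribute the factor $t^{\mathfrak{l}(\gamma)}/\mathfrak{F}(\gamma)$, essentially a $1/k!$, which beats that growth for small $t$ (Lemma~\ref{lem:PexpansionsumF}, Corollaries~\ref{cor:12system1cestimates}--\ref{cor:cestimatesBkRE}); uniqueness is then obtained not from a contraction but by iterating the difference estimate $k$ times and letting $k\to\infty$ (Lemma~\ref{lem:12Fouriercontract} and Corollaries~\ref{cor:12Fouriercontract}--\ref{cor:12FouriercontractU}). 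Alternatively, one could genuinely run an abstract Cauchy--Kovalevskaya (Nirenberg--Nishida/Ovsyannikov) argument on the scale $\{X_\rho\}_{0<\rho<\kappa}$, using that the nonlinearity maps $X_\rho\to X_{\rho'}$ with norm $O((\rho-\rho')^{-1})$; but that argument, or the paper's combinatorial one, has to be actually executed --- as written, your proof of both existence and uniqueness rests on a self-map/contraction property that is false.
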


The derivation of Theorem~A proceeds, as pointed out above, via the Kenig-Ponce-Vega approach. We do not follow the method of Bourgain, but rather do an explicit combinatorial analysis of the iteration of the integral transformation defined via the approach by Kenig-Ponce-Vega, applied to the initial data. Let us mention that there are certain similarities between our derivation and the one in the work \cite{Ch} by Christ. The main difference is that we need to keep the exponential decay estimates in check.

Although the derivation is rather involved, the exponential decay of the Fourier coefficients plays out nicely with the combinatorial growth factors produced in the iterations. No small denominator problem enters the estimation. Due to this, our derivation of local existence and uniqueness does not rely on Diophantine properties of $\omega$, as opposed to Tsugawa's derivation.

Most importantly, the Fourier coefficients of the solution in Theorem~A obey an \textbf{exponential decay estimate}. This estimate is absolutely crucial in our derivation of the global solution. This is because the inverse spectral analysis in our work \cite{DG} applies only to the class of quasi-periodic Sturm-Liouvile operators with analytic potentials. In order to develop a similar analysis for the class of smooth potentials one will have to overcome a number of hard technical problems.

On the other hand, the spectral analysis of the quasi-periodic Sturm-Liouville operator relies heavily on the Diophantine condition on the vector $\omega$. Actually, almost all typical spectral results known for quasi-periodic operators assume Diophantine conditions. Furthermore, it is known that a Liouville frequency can change completely the structure of the spectrum and the eigenfunctions. It is also known that low regularity potentials can cause similar kinds of phenomena.
From this perspective, small quasi-periodic analytic potentials, for which there is no phase transition in the structure of the spectrum and eigenfunctions, appear to be a natural class of initial data for KdV.

\begin{thmb}
Assume that the vector $\omega$ satisfies the following Diophantine condition:
$$
|n\omega| \ge 2 \pi a_0 |n|^{-b_0}, \quad n \in \zv \setminus \{0\}
$$
with some $0 < a_0 < 1$, $\nu-1 < b_0 < \infty$. Given $\kappa_0 > 0$, there exists $\ve^\one = \ve^\one(\ka_0, a_0, b_0) > 0$ such that if $|c(n)| \le \ve^\one \exp(-\kappa_0 |n|)$, then for $0 \le t < \infty$, $x \in \mathbb{R}$, one can define a function
$$
u(t,x) = \sum_{n \in \zv}\, c(t,n) e^{i x n \omega}
$$
with $|c(t,n)| \le (\ve^\one)^{1/4} \exp(-\frac{\kappa_0}{8} |n|)$, which obeys equation \eqref{eq:1.KdV} with the initial condition $u(0,x) = u_0(x)$.

Moreover, let
$$
v(t,x) = \sum_{n \in \zv}\, h(t,n) e^{i x n \omega},
$$
with $|h(t,n)| \le B \exp(-\rho |n|)$ for some constants $B , \rho > 0$. If $v$ obeys \eqref{eq:1.KdV} for $t \ge 0$, $x \in \mathbb{R}$, with the same initial condition $v(0,x) = u_0(x)$, then $v(t,x) = u(t,x)$ for $t \ge 0$, $x \in \mathbb{R}$.
\end{thmb}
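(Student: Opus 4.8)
The plan is to derive Theorem~B from Theorem~A together with the integrable structure of \eqref{eq:1.KdV} and the inverse spectral analysis of \cite{DG}. The point is that \cite{DG} describes, for a small quasi-periodic potential with Diophantine frequency, the isospectral torus of $-\partial_x^2+u_0$ in a form in which \emph{every} potential in the torus has exponentially decaying Fourier coefficients with uniform constants; since the Schr\"odinger spectrum is preserved by the KdV flow, this supplies a time-independent a priori bound, which upgrades the local solution of Theorem~A to a global one. I would carry out the argument for real-valued $u_0$, i.e. $c(-n)=\overline{c(n)}$, the case relevant to the self-adjoint spectral theory of \cite{DG}; this class is preserved by \eqref{eq:1.KdV}, and by the uniqueness part of Theorem~A the solution $u(t,\cdot)$ is then real-valued, while the bound $|c(t,n)|\le 2B_0\exp(-\tfrac{\kappa}{2}|n|)$ shows that, for each $t$ in its interval of existence, $u(t,\cdot)$ is a bounded real-analytic quasi-periodic function of $x$ with frequency $\omega$ and a $t$-independent strip of analyticity.

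First I would set up the Lax pair. After the rescaling $u\mapsto-\tfrac{1}{6}u$, which preserves the above class, \eqref{eq:1.KdV} takes the form $\partial_t u+\partial_x^3 u-6u\partial_x u=0$, for which $L(t)=-\partial_x^2+u(t,\cdot)$ and $P(t)=-4\partial_x^3+3\bigl(u(t,\cdot)\partial_x+\partial_x u(t,\cdot)\bigr)$ satisfy $\partial_t L=[P,L]$ along solutions; since $P(t)$ is (formally) skew-adjoint, this conjugates $L(t)$ to $L(0)$, whence $\spec\bigl(-\partial_x^2+u(t,\cdot)\bigr)=\spec\bigl(-\partial_x^2+u_0\bigr)=:\cS$ for every $t$ in the interval of existence --- a statement that must be justified at the regularity supplied by Theorem~A. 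The flow also preserves the reflectionless property on $\cS$, so for each such $t$ the potential $u(t,\cdot)$ is a quasi-periodic function of $x$ with frequency $\omega$ and exponentially decaying Fourier coefficients which is reflectionless with spectrum $\cS$, i.e. it belongs to the isospectral torus of $u_0$ in the sense of \cite{DG}. The decisive input is then the following consequence of \cite{DG}: if $\omega$ is Diophantine as in Theorem~B and $\ve^\one=\ve^\one(\kappa_0,a_0,b_0)$ is small enough, then every member $w=\sum_{n\in\zv}d(n)e^{ixn\omega}$ of the isospectral torus of a $u_0$ with $|c(n)|\le\ve^\one\exp(-\kappa_0|n|)$ obeys $|d(n)|\le(\ve^\one)^{1/4}\exp(-\tfrac{\kappa_0}{8}|n|)$. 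Taking $w=u(t,\cdot)$ gives the a priori bound $|c(t,n)|\le(\ve^\one)^{1/4}\exp(-\tfrac{\kappa_0}{8}|n|)$, uniformly over the interval of existence of $u$. (If \cite{DG} is available in the stronger form that the KdV flow is globally defined on the isospectral torus and issues from $u_0$ along an explicit curve, then combining this with the uniqueness part of Theorem~A identifies that curve with the local solution and produces the global solution at once; the control of the torus required is the same.)

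Given the uniform bound, global existence follows by a continuation argument, which I would run as follows. Let $[0,T^*)$ be the maximal interval to which the local solution of Theorem~A extends, patched via the uniqueness part of Theorem~A, and suppose $T^*<\infty$. For any $t_*\in[0,T^*)$ the previous step gives $|c(t_*,n)|\le(\ve^\one)^{1/4}\exp(-\tfrac{\kappa_0}{8}|n|)$ with constants independent of $t_*$, so applying Theorem~A with initial data $u(t_*,\cdot)$ produces a solution on a time interval of length $\ge\delta$ for some $\delta=\delta\bigl((\ve^\one)^{1/4},\kappa_0/8\bigr)>0$ not depending on $t_*$, agreeing with $u$ where both are defined. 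Choosing $t_*$ with $T^*-t_*<\delta$ then extends $u$ past $T^*$, a contradiction; hence $T^*=\infty$, and the resulting global solution satisfies the displayed bound, which is the assertion of Theorem~B. Uniqueness is obtained the same way: given $v$ as in the statement, Theorem~A yields $v=u$ on some $[0,t_1)$; letting $T^{**}$ be the supremum of times up to which $v\equiv u$ and noting $v(T^{**},\cdot)=u(T^{**},\cdot)$ by continuity in $t$, if $T^{**}<\infty$ one applies the uniqueness part of Theorem~A at initial time $T^{**}$ --- legitimate since $v(T^{**},\cdot)$ still has exponentially decaying coefficients --- to conclude $v\equiv u$ on a strictly larger interval, contradicting maximality; thus $v(t,x)=u(t,x)$ for all $t\ge0$, $x\in\IR$.

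The hard part is the spectral step: extracting from \cite{DG} the uniform exponential decay of the Fourier coefficients of all potentials in the isospectral torus of a small Diophantine quasi-periodic potential, and verifying that $u(t,\cdot)$ indeed lies in that torus --- that is, making the Lax pair and reflectionlessness arguments rigorous at the modest regularity provided by Theorem~A, where $P(t)$ is an unbounded third-order operator. Once these points are settled, the continuation and uniqueness arguments above are routine.
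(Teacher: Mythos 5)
Your overall skeleton (isospectrality of $-\partial_x^2+u(t,\cdot)$ under the KdV flow, a spectral a priori bound from \cite{DG}, then continuation via Theorem~A) is the same as the paper's, but at the decisive step you substitute an input that is not available and leave it unproved. You need that every member of the isospectral torus of $u_0$ has Fourier coefficients bounded by $(\ve^\one)^{1/4}\exp(-\tfrac{\kappa_0}{8}|n|)$, together with preservation of reflectionlessness under the flow and membership of $u(t,\cdot)$ in that torus; you yourself flag this as ``the hard part.'' None of this is what \cite{DG} (as quoted) provides, and the paper's proof never needs it. The actual argument is much more economical: under the bootstrap hypothesis $|c(t,n)|\le(\ve^\one)^{1/4}\exp(-\tfrac{\kappa_0}{8}|n|)$ on $[0,T]$, the potential $u(t,\cdot)$ is \emph{itself} a small quasi-periodic function with Diophantine frequency, so \cite[Theorems~A,B]{DG} apply to it \emph{directly}: its labelled gaps coincide with those of $H_0$ by Lax's theorem, the gaps of $H_0$ obey $E^+_m-E^-_m\le 2\ve^\one\exp(-\tfrac{\kappa_0}{2}|m|)$ by the direct result applied to $u_0$, and the inverse result then yields the \emph{improved} bound $|c(t,m)|\le(2\ve^\one)^{1/2}\exp(-\tfrac{\kappa_0}{4}|m|)$. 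No torus description, no reflectionlessness, no identification of the flow with a curve on the torus is required.

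This is not merely a cosmetic difference, because the quantitative structure of the bootstrap depends on it. Applying Theorem~A to data bounded by $B_0\exp(-\kappa|n|)$ only returns the degraded bound $2B_0\exp(-\tfrac{\kappa}{2}|n|)$; if your a priori bound at time $t_*$ were only the same $(\ve^\one)^{1/4}\exp(-\tfrac{\kappa_0}{8}|n|)$ you started from, each continuation step would halve the decay rate, and you would have to re-invoke your (unproven) uniform torus bound to restore it. The paper closes the loop instead by gaining a factor at the spectral step: with $\ve^\one=\min\bigl(\ve_0(a_0,b_0,\kappa_0/8)^4/2,\ \ve^\zero(a_0,b_0,\kappa_0/2)/4\bigr)$, the improved bound $(2\ve^\one)^{1/2}\exp(-\tfrac{\kappa_0}{4}|n|)$ fed into Theorem~A yields $2(2\ve^\one)^{1/2}\exp(-\tfrac{\kappa_0}{8}|n|)<(\ve^\one)^{1/4}\exp(-\tfrac{\kappa_0}{8}|n|)$, so the hypothesis is restored on $[0,T+t_0]$ with a time step $t_0=t_0(a_0,b_0,\kappa_0/4)$ independent of $T$. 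You should also note that the regularity needed for Lax's isospectrality theorem (continuity and boundedness of $\partial_t u$, $\partial^\alpha_x u$, $\alpha\le3$) is supplied by Remark~\ref{rem.cont} and Corollary~\ref{cor:12system1cestimates}, rather than by a formal $\partial_t L=[P,L]$ conjugation argument, which at this regularity you would otherwise have to justify. Your continuation and uniqueness arguments at the end are fine and match the paper's ``standard arguments,'' but as written the proof of the global a priori bound --- the heart of Theorem~B --- is missing.
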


\begin{remark}\label{rem.egorova}
Let us mention that in \cite{Eg}, Egorova proved the existence of global solutions of the KdV equation with certain limit-periodic initial data $u_0(x)$, for which the spectrum of the Sturm-Liouville operator
\[
H_0 = -\frac{d^2}{dx^2} + u_0
\]
obeys certain hierarchical geometric conditions. Her result applies in particular to the limit-periodic functions studied by Pastur and Tkachenko in \cite{PT}. Note that a non-periodic function cannot be simultaneously limit-periodic and quasi-periodic, and hence there is no overlap between our result and hers.
\end{remark}
%

Before we prove the main results, we first develop in Section~\ref{sec.2} estimates on the iterations of the integral equation in question. The equation itself is the basic one in the Kenig-Ponce-Vega approach. The equation implies a system of nonlinear integral equations for the Fourier coefficients of the solution. The iteration of the system of equations produces summations of certain weights over complicated trees. The weights themselves are multiplicative functions of the tree branches when calculated along each linkage in the tree. The combinatorics is rather involved and the estimates require certain re-summations via isomorphisms between the tree parts and standard lattices. Through a series of technical lemmas we build up toward the main estimate of Section~\ref{sec.2}, which is Corollary~\ref{cor:cestimatesBkRE}. It states the exponential
convergence for the solutions of the iterated system of integral equations in question. In Section~\ref{sec.3} we invoke Corollary~\ref{cor:cestimatesBkRE} to show that the limit Fourier coefficients, which exist due to this corollary, indeed define a local solution of the KdV equation with given analytic quasi-periodic data. Moreover, the estimates developed in Section~\ref{sec.2} show that if the initial data was ``small,'' then so is the local solution. The estimates of Section~\ref{sec.2} also allow us to show that the local solution is unique in the class of analytic quasi-periodic solutions. The derivation of the global solution relies heavily on the complete integrability and the main estimates of \cite{DG}. The issue here is that the Fourier coefficients of the local solution
$$
u(t,x) = \sum_{n \in \zv}\, c(t,n) e^{i x n \omega}
$$
in Theorem~A just obey the estimate $|c(t,n)| \le 2 B_0 \exp(-\frac{\kappa}{2} |n|)$. This estimate is a bit worse than the estimate for the Fourier coefficients of the initial data. One therefore faces the problem that after several consecutive applications of Theorem A, the existence interval for the local solution could shrink. Here the conservation of the spectrum comes to the rescue. The spectrum of the quasi-periodic Sturm-Liouville operator $H_t$ in \eqref{eq:1.KdV} is the same as the spectrum of $H_0$. The two-sided estimates from \cite{DG} show that, although the Fourier coefficients in question can change, the gap estimates for $H_0$ keep the coefficients in check. Due to this fact the existence interval for the local solution will not shrink much, which in turn implies that there is a global solution.

\section*{Acknowledgment}

We are grateful to the referee for carefully reading the manuscript and providing an extensive list of excellent comments and suggestions.

\section{Preliminary Lemmas}\label{sec.2}

Let us start with an elementary observation:

\begin{lemma}\label{lem:12Fourier1}
Let $g(t)$ be a continuous function of $t \in [0,t_0)$, $t_0>0$ and let $\alpha \in \mathbb{R}$. Then, the following identity holds for $t \in [0,t_0)$:
\begin{align*}
\partial_t \int_0^t \exp(i [(t - \tau) \alpha^3 + \alpha x]) g(\tau) \, d \tau & = g(t) \exp(i \alpha x) - \partial^3_x \int_0^t \exp(i [(t - \tau) \alpha^3 + \alpha x]) g(\tau) \, d \tau.
\end{align*}
\end{lemma}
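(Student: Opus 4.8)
The plan is to differentiate under the integral sign and handle the boundary term via the fundamental theorem of calculus. Writing
$$
F(t,x) = \int_0^t \exp\bigl(i[(t-\tau)\alpha^3 + \alpha x]\bigr) g(\tau)\,d\tau = e^{i\alpha x}\int_0^t e^{i(t-\tau)\alpha^3} g(\tau)\,d\tau,
$$
I would first observe that the spatial dependence is the pure exponential factor $e^{i\alpha x}$, so $\partial_x^3 F = (i\alpha)^3 e^{i\alpha x}\int_0^t e^{i(t-\tau)\alpha^3} g(\tau)\,d\tau = -i\alpha^3 F$. This identifies the right-hand side as $g(t)e^{i\alpha x} + i\alpha^3 F(t,x)$, so the claim reduces to showing $\partial_t F = g(t) e^{i\alpha x} + i\alpha^3 F$.

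Next I would compute $\partial_t F$ directly. Since $g$ is continuous on $[0,t_0)$, the integrand $\Phi(t,\tau) := e^{i\alpha x} e^{i(t-\tau)\alpha^3} g(\tau)$ is continuous in $(t,\tau)$ and continuously differentiable in $t$ with $\partial_t \Phi(t,\tau) = i\alpha^3 \Phi(t,\tau)$, again continuous. The Leibniz rule for differentiating an integral with a variable upper limit then gives
$$
\partial_t F(t,x) = \Phi(t,t) + \int_0^t \partial_t \Phi(t,\tau)\,d\tau = e^{i\alpha x} g(t) + i\alpha^3 \int_0^t \Phi(t,\tau)\,d\tau = g(t)e^{i\alpha x} + i\alpha^3 F(t,x),
$$
where $\Phi(t,t) = e^{i\alpha x}e^{0}g(t) = g(t)e^{i\alpha x}$. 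Combining this with the identity $\partial_x^3 F = -i\alpha^3 F$ from the previous step yields exactly the asserted formula $\partial_t F = g(t)e^{i\alpha x} - \partial_x^3 F$.

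The only point requiring mild care — and the closest thing to an obstacle — is justifying the interchange of differentiation and integration when the upper limit of integration is itself the variable $t$; but this is the standard Leibniz integral rule, valid here because $g$ is merely assumed continuous (so $\Phi$ and $\partial_t\Phi$ are jointly continuous on a neighborhood of the relevant triangle in $(t,\tau)$-space), and one can equally well verify it by the substitution $\tau \mapsto t-s$, which moves the $t$-dependence entirely into the integrand and the limits $0$ and $t$, reducing everything to the elementary one-variable fundamental theorem of calculus plus dominated convergence on a compact subinterval $[0,t_0-\eta]$. No Diophantine or smallness hypotheses enter; the computation is purely formal once this interchange is secured.
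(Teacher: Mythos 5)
Your proof is correct and is exactly what the paper intends: the paper's own proof is the one-line remark that the identity ``follows readily by explicit differentiation on both sides,'' and your computation of $\partial_x^3 F = -i\alpha^3 F$ together with the Leibniz rule for $\partial_t F$ is just that differentiation carried out in detail. No further comment is needed.
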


\begin{proof}
This follows readily by explicit differentiation on both sides of the identities.
\end{proof}

The system of integral equations for the Fourier coefficients of the solution of KdV with quasi-periodic initial data,
which we mentioned in the introduction, follows from the differentiation formula in Lemma~\ref{lem:12Fourier1} combined with the idea of the Kenig-Ponce-Vega approach:

\begin{lemma}\label{lem:12Fourier2}
Let $c(t,n)$, $g(t,n)$ be continuous functions of $t \in [0, t_0)$, $n \in \mathbb{Z}^\nu$. Assume that
\begin{equation}\label{eq:12Fourierccond}
\sup_{t} \sum_{n \in \mathbb{Z}^\nu} (1+ |n|^3) (|c(t,n)| + |g(t,n)|) < \infty.
\end{equation}
Let $\omega \in \mathbb{R}^\nu$. Assume that the integral equations
\begin{equation}\label{eq:12Fourier2}
c(t,n) = c(0,n) \exp(i t(n\omega)^3) + \int_0^t \exp(i(t - \tau) (n\omega)^3) g(\tau,n) \, d \tau, \quad n \in \mathbb{Z}^\nu
\end{equation}
hold for $n\in\mathbb{Z}^\nu$. Then the function
$$
u(t,x) = \sum_{n \in \mathbb{Z}^\nu} c(t,n) \exp(ixn\omega)
$$
obeys the differential equation
$$
\partial_t u = -\partial^3_x u + v
$$
with $u(0,x) = u_0(x)$, where
\begin{align*}
u_0(x) & = \sum_{n \in \mathbb{Z}^\nu} c(0,n) \exp(i x n \omega), \\
v(t,x) & = \sum_{n \in \mathbb{Z}^\nu} g(t,n) \exp(i x n \omega).
\end{align*}
The functions $\partial_t u, -\partial^3_x u, v$ are continuous throughout  the domain $t\in[0,t_0)$, $x\in\mathbb{R}$.
\end{lemma}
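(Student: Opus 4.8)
The plan is to establish the differential equation and the continuity assertions termwise in the Fourier expansion, and then to sum, using the weighted summability \eqref{eq:12Fourierccond} to legitimize every interchange of $\sum_n$ with $\partial_t$ and $\partial^3_x$.

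First, fix $n \in \mathbb{Z}^\nu$ and set $u_n(t,x) = c(t,n)\exp(ixn\omega)$. Multiplying \eqref{eq:12Fourier2} by $\exp(ixn\omega)$ gives
\[
u_n(t,x) = c(0,n)\exp\bigl(i[t(n\omega)^3 + xn\omega]\bigr) + \int_0^t \exp\bigl(i[(t-\tau)(n\omega)^3 + xn\omega]\bigr)\,g(\tau,n)\,d\tau .
\]
Direct differentiation shows that the first term solves the homogeneous equation $\partial_t w = -\partial^3_x w$ (since $(in\omega)^3 = -i(n\omega)^3$), while Lemma~\ref{lem:12Fourier1}, applied with $\alpha = n\omega$ and $g(\tau) = g(\tau,n)$, takes care of the second term; adding the two yields the pointwise identity
\[
\partial_t u_n = -\partial^3_x u_n + g(t,n)\exp(ixn\omega),
\]
with $u_n$, $\partial_t u_n$, $\partial^3_x u_n$ continuous on $[0,t_0)\times\mathbb{R}$. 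Equivalently, one may just differentiate \eqref{eq:12Fourier2} in $t$ to obtain $\partial_t c(t,n) = i(n\omega)^3 c(t,n) + g(t,n)$, which is the same fact.

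Next, sum over $n$. Since $|n\omega| \le |\omega|\,|n|$, each of the series $\sum_n u_n$, the series $\sum_n \partial^j_x u_n$ for $j = 1,2,3$, and the series $\sum_n \partial_t u_n = \sum_n \bigl(i(n\omega)^3 c(t,n) + g(t,n)\bigr)\exp(ixn\omega)$ is dominated, term by term, by a fixed multiple of $|n|^3\bigl(|c(t,n)| + |g(t,n)|\bigr)$, which is summable by \eqref{eq:12Fourierccond}. For each fixed $t$ this gives uniform convergence in $x$, so $x \mapsto u(t,x)$ is of class $C^3$ and may be differentiated term by term; together with the corresponding statement for the $t$-derivative this produces $\partial_t u = -\partial^3_x u + v$. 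Setting $t = 0$ in \eqref{eq:12Fourier2} makes the integral drop out and reproduces $c(0,n)$, so $u(0,x) = \sum_n c(0,n)\exp(ixn\omega) = u_0(x)$. Note that the cubic weight in \eqref{eq:12Fourierccond} is used exactly here: it is precisely what absorbs the factor $(in\omega)^3 = O(|n|^3)$ produced by $\partial^3_x$.

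Finally, for continuity of $\partial_t u$, $\partial^3_x u$, and $v$ on $[0,t_0)\times\mathbb{R}$: each is a Fourier series whose individual terms are continuous in $(t,x)$, so it is enough to show uniform convergence on $[0,T]\times\mathbb{R}$ for every $T < t_0$. A tail $\sum_{|n|>N}$ is bounded by a constant times $\sum_{|n|>N}|n|^3\bigl(|c(t,n)| + |g(t,n)|\bigr)$, and one invokes the hypothesis $\sup_t \sum_n |n|^3\bigl(|c(t,n)| + |g(t,n)|\bigr) < \infty$. The point I expect to require the most care is making this tail bound uniform in $t$, rather than merely pointwise in $t$; the role of the $\sup_t$ in \eqref{eq:12Fourierccond} is exactly to supply this, and in all applications of the lemma the coefficients $c(t,n)$, $g(t,n)$ additionally satisfy a $t$-independent exponential bound (as in the statement of Theorem~A), so that a Weierstrass $M$-test applies directly. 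The remainder is the elementary differentiation already packaged in Lemma~\ref{lem:12Fourier1}.
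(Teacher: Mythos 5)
Your proof is correct and follows essentially the same route as the paper: termwise differentiation of $c(t,n)\exp(ixn\omega)$ via Lemma~\ref{lem:12Fourier1} together with the explicit differentiation of the homogeneous term, and then summation over $n$, with all interchanges of $\sum_n$ with $\partial_t$ and $\partial_x^3$ justified by the weighted summability \eqref{eq:12Fourierccond}. You are in fact somewhat more explicit than the paper (which simply asserts absolute and uniform convergence from \eqref{eq:12Fourierccond}) about where the cubic weight is used and about the uniform-in-$t$ continuity issue, so no changes are needed.
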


\begin{proof}
Using the equations \eqref{eq:12Fourier2} and Lemma~\ref{lem:12Fourier1}, one obtains
\begin{align*}
\partial_t u & = \sum_{n \in \mathbb{Z}^\nu} \partial_t c(t,n) \exp(i x n \omega) \\
& = \sum_{n \in \mathbb{Z}^\nu} \partial_t \left[ c(0,n) \exp(i t(n \omega)^3) + \int_0^t \exp(i(t - \tau) (n \omega)^3) g(\tau,n) \, d \tau \right] \exp(i x n \omega) \\
& = \sum_{n \in \mathbb{Z}^\nu} \partial_t \left[ c(0,n) \exp(i t(n \omega)^3 + i x n \omega) + \int_0^t \exp(i[(t - \tau)(n \omega)^3 + x n \omega]) g(\tau,n) \, d \tau \right] \\
& = \sum_{n \in \mathbb{Z}^\nu} -\partial^3_x \left[ c(0,n) \exp(i t(n\omega)^3 + i x n \omega) \right] + g(t,n) \exp(2 \pi ixn\omega) - \partial^3_x \int_0^t \exp(i[(t - \tau) (n \omega)^3 + x n \omega]) g(\tau,n) \, d \tau \\
& = \sum_{n \in \mathbb{Z}^\nu} -\partial^3_x \left[ c(0,n) \exp(i t(n \omega)^3 + i x n \omega) + \int_0^t \exp(i[(t - \tau) (n \omega)^3 + x n \omega]) g(\tau,n) \, d \tau \right] + g(t,n) \exp(i x n \omega) \\
& = \sum_{n \in \mathbb{Z}^\nu} - \partial^3_x \left[ \Bigl( c(0,n) \exp(i t (n \omega)^3) + \int_0^t \exp(i(t - \tau) (n \omega)^3) g(\tau,n) \, d \tau \Bigr) \exp(i x n \omega) \right] + \sum_{n \in \mathbb{Z}^\nu} g(t,n) \exp(2 \pi ixn\omega) \\
& = \sum_{n \in \mathbb{Z}^\nu} - \partial^3_x c(t,n) \exp(i x n \omega) + \sum_{n \in \mathbb{Z}^\nu} g(t,n) \exp(i x n \omega) \\
& = -\partial^3_x u + v,
\end{align*}
as claimed. The absolute and uniform convergence of all series involved follows from condition \eqref{eq:12Fourierccond}.
\end{proof}

We want to iterate the system \eqref{eq:12Fourier2}. As was explained in the introduction, we have to start the iteration from an exponentially decaying collection of Fourier coefficients and keep this property in check. Let $c(n)$ be a function of $n \in \mathbb{Z}^\nu$ such that
\begin{equation}\label{eq:decay}
|c(n)| \le B \exp(-\kappa|n|),
\end{equation}
where $B, \kappa > 0$ are constants. For convenience \textbf{we always assume that $\kappa\le 1$}. Let $\omega \in \mathbb{R}^\nu$. Set
\begin{equation}\label{eq:c0def}
c_0(t,n) = c(n) \exp(i t(n\omega)^3),
\end{equation}
and for $k = 1,2,\dots$,
\begin{equation}\label{eq:12FourierFdefi1a}
c_k(t,n) = c(n) \exp(i t(n\omega)^3) -\frac{i n \omega}{2} \int_0^t \exp(i(t - \tau) (n \omega)^3) \sum_{m_1, m_2 \in \mathbb{Z}^\nu : m_1 + m_2 = n} c_{k-1} (\tau,m_1) c_{k-1} (\tau,m_2) \, d \tau,
\end{equation}
$n \in \mathbb{Z}^\nu$.

\begin{remark}\label{rem.cont}
$(1)$ We will show by induction that for some $t_0 > 0$, the functions $c_k(t,n)$ are well-defined and continuous for $0 \le t \le t_0$ and that the series in \eqref{eq:12FourierFdefi1a} converges absolutely and uniformly on the interval $0 \le t \le t_0$.

$(2)$ The structure of the summation in \eqref{eq:12FourierFdefi1a} suggests to label the terms of the iterated equation via points from a tree whose branches originate at points on the lattice $\mathbb{Z}^\nu$, with the branch split over the condition $m_1 + m_2 = constant$. Below we introduce inductively the branches $\gamma$ and the ``cumulative degree'' $\mathfrak{F}(\gamma)$ calculated along a given branch. To bookkeep the terms of the iterated equation, we also need to attach to each branch an appropriate lattice $\mathbb{Z}^K$. Ultimately, we define certain weights which enable us to develop an estimation technique for the iterated equations. Although the definition of these objects looks rather complicated, a closer look at the integral equation structure shows that they arise naturally from it via induction over the number of iterations of the equation.
\end{remark}

Set
\begin{equation}\label{eq:12Bernoulli1}
\Gamma^{(1)} = \{0,1\}, \quad \Gamma^{(k)} = \{0\} \cup \Gamma^{(k-1)} \times \Gamma^{(k-1)}, \quad k = 2, \dots,
\end{equation}
and
\begin{align}\label{eq:12Bernoulli12}
\mathfrak{l}(\gamma) & = \begin{cases} \gamma & \text{if $\gamma \in \Gamma^{(1)}$}, \\ 0 & \text{if $\gamma = 0 \in \Gamma^{(k)}$, $k \ge 2$}, \\ \mathfrak{l}(\gamma^{(k-1)}_1) + \mathfrak{l}(\gamma^{(k-1)}_2) + 1 & \text{if $k \ge 2$, $\gamma = (\gamma^{(k-1)}_1,\gamma^{(k-1)}_2) \in \Gamma^{(k-1)} \times \Gamma^{(k-1)}$}, \end{cases},
\end{align}
\begin{align}\label{eq:12Bernoulli12a}
\mathfrak{d}(\gamma) & = \begin{cases} 1 & \text{if $\gamma = 0 \in \Gamma^{(k)}$}, \\ 2 & \text{if $\gamma = 1 \in \Gamma^{(1)}$}, \\ \mathfrak{d}(\gamma^{(k-1)}_1) + \mathfrak{d}(\gamma^{(k-1)}_2) & \text{if $k \ge 2$, $\gamma = (\gamma^{(k-1)}_1, \gamma^{(k-1)}_2) \in \Gamma^{(k-1)} \times \Gamma^{(k-1)}$}, \end{cases},
\end{align}
\begin{align}\label{eq:12Bernoulli12b}
\mathfrak{F}(\gamma) & = \begin{cases} 1 & \text{if $\gamma \in \Gamma^{(1)}$ or if $\gamma = 0 \in \Gamma^{(k)}$, $k\ge 2$}, \\ \mathfrak{l}(\gamma) \mathfrak{F}(\gamma^{(k-1)}_1) \mathfrak{F}(\gamma^{(k-1)}_2) & \text{if $k \ge 2$, $\gamma = (\gamma^{(k-1)}_1,\gamma^{(k-1)}_2) \in \Gamma^{(k-1)} \times \Gamma^{(k-1)}$.} \end{cases}
\end{align}

Note that just from the definition,
\[ \mathfrak{d}(\gamma)=\mathfrak{l}(\gamma)+1 \].

Furthermore, set
\begin{align}
\mathfrak{M}^{(k,\gamma)} & = \begin{cases} \mathbb{Z}^\nu & \text{if $\gamma = 0 \in \Gamma^{(k)}$}, \\ \mathbb{Z}^\nu \times \mathbb{Z}^\nu & \text{if $\gamma \in \Gamma^{(1)}$, $\gamma = 1$}, \\ \mathfrak{M}^{(k-1,\gamma^{(k-1)}_1)} \times \mathfrak{M}^{(k-1,\gamma^{(k-1)}_2)} & \text{if $\gamma \in \Gamma^{(k)}$,$k\ge 2$, $\gamma = (\gamma^{(k-1)}_1,\gamma^{(k-1)}_2) \in \Gamma^{(k-1)} \times \Gamma^{(k-1)}$}, \end{cases} \\
\mathfrak{B}^{(k,\gamma)} & = \begin{cases} \mathbb{Z} & \text{if $\gamma = 0 \in \Gamma^{(k)}$}, \\ \mathbb{Z} \times \mathbb{Z} & \text{if $\gamma \in \Gamma^{(1)}$, $\gamma = 1$}, \\ \mathfrak{B}^{(k-1,\gamma^{(k-1)}_1)} \times \mathfrak{B}^{(k-1,\gamma^{(k-1)}_2)} & \text{if $\gamma \in \Gamma^{(k)}$, $k\ge 2$, $\gamma = (\gamma^{(k-1)}_1,\gamma^{(k-1)}_2) \in \Gamma^{(k-1)} \times \Gamma^{(k-1)}$}, \end{cases} \\
\mu(\mathfrak{m}) & = \sum_j m_j, \quad \text{where $\mathfrak{m} = (m_1,\dots ,m_N)$, $m_j \in \mathbb{Z}^\lambda$},
\end{align}
and for $m^{(k)} \in \mathfrak{M}^{(k,\gamma)}$ and $t > 0$,
\begin{align}\label{eq:12Bernoulli12c}
|m^{(k)}|& = \begin{cases} |m| & \text{if $\gamma = 0 \in \Gamma^{(k)}$, $m^{(k)}=m\in\mathbb{Z}^\nu$}, \\ |m_1|+|m_2| & \text{if $\gamma \in \Gamma^{(1)}$, $\gamma = 1$, $m^{(1)} = (m_1,m_2)$}, \\ | m^{(k-1)}_1|+|m^{(k-1)}_2| & \text{if $\gamma \in \Gamma^{(k)}$, $\gamma = (\gamma^{(k-1)}_1,\gamma^{(k-1)}_2) \in \Gamma^{(k-1)} \times \Gamma^{(k-1)}$},\\ & \quad \text{ $m^{(k)} = (m^{(k-1)}_1,m^{(k-1)}_2) \in \mathfrak{M}^{(k-1,\gamma^{(k-1)}_1)} \times \mathfrak{M}^{(k-1,\gamma^{(k-1)}_2)}$}, \end{cases},
\end{align}
\begin{align}\label{eq:12Bernoulli12d}
\mathfrak{f}(m^{(k)}) & = \begin{cases} 1 & \text{if $\gamma = 0 \in \Gamma^{(k)}$, $m^{(k)} \in \mathfrak{M}^{(k,\gamma)}$}, \\ -\frac{i \mu(m^{(1)})\omega}{2} & \text{if $\gamma \in \Gamma^{(1)}$, $\gamma = 1$, $m^{(1)} \in \mathfrak{M}^{(1,\gamma)}$}, \\
-\frac{i \mu(m^{(k)}) \omega}{2} \mathfrak{f}(m^{(k-1)}_1) \mathfrak{f}(m^{(k-1)}_2) & \text{if $\gamma \in \Gamma^{(k)}$, $k \ge 2$,} \\ & \gamma = (\gamma^{(k-1)}_1,\gamma^{(k-1)}_2) \in \Gamma^{(k-1)} \times \Gamma^{(k-1)}, \\ & \text{$m^{(k)} = (m^{(k-1)}_1,m^{(k-1)}_2) \in \mathfrak{M}^{(k-1,\gamma^{(k-1)}_1)} \times \mathfrak{M}^{(k-1,\gamma^{(k-1)}_2)}$,} \end{cases}
\end{align}
\begin{align}
\mathfrak{P}(m^{(k)}) & = \begin{cases} 1 & \text{if $m^{(k)} \in \mathfrak{M}^{(k,\gamma)}$, $\gamma = 0 \in \Gamma^{(k)}$}, \\ |\mu(m^{(1)})| & \text{if $k = 1$, $m^{(k)} \in \mathfrak{M}^{(k,\gamma)}$, $\gamma = 1 \in \Gamma^{(k)}$}, \\ |\mu(m^{(k)})| \mathfrak{P}(m^{(k-1)}_1) \mathfrak{P}(m^{(k-1)}_2) & \text{if $m^{(k)} \in \mathfrak{M}^{(k,\gamma)}$, $k \ge 2$}, \\ & \gamma = (\gamma^{(k-1)}_1,\gamma^{(k-1)}_2) \in \Gamma^{(k-1)} \times \Gamma^{(k-1)}, \\ & m^{(k)} = (m^{(k-1)}_1, m^{(k-1)}_2) \in \mathfrak{M}^{(k-1,\gamma^{(k-1)}_1)} \times \mathfrak{M}^{(k-1,\gamma^{(k-1)}_2)}, \end{cases} \label{eq:12FourierB2a} \\
I(t,m^{(k)}) & = \begin{cases} \exp(i t(\mu(m^{(k)})\omega)^3) \quad \text{if $\gamma = 0 \in \Gamma^{(k)}$, $m^{(k)} \in \mathfrak{M}^{(k,\gamma)}$}, \\ \int_0^t \exp(i(t - \tau)(\mu(m^{(1)})\omega)^3) \exp(i \tau(m_1\omega)^3) \exp(i \tau(m_2\omega)^3) \, d \tau \\ \quad \qquad \qquad \qquad \qquad \text{if $\gamma \in \Gamma^{(1)}$, $\gamma = 1$, $m^{(1)} = (m_1,m_2) \in \mathfrak{M}^{(1,\gamma)}$}, \\ \int_0^t \exp(i(t - \tau) (\mu(m^{(k)})\omega)^3) I(\tau,m^{(k-1)}_1) \times I(\tau,m^{(k-1)}_2) \, d \tau \\ \quad \qquad \qquad \qquad \qquad \text{if $\gamma \in \Gamma^{(k)}$, $k \ge 2$, $\gamma = (\gamma^{(k-1)}_1,\gamma^{(k-1)}_2) \in \Gamma^{(k-1)} \times \Gamma^{(k-1)}$,} \\ \quad \qquad \qquad \qquad \qquad \text{$m^{(k)} = (m^{(k-1)}_1,m^{(k-1)}_2) \in \mathfrak{M}^{(k-1,\gamma^{(k-1)}_1)} \times \mathfrak{M}^{(k-1,\gamma^{(k-1)}_2)}$}. \end{cases}
\label{eq:12FourierB2}
\end{align}

\begin{remark}\label{rem.cont123}
The functions $\mathfrak{f}^{(k,\gamma)}, (m^{(k)}) I^{(k,\gamma)}(t,m^{(k)})$ represent the basic terms in the expansion for the iterated Fourier coefficients. Namely, in part $(2)$ of Corollary~\ref{cor:12system1cestimates} we show that
$$
c_k(t,n) = \sum_{\gamma \in \Gamma^{(k)}} \sum_{m^{(k)} \in \mathfrak{M}^{(k,\gamma)} : \mu(m^{(k)}) = n} \mathfrak{C} (m^{(k)}) \mathfrak{f}^{(k,\gamma)} (m^{(k)}) I^{(k,\gamma)}(t,m^{(k)}).
$$
with appropriate coefficients $\mathfrak{C} (m^{(k)})$. To explain the convergence of the series here we first develop estimates for these functions. After that we prove the expansion.
\end{remark}

In the next lemma we develop the basic rules for the evaluation of the functions introduced.

\begin{lemma}\label{lem:12system1}
The following statements hold:

$(1)$
\begin{align*}
\big| I(t,m^{(1)}) \big| & \le 1 \quad \text{if $\gamma \in \Gamma^{(1)}$, $\gamma = 0$, $m^{(1)} \in \mathfrak{M}^{(1,\gamma)}$}, \\
\big| I(t,m^{(1)}) \big| & \le t \quad \text{if $\gamma \in \Gamma^{(1)}$, $\gamma = 1$, $m^{(1)} \in \mathfrak{M}^{(1,\gamma)}$}, \\
\big| I(t,m^{(k)}) \big| & \le 1 \quad \text{if $\gamma = 0 \in \Gamma^{(k)}$, $\gamma = 0$, $m^{(k)} \in \mathfrak{M}^{(k,\gamma)}$, $k \ge 2$}, \\
\big| I(t,m^{(k)}) \big| & \le \frac{t^{\mathfrak{l}(\gamma)}}{\mathfrak{F}(\gamma)} \quad \text{if $\gamma \in \Gamma^{(k)}$, $k \ge 2$, $\gamma = (\gamma^{(k-1)}_1, \gamma^{(k-1)}_2) \in \Gamma^{(k-1)} \times \Gamma^{(k-1)}$}.
\end{align*}

$(2)$
\begin{align*}
|\mathfrak{f}(m^{(1)})| & = 1 \quad \text{if $\gamma \in \Gamma^{(1)}$, $\gamma = 0$, $m^{(1)} \in \mathfrak{M}^{(1,\gamma)}$}, \\
|\mathfrak{f}(m^{(1)})| & \le |\omega| |\mu(m^{(1)})| \quad \text{if $\gamma \in \Gamma^{(1)}$, $\gamma = 1$, $m^{(1)} \in \mathfrak{M}^{(1,\gamma)}$}, \\
|\mathfrak{f}(m^{(k)})| & = 1 \quad \text{if $\gamma = 0 \in \Gamma^{(k)}$, $k \ge 2$, $m^{(k)} \in \mathfrak{M}^{(k,\gamma)}$}, \\
|\mathfrak{f}(m^{(k)})| & \le |\omega|^{\mathfrak{l}(\gamma)} \mathfrak{P}(m^{(k)}) \quad \text{if $\gamma \in \Gamma^{(k-1)} \times \Gamma^{(k-1)}$, $m^{(k)} \in \mathfrak{M}^{(k,\gamma)}$}.
\end{align*}
\end{lemma}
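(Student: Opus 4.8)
\medskip

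\noindent\textbf{Proof plan.} The plan is to establish both parts simultaneously by induction on $k$, with the inductive step running along the recursive structure of $\Gamma^{(k)}$. The guiding principle is that the combinatorial quantities $\mathfrak{l}(\gamma)$, $\mathfrak{F}(\gamma)$ and $\mathfrak{P}(m^{(k)})$ are rigged so as to record exactly the growth produced by one step of the recursion \eqref{eq:12FourierB2}, respectively of the recursion defining $\mathfrak{f}$. Before starting, I would fix the bookkeeping conventions $\mathfrak{l}(0)=0$, $\mathfrak{F}(0)=1$ on the leaves $0\in\Gamma^{(k)}$, and $\mathfrak{l}(1)=1$, $\mathfrak{F}(1)=1$, $\mathfrak{P}(m^{(1)})=|\mu(m^{(1)})|$ on the leaf $1\in\Gamma^{(1)}$, so that in every case the two assertions read uniformly as $|I(t,m^{(k)})|\le t^{\mathfrak{l}(\gamma)}/\mathfrak{F}(\gamma)$ and $|\mathfrak{f}(m^{(k)})|\le|\omega|^{\mathfrak{l}(\gamma)}\mathfrak{P}(m^{(k)})$ (the latter up to a harmless numerical factor); the displayed bounds for $\gamma=0$ and $\gamma=1$ are then just these formulas with the above values substituted.

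For part $(1)$ I would first handle the leaves: for $\gamma=0$ the quantity $I(t,m^{(k)})$ is an exponential $\exp(it\,\theta^3)$ with $\theta=\mu(m^{(k)})\omega\in\mathbb{R}$, hence of modulus $1$; for $\gamma=1\in\Gamma^{(1)}$ the integrand defining $I(t,m^{(1)})$ has modulus $1$, so $|I(t,m^{(1)})|\le\int_0^t d\tau=t$. For the inductive step with $\gamma=(\gamma^{(k-1)}_1,\gamma^{(k-1)}_2)$ I would bound the defining integral by $\int_0^t|I(\tau,m^{(k-1)}_1)|\,|I(\tau,m^{(k-1)}_2)|\,d\tau$, insert the inductive hypotheses, and compute $\int_0^t\tau^{\mathfrak{l}(\gamma^{(k-1)}_1)+\mathfrak{l}(\gamma^{(k-1)}_2)}\,d\tau=t^{\mathfrak{l}(\gamma^{(k-1)}_1)+\mathfrak{l}(\gamma^{(k-1)}_2)+1}/\bigl(\mathfrak{l}(\gamma^{(k-1)}_1)+\mathfrak{l}(\gamma^{(k-1)}_2)+1\bigr)$. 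Comparing with the defining relations $\mathfrak{l}(\gamma)=\mathfrak{l}(\gamma^{(k-1)}_1)+\mathfrak{l}(\gamma^{(k-1)}_2)+1$ and $\mathfrak{F}(\gamma)=\mathfrak{l}(\gamma)\mathfrak{F}(\gamma^{(k-1)}_1)\mathfrak{F}(\gamma^{(k-1)}_2)$ shows the outcome is precisely $t^{\mathfrak{l}(\gamma)}/\mathfrak{F}(\gamma)$, closing the induction.

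For part $(2)$ the leaves $\gamma=0$ give $\mathfrak{f}=1$ by definition, and for $\gamma=1\in\Gamma^{(1)}$ the elementary estimate $|p\omega|\le|\omega|\,|p|$ for $p\in\mathbb{Z}^\nu$ gives $|\mathfrak{f}(m^{(1)})|=\tfrac12|\mu(m^{(1)})\omega|\le|\omega|\,|\mu(m^{(1)})|$, equivalently the stated bound $\lambda|\mu(m^{(1)})|$ once the constant is absorbed. For $\gamma=(\gamma^{(k-1)}_1,\gamma^{(k-1)}_2)$, $k\ge2$, I would take moduli in the recursion for $\mathfrak{f}$ (noting the $\tau$ appearing there is spurious, since $\mathfrak{f}$ carries no $\tau$-dependence), apply $|\mu(m^{(k)})\omega|\le|\omega|\,|\mu(m^{(k)})|$ and the inductive bounds on the two factors, then collect powers of $|\omega|$ via $\mathfrak{l}(\gamma)=1+\mathfrak{l}(\gamma^{(k-1)}_1)+\mathfrak{l}(\gamma^{(k-1)}_2)$ and the $\mathfrak{P}$-factors via the recursion $\mathfrak{P}(m^{(k)})=|\mu(m^{(k)})|\,\mathfrak{P}(m^{(k-1)}_1)\mathfrak{P}(m^{(k-1)}_2)$ from \eqref{eq:12FourierB2a}, arriving at $|\mathfrak{f}(m^{(k)})|\le|\omega|^{\mathfrak{l}(\gamma)}\mathfrak{P}(m^{(k)})$.

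I do not expect a real obstacle: the lemma is essentially a verification that the definitions of $\mathfrak{l}$, $\mathfrak{F}$ and $\mathfrak{P}$ were made correctly, the two inductions being matched respectively by the identity $\int_0^t\tau^{a+b}\,d\tau=t^{a+b+1}/(a+b+1)$ and by the multiplicativity of the recursion for $\mathfrak{f}$. The only genuinely necessary care is in the case analysis over the three node types $\gamma=0$, $\gamma=1$, $\gamma=(\gamma^{(k-1)}_1,\gamma^{(k-1)}_2)$, and in tracking the numerical factor $\tfrac12$ (equivalently, the slack factor $\lambda$); both are disposed of by the conventions fixed at the start, after which all cases are covered by the single uniform computation above.
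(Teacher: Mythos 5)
Your proposal is correct and follows essentially the same route as the paper: induction over the recursive tree structure of $\Gamma^{(k)}$, using $\int_0^t\tau^{a+b}\,d\tau=t^{a+b+1}/(a+b+1)$ together with the defining recursions for $\mathfrak{l},\mathfrak{F}$ in part (1), and taking moduli in the recursion for $\mathfrak{f}$ together with the recursion for $\mathfrak{P}$ in part (2). Your side remarks (the spurious $\tau$ in the definition of $\mathfrak{f}$, and reading the constant $\lambda$ in the $\gamma=1$ bound as $|\omega|$) are consistent with how the paper's own proof actually argues.
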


\begin{proof}

$(1)$ Using the definitions \eqref{eq:c0def}--\eqref{eq:12FourierB2} and induction, one obtains
\begin{align*}
\big| I(t,m^{(1)}) \big| & \le 1 \quad \text{if $\gamma \in \Gamma^{(1)}$, $\gamma = 0$, $m^{(1)} \in \mathfrak{M}^{(1,\gamma)}$}, \\
\big| I(t,m^{(1)}) \big| & \le t \quad \text{if $\gamma \in \Gamma^{(1)}$, $\gamma = 1$, $m^{(1)} \in \mathfrak{M}^{(1,\gamma)}$}, \\
\big| I(t,m^{(k)}) \big| & \le 1 \quad \text{if $\gamma = 0 \in \Gamma^{(k)}$, $\gamma = 0$, $m^{(k)} \in \mathfrak{M}^{(k,\gamma)}$, $k \ge 2$}, \\
\big| I(t,m^{(k)}) \big| & \le \int_0^t \big| I (\tau,m^{(k-1)}_1) \big| \big| I(\tau,m^{(k-1)}_2) \big| \, d \tau \\
& \le \int_0^t \frac{\tau^{\mathfrak{l}(\gamma^{(k-1)}_1)}}{\mathfrak{F}(\gamma^{(k-1)}_1)} \frac{\tau^{\mathfrak{l}(\gamma^{(k-1)}_2)}}{\mathfrak{F}(\gamma^{(k-1)}_2)} \, d \tau = \int_0^t \frac{\tau^{\mathfrak{l}(\gamma^{(k-1)}_1) + \mathfrak{l}(\gamma^{(k-1)}_2)}}{\mathfrak{F}(\gamma^{(k-1)}_1) \mathfrak{F}(\gamma^{(k-1)}_2)} \, d \tau \\
& = \frac{t^{\mathfrak{l}(\gamma^{(k-1)}_1) + \mathfrak{l}(\gamma^{(k-1)}_2)+1}}{(\mathfrak{l}(\gamma^{(k-1)}_1) + \mathfrak{l}(\gamma^{(k-1)}_2)+1) \mathfrak{F}(\gamma^{(k-1)}_1) \mathfrak{F}(\gamma^{(k-1)}_2)} \\
& = \frac{t^{\mathfrak{l}(\gamma)}}{\mathfrak{F}(\gamma)} \qquad \text{if $\gamma \in \Gamma^{(k)}$, $k \ge 2$, $\gamma = (\gamma^{(k-1)}_1, \gamma^{(k-1)}_2) \in \Gamma^{(k-1)} \times \Gamma^{(k-1)}$}, \\
\end{align*}
as claimed.

$(2)$ Similarly,
\begin{align*}
|\mathfrak{f}(m^{(1)})| & = 1 \quad \text{if $\gamma \in \Gamma^{(1)}$, $\gamma = 0$, $m^{(1)} \in \mathfrak{M}^{(1,\gamma)}$}, \\
|\mathfrak{f}(m^{(1)})| & \le |\omega| |\mu(m^{(1)})| \quad \text{if $\gamma \in \Gamma^{(1)}$, $\gamma = 1$, $m^{(1)} \in \mathfrak{M}^{(1,\gamma)}$}, \\
\mathfrak{f}(m^{(k)}) & = 1 \quad \text{if $\gamma = 0 \in \Gamma^{(k)}$, $k \ge 2$, $m^{(k)} \in \mathfrak{M}^{(k,\gamma)}$}, \\
|\mathfrak{f}(m^{(k)})| & \le |\omega| |\mu(m^{(k)})| |\mathfrak{f}(m^{(k-1)}_1)| |\mathfrak{f}(m^{(k-1)}_2)| \\
& \le |\omega| |\mu(m^{(k)})| |\omega|^{\mathfrak{l}(\gamma^{(k-1)}_1)} \mathfrak{P}(m^{(k-1)}_1) |\omega|^{\mathfrak{l}(\gamma^{(k-1)}_2)} \mathfrak{P}(m^{(k-1)}_2) = |\omega|^{\mathfrak{l}(\gamma)} \mathfrak{P}(m^{(k)}) \\
& \qquad \text{if $\gamma \in \Gamma^{(k)}$, $k \ge 2$, $\gamma = (\gamma^{(k-1)}_1, \gamma^{(k-1)}_2) \in \Gamma^{(k-1)} \times \Gamma^{(k-1)}$,} \\
& \qquad \text{$m^{(k)} = \{m^{(k-1)}_1, m^{(k-1)}_2\} \in \mathfrak{M}^{(k-1,\gamma^{(k-1)}_1)} \times \mathfrak{M}^{(k-1,\gamma^{(k-1)}_2)}$},
\end{align*}
as claimed.
\end{proof}

It is helpful to note here that when we apply the last lemma, the estimates in cases with $k\ge 2$ and $\gamma\neq 0$ subsume
the rest of the cases in the sense that the formal evaluation with $\mathfrak{l}(\gamma),\mathfrak{F}(\gamma), \mathfrak{P}(m^{(k)}) $, taken from \eqref{eq:12Bernoulli12}, \eqref{eq:12Bernoulli12b}, \eqref{eq:12FourierB2a} gives exactly the same result as for the rest of the cases.

\begin{remark}\label{rem.cont234}
Our next goal is to build up an estimation of the sums involving the functions $\mathfrak{f}^{(k,\gamma)}, (m^{(k)}) I^{(k,\gamma)}(t,m^{(k)})$. The main difficulty comes from the complicated combinatorics of the summation process.
As a matter of fact, the right estimate for the summation hinges on certain combinatorial identities, see the identity in part $(1)$ of Lemma~\ref{lem:elementarycalculus} and identity \eqref{eq:Pexpansionsumtatementest1aNNN} in the proof of part $(1)$
of Lemma~\ref{lem:PexpansionsumestA}. To employ effectively the identities in question, we ``change variables'' in the summations. Moreover, we do it twice. To prove the first identity we introduce certain isomorphisms of the lattices $\mathfrak{M}^{(k,\gamma)}$, $\mathfrak{B}^{(k,\gamma)}$ onto the standard lattice $\prod_{j=1}^{\mathfrak{d}(\gamma)} \mathbb{Z}^\nu$. To prove the second identity we introduce certain mappings of simplices of the standard lattice $\prod_{j=1}^{\mathfrak{d}(\gamma)} \mathbb{Z}^\nu$, see Remark~\ref{rem.cont345}.
\end{remark}

\begin{defi}\label{def:aux1}
$1$. We define inductively the isomorphism $\phi^{(k)}_\gamma : \mathfrak{M}^{(k,\gamma)} \rightarrow \prod_{j=1}^{\mathfrak{d}(\gamma)} \mathbb{Z}^\nu$ by setting
$$
\phi^{(k)}_\gamma(m^{(k)}) = \begin{cases} m & \text{if $\gamma = 0 \in \Gamma^{(k)}$ $m^{(k)} = m \in \mathbb{Z}^\nu = \mathfrak{M}^{(k,\gamma)}$}, \\ (m_1,m_2) & \text{if $k=1$, $\gamma = 1 \in \Gamma^{(1)}$, $m^{(1)} = (m_1,m_2) \in \mathfrak{M}^{(1,\gamma)}$}, \\ (\phi^{(k-1)}_{\gamma^{(k-1)}_1}(m^{(k-1)}_1), \phi^{(k-1)}_{\gamma^{(k-1)}_2})(m^{(k-1)}_2) & \text{if $k \ge 2$, $m^{(k)} \in \mathfrak{M}^{(k,\gamma)}$,} \\ & \text{ $\gamma = (\gamma^{(k-1)}_1, \gamma^{(k-1)}_2) \in \Gamma^{(k-1)} \times \Gamma^{(k-1)}$}, \\
& \text{$m^{(k)} = (m^{(k-1)}_1,m^{(k-1)}_2) \in \mathfrak{M}^{(k-1,\gamma^{(k-1)}_1)} \times \mathfrak{M}^{(k-1,\gamma^{(k-1)}_2)}$}.
\end{cases}
$$
We also define inductively the isomorphism $\chi^{(k)}_\gamma : \mathfrak{B}^{(k,\gamma)} \rightarrow \prod_{j=1}^{\mathfrak{d}(\gamma)} \mathbb{Z}$ by setting
$$
\chi^{(k)}_\gamma(\alpha^{(k)}) = \begin{cases} \alpha & \text{if $\gamma = 0\in \Gamma^{(k)}$ $\alpha^{(k)} = \alpha \in \mathbb{Z} = \mathfrak{B}^{(k,\gamma)}$}, \\ (\alpha_1, \alpha_2) & \text{if $k=1$, $\gamma = 1 \in \Gamma^{(1)}$, $\alpha^{(k)} = (\alpha_1, \alpha_2) \in \mathfrak{B}^{(1,\gamma)}$}, \\ (\chi^{(k-1)}_{\gamma^{(k-1)}_1} (\alpha^{(k-1)}_1), \chi^{(k-1)}_{\gamma^{(k-1)}_2} (\alpha^{(k-1)}_2)) & \text{if $\alpha^{(k)} \in \mathfrak{B}^{(k,\gamma)}$, \quad $k \ge 2$}, \\ & \text{ $\gamma = (\gamma^{(k-1)}_1,\gamma^{(k-1)}_2) \in \Gamma^{(k-1)} \times \Gamma^{(k-1)}$}, \\ & \text{$\alpha^{(k)} = (\alpha^{(k-1)}_1,\alpha^{(k-1)}_2) \in \mathfrak{B}^{(k-1,\gamma^{(k-1)}_1)} \times \mathfrak{B}^{(k-1,\gamma^{(k-1)}_2)}$}. \end{cases}
$$

These isomorphisms induce an ordering of the components of the corresponding vector. For that matter, given $1 \le i \le \mathfrak{d}(\gamma)$, we denote by $(m^{(k)})_i$ the $i$-th component of $\phi^{(k)}_\gamma(m^{(k)}) \in \prod_{j=1}^{\mathfrak{d}(\gamma)} \mathbb{Z}^\nu$. Note that for $m^{(k)} = (m^{(k-1)}_1,m^{(k-1)}_2) \in \mathfrak{M}^{(k-1,\gamma^{(k-1)}_1)} \times \mathfrak{M}^{(k-1,\gamma^{(k-1)}_2)}$, this ordering is lexicographical, that is, $(m^{(k)})_i = (m^{(k-1)}_1)_i$ if $1 \le i \le \mathfrak{d}(\gamma^{(k-1)}_1)$, $(m^{(k)})_{i+\mathfrak{d}(\gamma^{(k-1)}_1)} = (m^{(k-1)}_2)_i$ for $1 \le i \le \mathfrak{d}(\gamma^{(k-1)}_2)$.

$2$. This ordering helps us also to introduce the following sets,
\begin{equation}\label{eq:12Bsetsa}
\mathbb{I}^{(k,\gamma)} = \{ \alpha \in \mathbb{Z}^{\mathfrak{d}(\gamma)} : \sum_j \alpha_j = 1, \, \alpha_j \ge 0 \}
\end{equation}
and
\begin{equation}\label{eq:12Bsets}
\mathbb{A}^{(k,\gamma)} = \begin{cases} \{0 \in \mathbb{Z}\} & \text{if $\gamma = 0 \in \Gamma^{(k)}$}, \\ \{ (\alpha_1, \alpha_2) \in \mathbb{Z}^2 : \alpha_1 + \alpha_2 = 1, \, \alpha_j \ge 0 \} & \text{if $\gamma \in \Gamma^{(1)}$, $\gamma = 1$}, \\ \mathbb{A}^{(k-1,\gamma^{(k-1)}_1)} \times \mathbb{A}^{(k-1,\gamma^{(k-1)}_2)} + \mathbb{I}^{(k,\gamma)} & \text{if $\gamma \in \Gamma^{(k)}$, $k\ge 2$,} \\
& \text{ $\gamma = (\gamma^{(k-1)}_1, \gamma^{(k-1)}_2) \in \Gamma^{(k-1)} \times \Gamma^{(k-1)}$}. \end{cases}
\end{equation}
\end{defi}

Our next goal is to evaluate the functions $\mathfrak{P}(m^{(k)})$ in terms of the ``new variables'' $\alpha_j$.

\begin{lemma}\label{lem:Pexpansion}
Let $\gamma \in \Gamma^{(k)}$, $m^{(k)} \in \mathfrak{M}^{(k,\gamma)}$. Then,
\begin{equation}\label{eq:Pexpansionstatement}
\mathfrak{P}(m^{(k)}) \le \sum_{\alpha = (\alpha_{i})_{1 \le i \le \mathfrak{d}(\gamma)} \in \mathbb{A}^{(k,\gamma)}} \prod|(m^{(k)})_{i}|^{\alpha_{i}}.
\end{equation}
\end{lemma}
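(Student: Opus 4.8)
\emph{Approach.} The plan is to prove \eqref{eq:Pexpansionstatement} by induction on $k$, mirroring the recursive structure of $\Gamma^{(k)}$, $\mathfrak{M}^{(k,\gamma)}$, $\mathfrak{P}$ (see \eqref{eq:12FourierB2a}), and $\mathbb{A}^{(k,\gamma)}$ (see \eqref{eq:12Bsets}): at each level there are the three cases $\gamma=0$, the level-one atom $\gamma=1$, and a genuine pair $\gamma=(\gamma^{(k-1)}_1,\gamma^{(k-1)}_2)$. Two elementary facts do the work. First, by \eqref{eq:12Bsetsa} the set $\mathbb{I}^{(k,\gamma)}$ is exactly the collection of $\mathfrak{d}(\gamma)$ standard basis vectors of $\mathbb{Z}^{\mathfrak{d}(\gamma)}$, so that $\sum_{\alpha\in\mathbb{I}^{(k,\gamma)}}\prod_i|(m^{(k)})_i|^{\alpha_i}=\sum_{i=1}^{\mathfrak{d}(\gamma)}|(m^{(k)})_i|$. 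Second, since $\phi^{(k)}_\gamma$ lists the leaves of $m^{(k)}$ as the lexicographic concatenation of those of $m^{(k-1)}_1$ and $m^{(k-1)}_2$, one has $\mu(m^{(k)})=\mu(m^{(k-1)}_1)+\mu(m^{(k-1)}_2)$ and hence $|\mu(m^{(k)})|\le\sum_{i=1}^{\mathfrak{d}(\gamma)}|(m^{(k)})_i|$ by the triangle inequality; moreover a monomial in the leaves of $m^{(k-1)}_1$ times one in the leaves of $m^{(k-1)}_2$ is a single monomial in the leaves of $m^{(k)}$, whose exponent vector is the concatenation of the two exponent vectors.

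\emph{Base cases and inductive step.} For $\gamma=0\in\Gamma^{(k)}$ (any $k$) one has $\mathfrak{d}(\gamma)=1$, $\mathfrak{P}(m^{(k)})=1$, $\mathbb{A}^{(k,\gamma)}=\{0\}$, so both sides of \eqref{eq:Pexpansionstatement} equal $1$; for $k=1$, $\gamma=1$ one has $\mathfrak{P}(m^{(1)})=|m_1+m_2|$ and right-hand side $|m_1|+|m_2|$, so the inequality is the triangle inequality. For the step, take $k\ge2$, $\gamma=(\gamma^{(k-1)}_1,\gamma^{(k-1)}_2)$, $m^{(k)}=(m^{(k-1)}_1,m^{(k-1)}_2)$; by \eqref{eq:12FourierB2a},
\[
\mathfrak{P}(m^{(k)})=|\mu(m^{(k)})|\,\mathfrak{P}(m^{(k-1)}_1)\,\mathfrak{P}(m^{(k-1)}_2).
\]
Bounding $|\mu(m^{(k)})|$ by $\sum_{\alpha\in\mathbb{I}^{(k,\gamma)}}\prod_i|(m^{(k)})_i|^{\alpha_i}$ and each $\mathfrak{P}(m^{(k-1)}_j)$ by the induction hypothesis, then multiplying the three non-negative sums and merging leaves via the lexicographic identification, I would arrive at
\[
\mathfrak{P}(m^{(k)})\le\sum_{\alpha\in\mathbb{I}^{(k,\gamma)}}\ \sum_{\beta\in\mathbb{A}^{(k-1,\gamma^{(k-1)}_1)}\times\mathbb{A}^{(k-1,\gamma^{(k-1)}_2)}}\ \prod_{i=1}^{\mathfrak{d}(\gamma)}|(m^{(k)})_i|^{(\alpha+\beta)_i}.
\]
By \eqref{eq:12Bsets} every index $\alpha+\beta$ appearing here lies in $\mathbb{A}^{(k,\gamma)}$, which is \eqref{eq:Pexpansionstatement}.

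\emph{Main obstacle.} The delicate point — and the only one — is that the sum $\sum_{\alpha\in\mathbb{A}^{(k,\gamma)}}$ in \eqref{eq:Pexpansionstatement} must be understood over the multiset produced by the recursion, i.e. the sumset in \eqref{eq:12Bsets} is to be taken with multiplicities: distinct pairs $(\alpha,\beta)$ can give the same vector $\alpha+\beta$, and all the resulting copies of a monomial are genuinely needed. (With an honest set the bound already fails for $k=2$, $\gamma=(1,1)$, when the four leaves of $m^{(k)}$ coincide with a common unit vector: then $\mathfrak{P}(m^{(k)})=16$ whereas the set-sum equals $12$.) Once this convention is in place, nothing beyond the triangle inequality and the bookkeeping of the parallel recursions for $\mathfrak{M}$, $\mathfrak{P}$, $\mathbb{I}$, and $\mathbb{A}$ is required.
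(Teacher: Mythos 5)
Your proof is correct and is essentially the paper's own argument: induction on $k$ following the parallel recursions, with the triangle inequality applied to $|\mu(m^{(k)})|$ in the cases $k=1$, $\gamma=1$ and in the inductive step, and the inductive hypothesis applied to the two factors $\mathfrak{P}(m^{(k-1)}_j)$ before identifying the expanded triple sum with the sum over $\mathbb{A}^{(k,\gamma)}$. Your observation that this last identification requires reading the sum over $\mathbb{A}^{(k,\gamma)}$ with the multiplicities coming from the sumset recursion (rather than over the underlying set) is a correct and worthwhile clarification of what the paper does implicitly, and your $k=2$, $\gamma=(1,1)$ example with four equal unit-vector leaves ($16$ versus $12$) does verify that the convention is needed.
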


\begin{proof}
The statement obviously holds for $\gamma = 0 \in \Gamma^{(k)}$ since both sides in \eqref{eq:Pexpansionstatement} are equal to $1$ in this case. Let $k = 1$, $\gamma = 1$, $m^{(1)} = (m_1,m_2)$. Then the right-hand side in \eqref{eq:Pexpansionstatement} is equal $|m_1| + |m_2|$. By definition, $\mathfrak{P}(m^{(1)}) = |m_1 + m_2| \le |m_1|+|m_2|$. In particular, \eqref{eq:Pexpansionstatement} holds for $k = 1$.

Let $k \ge 2$. Assume that the statement holds for any $\gamma' \in \Gamma^{(k')}$ with $k' < k$. Let $\gamma = (\gamma^{(k-1)}_1, \gamma^{(k-1)}_2) \in \Gamma^{(k-1)} \times \Gamma^{(k-1)}$, $m^{(k)} \in \mathfrak{M}^{(k,\gamma)}$, $m^{(k)} = (m^{(k-1)}_1,m^{(k-1)}_2) \in \mathfrak{M}^{(k-1,\gamma^{(k-1)}_1)} \times \mathfrak{M}^{(k-1,\gamma^{(k-1)}_2)}$. Using this assumption, the inductive definition of $\mathfrak{P}(m^{(1)})$ in \eqref{eq:12FourierB2a} and
$\mathbb{A}^{(k,\gamma)}$ in \eqref{eq:12Bsets}, one obtains
\begin{align*}
\mathfrak{P}(m^{(k)}) & = \Big[\big| \sum_{j} \sum_{i=1}^{\mathfrak{d}(\gamma^{(k-1)}_{j})} (m^{(k-1)}_j)_{i} \big|\Big] \prod_j \mathfrak{P}(m^{(k-1)}_j) \\
& \le \Big[\sum_{j} \sum_{i=1}^{\mathfrak{d} (\gamma^{(k-1)}_j)} |(m^{(k-1)}_{j})_{i}|\Big] \prod_j \quad \quad \sum_{\alpha_j = (\alpha_{i,j})_{1 \le i \le \mathfrak{d}(\gamma^{(k-1)}_j)} \in \mathbb{A}^{(k-1,\gamma^{(k-1)}_j)}} \prod|(m^{(k-1)}_{j})_{i}|^{\alpha_{i,j}} \\
& = \sum_{\alpha = (\alpha_{i})_{1 \le i \le \mathfrak{d}(\gamma)} \in \mathbb{A}^{(k,\gamma)}} \prod|(m^{(k)})_{i}|^{\alpha_{i}},
\end{align*}
as claimed.
\end{proof}

We need the following elementary calculus lemma.

\begin{lemma}\label{lem:elementarycalculus}
$(1)$ For any $0 < \kappa \le 1$, we have
\[
\sum_{m\in\mathbb{Z}}\exp(-\kappa|m|)\le 6\kappa^{-1}.
\]

$(2)$ For every $\alpha = (\alpha_1,\dots,\alpha_r)$ with $\alpha_j \in \mathbb{Z}$, $\alpha_j \ge 0$, we have
$$
\sum_{m=(m_1,\dots,m_r):m_j \in \mathbb{Z}^{\nu}} \prod_j |m_j|^{\alpha_j} \exp(-\kappa |m_j|) \le \alpha! (C_0 \kappa^{-1})^{|\alpha|+r\nu },
$$
where $C_0$ is an absolute constant. Here,
\[
\alpha!:=\prod_j \alpha_j!,
\]
and $|n|$, as always, means the $\ell^1$-norm.

$(3)$ For every $\alpha = (\alpha_1,\dots,\alpha_r)$ with $\alpha_j \in \mathbb{Z}$, $\alpha_j \ge 0$ and $n\in \mathbb{Z}$, $n\ge 0$, we have
$$
\sum_{m=(m_1,\dots,m_r):m_j \in \mathbb{Z}^{\nu},\quad |m|\ge n} \prod_j |m_j|^{\alpha_j} \exp(-\kappa |m_j|) \le \alpha! (2C_0 \kappa^{-1})^{|\alpha|+r\nu } \exp (-\frac{\kappa}{2}n ).
$$

\end{lemma}
\begin{proof}
$(1)$ We have
\[
\sum_{m\in\mathbb{Z}}\exp(-\kappa|m|)=\frac{1+e^{-\kappa}}{1-e^{-\kappa}}\le \frac{2}{1-e^{-\kappa}}.
\]
Since $1-e^{-\kappa}\ge \frac{\kappa}{3}$ for $0<\kappa\le 1$, the estimate in $(1)$ holds.

$(2)$ Note that for any $\lambda\ge 0$ and any integer $a\ge 0$, we have
\[
\lambda^a=a!(2\kappa^{-1})^a\big[\frac{\Big(\frac{\kappa\lambda}{2}\Big)^a}{a!}\big]\le a!(2\kappa^{-1})^a\exp(\frac{\kappa\lambda}{2}).
\]
Therefore,
\begin{align*}
\sum_{m=(m_1,\dots,m_r):m_j \in \mathbb{Z}^{\nu}} \prod_j |m_j|^{\alpha_j} \exp(-\kappa |m_j|) & \le \sum_{m=(m_1,\dots,m_r):m_j \in \mathbb{Z}^{\nu}} \prod_j \alpha_j! (2\kappa^{-1})^{\alpha_j} \exp(-\frac{\kappa}{2} |m_j|) \\
& \le \alpha! (2\kappa^{-1})^{|\alpha|}\sum_{m=(m_1,\dots,m_r):m_j \in \mathbb{Z}^{\nu}} \prod_j  \exp(-\frac{\kappa}{2} |m_j|) \\
& = \alpha! (2\kappa^{-1})^{|\alpha|}\Big[\sum_{m \in \mathbb{Z}^{\nu}}\exp(-\frac{\kappa}{2} |m|)\Big]^r \\
& = \alpha! (2\kappa^{-1})^{|\alpha|}\Big[\sum_{m \in \mathbb{Z}}\exp(-\frac{\kappa}{2} |m|)\Big]^{r\nu}.
\end{align*}
Applying the estimate in $(1)$ one obtains the statement with $C_0=12$.

$(3)$ One has
\begin{align*}
\sum_{m=(m_1,\dots,m_r):m_j \in \mathbb{Z}^{\nu},\quad |m|=n} & \prod_j |m_j|^{\alpha_j} \exp(-\kappa |m_j|) \\
& = \sum_{m=(m_1,\dots,m_r):m_j \in \mathbb{Z}^{\nu},\quad |m|=n} \big[\prod_j\exp(-\frac{\kappa}{2} |m_j|)\big] \big[\prod_j |m_j|^{\alpha_j} \exp(-\frac{\kappa}{2} |m_j|)\big] \\
& \le \exp(-\frac{\kappa}{2}n)\sum_{m=(m_1,\dots,m_r):m_j \in \mathbb{Z}^{\nu}} \big[\prod_j |m_j|^{\alpha_j} \exp(-\frac{\kappa}{2} |m_j|)\big].
\end{align*}
Applying the estimate in $(2)$ one obtains the statement in $(3)$.
\end{proof}

Now we are ready to derive the central ingredients of our estimation method. Namely, in the next lemma, we first of all derive the crucial combinatorial identity in terms of the new variables $\alpha_j$ (which has nothing to do with the previous two lemmas). After that we obtain the first application of the identity by combining this it with the estimates from Lemmas~\ref{lem:Pexpansion} and \ref{lem:elementarycalculus} to evaluate the sums involving the functions
$\mathfrak{P}(m^{(k)})$.

\begin{lemma}\label{lem:PexpansionsumAA}
Let $\gamma \in \Gamma^{(k)}$.

$1$. For $\alpha \in \mathbb{A}^{(k,\gamma)}$, we have
$$
\sum_{i=1}^{\mathfrak{d}(\gamma)} \alpha_i = \mathfrak{l}(\gamma).
$$

\smallskip
$2$.
\begin{equation}\label{eq:Pexpansionsumtatement}
\sum_{m^{(k)} \in \mathfrak{M}^{(k,\gamma)}} \exp(-\kappa|m^{(k)}|) \mathfrak{P}(m^{(k)}) \le (C_0^\nu \kappa^{-\nu})^{(\mathfrak{l} (\gamma)+1)} \sum_{\alpha = (\alpha_{i})_{1 \le i \le \mathfrak{d} (\gamma)} \in \mathbb{Z}_+^{\mathfrak{d}(\gamma)} : |\alpha| = \mathfrak{l}(\gamma)} \prod \alpha_{i}! \; .
\end{equation}

\smallskip
$3$.
\begin{equation}\label{eq:PexpansionsumtatementN}
\sum_{m^{(k)} \in \mathfrak{M}^{(k,\gamma)} : \mu(m^{(k)}) = n} \exp(-\kappa |m^{(k)}|) \mathfrak{P}(m^{(k)}) \le (2^\nu C_0^\nu \kappa^{-\nu})^{(\mathfrak{l}(\gamma)+1)} \exp \Big( -\frac{\kappa |n|}{2} \Big) \sum_{\alpha = (\alpha_{i})_{1 \le i \le \mathfrak{d}(\gamma)} \in \mathbb{Z}_+^{\mathfrak{d}(\gamma)} : |\alpha| = \mathfrak{l}(\gamma)} \prod \alpha_{i} \; .
\end{equation}
\end{lemma}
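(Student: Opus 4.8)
\emph{Structure of the argument.} I would treat the three parts in order, since part (2) rests on part (1), and part (3) follows from part (2) by a short comparison of norms.

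\emph{Part (1).} The plan is to induct on the recursion defining $\Gamma^{(k)}$ and $\mathbb{A}^{(k,\gamma)}$. For $\gamma = 0 \in \Gamma^{(k)}$ one has $\mathbb{A}^{(k,\gamma)} = \{0\}$ and $\mathfrak{l}(\gamma) = 0$, so the identity is trivial; for $\gamma = 1 \in \Gamma^{(1)}$ every $\alpha \in \mathbb{A}^{(1,1)}$ satisfies $\alpha_1 + \alpha_2 = 1 = \mathfrak{l}(1)$ by definition. For $k \ge 2$ and $\gamma = (\gamma^{(k-1)}_1,\gamma^{(k-1)}_2)$, an element of $\mathbb{A}^{(k,\gamma)}$ is, by \eqref{eq:12Bsets}, a sum $\beta + \iota$ with $\beta \in \mathbb{A}^{(k-1,\gamma^{(k-1)}_1)} \times \mathbb{A}^{(k-1,\gamma^{(k-1)}_2)}$ and $\iota \in \mathbb{I}^{(k,\gamma)}$; since $\sum_j \iota_j = 1$, the inductive hypothesis gives coordinate sum $\mathfrak{l}(\gamma^{(k-1)}_1) + \mathfrak{l}(\gamma^{(k-1)}_2) + 1 = \mathfrak{l}(\gamma)$. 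The same induction shows every coordinate of every $\alpha \in \mathbb{A}^{(k,\gamma)}$ is nonnegative, so $\mathbb{A}^{(k,\gamma)} \subseteq \{\alpha \in \mathbb{Z}_+^{\mathfrak{d}(\gamma)} : |\alpha| = \mathfrak{l}(\gamma)\}$; this containment is precisely what lets one pass from a sum over $\mathbb{A}^{(k,\gamma)}$ to the full simplex in part (2).

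\emph{Part (2).} First I would use Lemma~\ref{lem:Pexpansion} to replace $\mathfrak{P}(m^{(k)})$ by $\sum_{\alpha \in \mathbb{A}^{(k,\gamma)}} \prod_i |(m^{(k)})_i|^{\alpha_i}$. Next, the definition of $|m^{(k)}|$ gives, by induction, $|m^{(k)}| = \sum_{i=1}^{\mathfrak{d}(\gamma)} |(m^{(k)})_i|$, hence $\exp(-\kappa |m^{(k)}|) = \prod_i \exp(-\kappa|(m^{(k)})_i|)$. Using the isomorphism $\phi^{(k)}_\gamma : \mathfrak{M}^{(k,\gamma)} \to \prod_{i=1}^{\mathfrak{d}(\gamma)} \mathbb{Z}^\nu$ to view $m^{(k)}$ as a tuple $(m_1,\dots,m_{\mathfrak{d}(\gamma)})$, interchanging the finite sum over $\alpha$ with the sum over $m^{(k)}$, and applying Fubini, the left-hand side of \eqref{eq:Pexpansionsumtatement} becomes $\sum_{\alpha \in \mathbb{A}^{(k,\gamma)}} \prod_{i=1}^{\mathfrak{d}(\gamma)} \big( \sum_{m_i \in \mathbb{Z}^\nu} |m_i|^{\alpha_i} \exp(-\kappa|m_i|) \big)$. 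I would then bound each single-block factor by expanding $|m_i|^{\alpha_i}$ with the multinomial theorem and applying Lemma~\ref{lem:elementarycalculus} to each resulting monomial; since $\binom{\alpha_i}{\beta}\beta! = \alpha_i!$, this produces a bound of the shape $\alpha_i! (C_0\kappa^{-1})^{c\nu(\alpha_i+1)}$ once the ($\alpha_i$-controlled, dimension-dependent) count of monomials and the binomial coefficients are absorbed into $C_0$. Multiplying over $i$, invoking part (1) for $\sum_i \alpha_i = \mathfrak{l}(\gamma)$ together with the elementary identity $\mathfrak{d}(\gamma) = \mathfrak{l}(\gamma) + 1$ (an immediate induction from the definitions of $\mathfrak{l}$ and $\mathfrak{d}$), and finally enlarging the sum over $\mathbb{A}^{(k,\gamma)}$ to the sum over all $\alpha \in \mathbb{Z}_+^{\mathfrak{d}(\gamma)}$ with $|\alpha| = \mathfrak{l}(\gamma)$, yields \eqref{eq:Pexpansionsumtatement} with $C_0$ suitably chosen (and, if needed, $\kappa$ taken $\le 1$ to bundle the various powers of $\kappa^{-1}$ into a single power of $C_0^\nu\kappa^{-\nu}$).

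\emph{Part (3) and the main obstacle.} For part (3) the key point is that $\mu(m^{(k)})$ is the sum of all $\mathfrak{d}(\gamma)$ of the $\mathbb{Z}^\nu$-blocks of $m^{(k)}$, so $|\mu(m^{(k)})| \le \sum_i |(m^{(k)})_i| = |m^{(k)}|$; thus on $\{m^{(k)} : \mu(m^{(k)}) = n\}$ one has $\exp(-\kappa|m^{(k)}|) \le \exp(-\tfrac{\kappa}{2}|n|)\exp(-\tfrac{\kappa}{2}|m^{(k)}|)$. Pulling out $\exp(-\tfrac{\kappa}{2}|n|)$, dropping the constraint $\mu(m^{(k)}) = n$, and applying part (2) with $\kappa/2$ in place of $\kappa$ produces the factor $(C_0^\nu(\kappa/2)^{-\nu})^{\mathfrak{l}(\gamma)} = (2^\nu C_0^\nu\kappa^{-\nu})^{\mathfrak{l}(\gamma)}$, which is exactly \eqref{eq:PexpansionsumtatementN}. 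Nothing here is deep; the work is entirely bookkeeping, and I expect the only delicate step to be the constant tracking in part (2) — making the multinomial expansion, the monomial count, and the $\nu$-dependence fit cleanly inside the stated power of $C_0^\nu\kappa^{-\nu}$, which is where the identity $\mathfrak{d}(\gamma) = \mathfrak{l}(\gamma)+1$ and the constraint $|\alpha| = \mathfrak{l}(\gamma)$ from part (1) do the real work.
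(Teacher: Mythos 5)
Your overall route coincides with the paper's: part (1) by the same induction on the recursive definitions of $\mathbb{A}^{(k,\gamma)}$ and $\mathfrak{l}$; part (3) by exactly the paper's splitting $\exp(-\kappa|m^{(k)}|)\le\exp(-\frac{\kappa|n|}{2})\exp(-\frac{\kappa|m^{(k)}|}{2})$ (valid since $|\mu(m^{(k)})|\le|m^{(k)}|$) followed by part (2) with $\kappa/2$; and part (2) by combining Lemma~\ref{lem:Pexpansion}, Lemma~\ref{lem:elementarycalculus} and part (1) --- which is all the paper says for parts (2) and (3). Your block decomposition via $\phi^{(k)}_\gamma$, the factorization $\exp(-\kappa|m^{(k)}|)=\prod_i\exp(-\kappa|(m^{(k)})_i|)$, and the enlargement of $\mathbb{A}^{(k,\gamma)}$ to $\{\alpha\in\mathbb{Z}_+^{\mathfrak{d}(\gamma)}:|\alpha|=\mathfrak{l}(\gamma)\}$ are precisely the details the paper leaves implicit.

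The one step that does not close as you wrote it is the final constant accounting in part (2). You bound the $i$-th block by $\alpha_i!\,(C_0\kappa^{-1})^{c\nu(\alpha_i+1)}$; multiplying over the $\mathfrak{d}(\gamma)=\mathfrak{l}(\gamma)+1$ blocks gives exponent $c\nu(2\mathfrak{l}(\gamma)+1)$, which overshoots the stated $\nu\,\mathfrak{l}(\gamma)$, and the proposed remedy of ``taking $\kappa\le1$'' goes the wrong way: for $\kappa\le1$ the surplus powers of $\kappa^{-1}$ are $\ge1$ and cannot be absorbed into an absolute constant (with the $+1$ per block the target is already unreachable for $\gamma=0$, where the left side is $\sum_{m\in\mathbb{Z}^\nu}e^{-\kappa|m|}$ and the right side is $1$). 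To land on the stated right-hand side you must use Lemma~\ref{lem:elementarycalculus} exactly as stated, with exponent $|\alpha|\nu$ and no $+1$: for a block with $\alpha_i\ge1$, expand $|m_i|^{\alpha_i}$ multinomially and absorb the number of monomials, $\binom{\alpha_i+\nu-1}{\nu-1}\le 4^{\alpha_i\nu}$, together with $\binom{\alpha_i}{\beta}\beta!\le\alpha_i!$, into an enlarged absolute constant $C_0$ rather than into powers of $\kappa^{-1}$; for a block with $\alpha_i=0$ apply that lemma directly with $\alpha=0$. Then $\sum_i\alpha_i\,\nu=\nu\,\mathfrak{l}(\gamma)$ from part (1) gives exactly $(C_0^\nu\kappa^{-\nu})^{\mathfrak{l}(\gamma)}$, with no need for the identity $\mathfrak{d}(\gamma)=\mathfrak{l}(\gamma)+1$ nor any restriction on $\kappa$. (The residual discrepancy you were implicitly trying to patch --- the $\alpha_i=0$ blocks really do contribute $\sum_{m\in\mathbb{Z}^\nu}e^{-\kappa|m|}\sim\kappa^{-\nu}$ when $\kappa$ is small --- sits inside Lemma~\ref{lem:elementarycalculus} as stated and is inherited by the paper's own one-line proof; it concerns the paper's constants, not your strategy, and downstream it would only shorten the admissible time interval.)
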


\begin{proof}
$1$. The statement holds if $\gamma = 0 \in \Gamma^{(k)}$, since both sides in the identity are equal to $0$ in this case. For $k = 1$, $\gamma = 1$, the statement holds since both sides in the identity are equal to $1$ in this case. So, in particular, the statement holds for $k = 1$.

Let $k \ge 2$. Assume that the statement holds for any $\gamma' \in \Gamma^{(k')}$ with $k' < k$ and any $\alpha' \in \mathbb{A}^{(k',\gamma')}$. Let $\gamma = (\gamma^{(k-1)}_1,\gamma^{(k-1)}_2) \in \Gamma^{(k-1)} \times \Gamma^{(k-1)}$, $\alpha = (\alpha^{(1)}, \alpha^{(2)}) + \beta$, $\alpha^{(j)} \in \mathbb{A}^{(k-1,\gamma^{(k-1)}_j)}$, $\beta \in \mathbb{I}^{(k,\gamma)}$. Using the inductive assumption, one obtains
$$
\sum_{i=1}^{\mathfrak{d}(\gamma)} \alpha_i = \sum_{j=1,2} \sum_{i=1}^{\mathfrak{d} (\gamma^{(k-1)}_j)} \alpha^{(j)}_i + \sum_{i=1}^{\mathfrak{d}(\gamma)} \beta_i = \sum_{j=1,2} \mathfrak{l}(\gamma^{(k-1)}_j) + 1 = \mathfrak{l}(\gamma).
$$

$2$. This follows from Lemma~\ref{lem:Pexpansion} combined with Lemma~\ref{lem:elementarycalculus} and part $1$ of the current lemma.

$3$. Let $\mu(m^{(k)}) = n$. Then $|m^{(k)}|\ge |n|$. Note that for $\mu(m^{(k)}) = n$, one has $\exp(-\kappa |m^{(k)}|) \le \exp(-\frac{\kappa |n|}{2}) \exp(-\frac{\kappa |m^{(k)}|}{2})$. Applying part $2$ of the current lemma, one obtains the estimate.
\end{proof}

The next lemma demonstrates the effectiveness of the new variables $\alpha_j$ and the identity from the previous lemma.
Roughly speaking, it shows that the terms $ \frac{t^{\mathfrak{l}(\gamma)}}{\mathfrak{F}(\gamma)}$ from the estimate of the main function $I(t,m^{(k)})$ behave similarly to the terms of the series expansion for the multi-variable exponent $\exp [t^{\mathfrak{l}(\gamma)}]$. This gives a perfect estimate for the total sum of these terms, provided that $t$ is small enough.

\begin{lemma}\label{lem:PexpansionsumF}
For $0 < t \le 1/8$, we have
\begin{equation}\label{eq:PexpansionsumtatementPFst}
\sum_{\gamma \in \Gamma^{(k)}} \frac{t^{\mathfrak{l}(\gamma)}} {\mathfrak{F}(\gamma)} \sum_{\alpha = (\alpha_{i})_{1 \le i \le \mathfrak{d}(\gamma)}
\in \mathbb{A}^{(k,\gamma)}} \prod \alpha_{i}!\le 2.
\end{equation}
\end{lemma}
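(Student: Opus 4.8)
The plan is to read the left-hand side of \eqref{eq:PexpansionsumtatementPFst} as a single quantity
$$
S_k:=\sum_{\gamma\in\Gamma^{(k)}}w(\gamma),\qquad w(\gamma):=\frac{t^{\mathfrak{l}(\gamma)}}{\mathfrak{F}(\gamma)}\,a(\gamma),\qquad a(\gamma):=\sum_{\alpha\in\mathbb{A}^{(k,\gamma)}}\prod_i\alpha_i!,
$$
and to prove $S_k\le 2$ by induction on $k$, exploiting the self-similar structure $\Gamma^{(k)}=\{0\}\cup\Gamma^{(k-1)}\times\Gamma^{(k-1)}$. The whole argument rests on the product estimate $w((\gamma_1,\gamma_2))\le 2t\,w(\gamma_1)w(\gamma_2)$, which I would establish first.

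The key preliminary observation is the elementary identity $\mathfrak{d}(\gamma)=\mathfrak{l}(\gamma)+1$, valid for every $\gamma\in\Gamma^{(k)}$ and every $k$; it follows by a one-line induction on the tree structure from the defining recursions ($\mathfrak{d}(0)=1=\mathfrak{l}(0)+1$, $\mathfrak{d}(1)=2=\mathfrak{l}(1)+1$, and $\mathfrak{d}((\gamma_1,\gamma_2))=\mathfrak{d}(\gamma_1)+\mathfrak{d}(\gamma_2)=(\mathfrak{l}(\gamma_1)+1)+(\mathfrak{l}(\gamma_2)+1)=\mathfrak{l}((\gamma_1,\gamma_2))+1$). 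Next I would prove $a((\gamma_1,\gamma_2))\le 2\,\mathfrak{l}((\gamma_1,\gamma_2))\,a(\gamma_1)a(\gamma_2)$. Using $\mathbb{A}^{(k,(\gamma_1,\gamma_2))}=\mathbb{A}^{(k-1,\gamma_1)}\times\mathbb{A}^{(k-1,\gamma_2)}+\mathbb{I}^{(k,(\gamma_1,\gamma_2))}$ and the fact that all summands $\prod_i\alpha_i!$ are nonnegative, one may bound the sum over this set by the sum over all representations $(\beta^{(1)},\beta^{(2)},e_i)$ with $\beta^{(j)}\in\mathbb{A}^{(k-1,\gamma_j)}$ and $e_i$ the $i$-th standard basis vector, $1\le i\le\mathfrak{d}((\gamma_1,\gamma_2))$. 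Since adding $e_i$ to a vector $c$ multiplies $\prod_jc_j!$ by $c_i+1$, the inner sum over $i$ produces the factor $\sum_i(c_i+1)=\mathfrak{d}((\gamma_1,\gamma_2))+|\beta^{(1)}|+|\beta^{(2)}|$; by Lemma~\ref{lem:PexpansionsumAA}(1) we have $|\beta^{(j)}|=\mathfrak{l}(\gamma_j)$, so this factor equals $\mathfrak{l}(\gamma_1)+\mathfrak{l}(\gamma_2)+\mathfrak{d}((\gamma_1,\gamma_2))=(\mathfrak{l}((\gamma_1,\gamma_2))-1)+(\mathfrak{l}((\gamma_1,\gamma_2))+1)=2\,\mathfrak{l}((\gamma_1,\gamma_2))$ by the identity above, while the remaining double sum factors as $a(\gamma_1)a(\gamma_2)$.

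Now I combine these: since $\mathfrak{F}((\gamma_1,\gamma_2))=\mathfrak{l}((\gamma_1,\gamma_2))\mathfrak{F}(\gamma_1)\mathfrak{F}(\gamma_2)$ and $t^{\mathfrak{l}((\gamma_1,\gamma_2))}=t\cdot t^{\mathfrak{l}(\gamma_1)}t^{\mathfrak{l}(\gamma_2)}$, the factor $\mathfrak{l}((\gamma_1,\gamma_2))$ cancels and one obtains exactly $w((\gamma_1,\gamma_2))\le 2t\,w(\gamma_1)w(\gamma_2)$. Splitting off the term $\gamma=0$ (for which $\mathfrak{l}(0)=0$, $\mathfrak{F}(0)=1$, $a(0)=1$, hence $w(0)=1$) and factoring the double sum over $\Gamma^{(k-1)}\times\Gamma^{(k-1)}$ then gives the recursion $S_k\le 1+2t\,S_{k-1}^2$ for $k\ge 2$, together with the base value $S_1=w(0)+w(1)=1+2t$. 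For $0<t\le\tfrac{1}{8}$ we have $2t\le\tfrac14$, hence $S_1\le\tfrac54<2$, and if $S_{k-1}\le 2$ then $S_k\le 1+\tfrac14\cdot 2^2=2$; by induction $S_k\le 2$ for all $k$, which is \eqref{eq:PexpansionsumtatementPFst}.

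The step I expect to be the main obstacle — and the one I would double-check most carefully — is the identity $\mathfrak{d}(\gamma)=\mathfrak{l}(\gamma)+1$: it is precisely what turns the combinatorial growth factor $\mathfrak{d}((\gamma_1,\gamma_2))+\mathfrak{l}(\gamma_1)+\mathfrak{l}(\gamma_2)$ arising from the $\mathbb{A}$-sum into $2\,\mathfrak{l}((\gamma_1,\gamma_2))$, which then cancels against the $\mathfrak{F}$-recursion and leaves the clean constant $2t$. A cruder bound on $\mathfrak{d}(\gamma)$ (for instance $\mathfrak{d}(\gamma)\le 2\mathfrak{l}(\gamma)+2$) would instead give $S_k\le 1+4t\,S_{k-1}^2$, whose iteration $x\mapsto 1+\tfrac12 x^2$ has no real fixed point and diverges; with the sharp identity in hand, the threshold $t=\tfrac18$ and the bound $2$ are exactly calibrated so that $2$ is the (double) fixed point of $x\mapsto 1+2tx^2$.
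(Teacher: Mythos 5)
Your proposal is correct and follows essentially the same route as the paper: split off $\gamma=0$, use the sumset structure of $\mathbb{A}^{(k,\gamma)}$ to absorb the added unit vector into a factor $2\,\mathfrak{l}(\gamma)$ (the paper does this via $\sum_{j_0}(\mathfrak{l}(\gamma^{(k-1)}_{j_0})+\mathfrak{d}(\gamma^{(k-1)}_{j_0}))$, which equals your $2\,\mathfrak{l}(\gamma)$ precisely because $\mathfrak{d}=\mathfrak{l}+1$), cancel it against the $\mathfrak{l}(\gamma)$ in $\mathfrak{F}(\gamma)$, factor the double sum, and close the induction via $S_k\le 1+2tS_{k-1}^2$ with base $S_1=1+2t$. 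Your explicit isolation of the identity $\mathfrak{d}(\gamma)=\mathfrak{l}(\gamma)+1$ and of the fixed-point calibration at $t=1/8$ is a clean way of making transparent what the paper leaves implicit in its chain of inequalities.
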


\begin{proof}
For $\gamma = 0 \in \Gamma^{(k)}$, we have $\mathfrak{l}(\gamma) = 0$, $\mathfrak{F}(\gamma) = 1$, $\mathbb{A}^{(k,\gamma)}= 0 \in \mathbb{Z}$, and hence
\begin{equation}\label{eq:PFstzerogamma}
\frac{t^{\mathfrak{l}(\gamma)}} {\mathfrak{F}(\gamma)} \sum_{\alpha = (\alpha_{i})_{1 \le i \le \mathfrak{d}(\gamma)} \in \mathbb{A}^{(k,\gamma)}} \prod \alpha_{i}! = 1.
\end{equation}

For $k = 1$, $\gamma = 1$, we have $\mathfrak{l}(\gamma) = 1$, $\mathfrak{F}(\gamma) = 1$, $\mathbb{A}^{(1,\gamma)} = \{(1,0),(0,1) \in \mathbb{Z}^2\}$, and hence
$$
\frac{t^{\mathfrak{l}(\gamma)}} {\mathfrak{F}(\gamma)} \sum_{\alpha = (\alpha_{i})_{1 \le i \le \mathfrak{d}(\gamma)} \in \mathbb{A}^{(k,\gamma)}} \prod \alpha_{i}! = 2t.
$$
In particular,
$$
\sum_{\gamma \in \Gamma^{(1)}} \frac{t^{\mathfrak{l}(\gamma)}} {\mathfrak{F}(\gamma)} \sum_{\alpha = (\alpha_{i})_{1 \le i \le \mathfrak{d}(\gamma)}
\in \mathbb{A}^{(k,\gamma)}} \prod \alpha_{i}! \le 1 + 2t < 2,
$$
and therefore \eqref{eq:PexpansionsumtatementPFst} holds when $k = 1$.

Let $k \ge 2$. Then,
\begin{align*}
\sum_{\gamma \in \Gamma^{(k)} \setminus \{0\}} & \frac{t^{\mathfrak{l}(\gamma)}} {\mathfrak{F}(\gamma)} \sum_{\alpha \in \mathbb{A}^{(k,\gamma)}} \prod_{i=1}^{\mathfrak{d}(\gamma)} \alpha_i! \\
& = \sum_{(\gamma^{(k-1)}_1, \gamma^{(k-1)}_2) \in \Gamma^{(k-1)} \times \Gamma^{(k-1)}} \frac{t}{\mathfrak{l}(\gamma^{(k-1)}_1) + \mathfrak{l}(\gamma^{(k-1)}_2)+1} \prod_{j=1,2} \frac{t^{\mathfrak{l}(\gamma^{(k-1)}_j)}} {\mathfrak{F}(\gamma^{(k-1)}_j)} \\
& \qquad \sum_{\alpha = (\alpha^{(1)}, \alpha^{(2)}) + \beta, \, \alpha^{(j)} \in \mathbb{A}^{(k-1,\gamma^{(k-1)}_j)}, \, \beta \in \mathbb{I}^{(k,(\gamma^{(k-1)}_1, \gamma^{(k-1)}_2))}} \prod_{i=1}^{\mathfrak{d}(\gamma)} \alpha_i! \\
& \le \sum_{(\gamma^{(k-1)}_1, \gamma^{(k-1)}_2) \in \Gamma^{(k-1)} \times \Gamma^{(k-1)}} \frac{t}{\mathfrak{l}(\gamma^{(k-1)}_1) + \mathfrak{l}(\gamma^{(k-1)}_2)+1} \sum_{\alpha^{(j)} \in \mathbb{A}^{(k-1,\gamma^{(k-1)}_j)}, \, j=1,2} \\
& \qquad \sum_{j_0=1,2} \sum_{i_0=1}^{\mathfrak{d}(\gamma^{(k-1)}_{j_0})} \prod_{j=1,2} \frac{t^{\mathfrak{l}(\gamma^{(k-1)}_j)}} {\mathfrak{F}(\gamma^{(k-1)}_j)} \prod_{i=1}^{\mathfrak{d}(\gamma^{(k-1)}_j)} ((\alpha^{(j)})_i + \delta_{i,i_0} \delta_{j,j_0})! \\
& = \sum_{(\gamma^{(k-1)}_1, \gamma^{(k-1)}_2) \in \Gamma^{(k-1)} \times \Gamma^{(k-1)}} \frac{t}{\mathfrak{l}(\gamma^{(k-1)}_1) + \mathfrak{l}(\gamma^{(k-1)}_2)+1} \sum_{\alpha^{(j)} \in \mathbb{A}^{(k-1,\gamma^{(k-1)}_j)}, \, j=1,2} \\
& \qquad \sum_{j_0=1,2} \sum_{i_0 = 1}^{\mathfrak{d} (\gamma^{(k-1)}_{j_0})} ((\alpha^{(j_0)})_{i_0} + 1) \prod_{j = 1,2} \frac{t^{\mathfrak{l} (\gamma^{(k-1)}_j)}} {\mathfrak{F} (\gamma^{(k-1)}_j)} \prod_{i=1}^{\mathfrak{d}(\gamma^{(k-1)}_j)} ((\alpha^{(j)})_i)! \\
& = \sum_{(\gamma^{(k-1)}_1, \gamma^{(k-1)}_2) \in \Gamma^{(k-1)} \times \Gamma^{(k-1)}} \frac{t}{\mathfrak{l}(\gamma^{(k-1)}_1) + \mathfrak{l}(\gamma^{(k-1)}_2)+1} \sum_{\alpha^{(j)} \in \mathbb{A}^{(k-1,\gamma^{(k-1)}_j)}, \, j=1,2} \\
& \qquad \sum_{j_0=1,2} (\mathfrak{l} (\gamma^{(k-1)}_{j_0}) + \mathfrak{d} (\gamma^{(k-1)}_{j_0})) \prod_{j=1,2} \frac{t^{\mathfrak{l} (\gamma^{(k-1)}_j)}} {\mathfrak{F}(\gamma^{(k-1)}_j)} \prod_{i=1}^{\mathfrak{d}(\gamma^{(k-1)}_j)} ((\alpha^{(j)})_i)! \\
& = 2 t \sum_{(\gamma^{(k-1)}_1, \gamma^{(k-1)}_2) \in \Gamma^{(k-1)} \times \Gamma^{(k-1)}} \sum_{\alpha^{(j)} \in \mathbb{A}^{(k-1,\gamma^{(k-1)}_j)}, \, j=1,2} \prod_{j=1,2} \frac{t^{\mathfrak{l} (\gamma^{(k-1)}_j)}} {\mathfrak{F}(\gamma^{(k-1)}_j)}
\prod_{i=1}^{\mathfrak{d}(\gamma^{(k-1)}_j)} ((\alpha^{(j)})_i)! \\
& = 2 t \prod_{j=1,2} \frac{t^{\mathfrak{l}(\gamma^{(k-1)}_j)}} {\mathfrak{F}(\gamma^{(k-1)}_j)} \sum_{\gamma^{(k-1)}_j \in \Gamma^{(k-1)}} \sum_{\alpha^{(j)} \in \mathbb{A}^{(k-1,\gamma^{(k-1)}_j)}} \prod_{i=1}^{\mathfrak{d}(\gamma^{(k-1)}_j)} ((\alpha^{(j)})_i)! \\
& \le 8t.
\end{align*}
Here we used the fact that for any $\gamma$, we have $\mathfrak{d}(\gamma) = \mathfrak{l}(\gamma)+1$, which implies the identity
\[
\sum_{j_0=1,2}\frac{\mathfrak{l}(\gamma^{(k-1)}_{j_0})+\mathfrak{d}(\gamma^{(k-1)}_{j_0})}{\mathfrak{l}(\gamma^{(k-1)}_1) + \mathfrak{l}(\gamma^{(k-1)}_2)+1}= 2
\]
for any $\gamma_1,\gamma_2$.

Combining this with \eqref{eq:PFstzerogamma}, we obtain \eqref{eq:PexpansionsumtatementPFst} when $k \ge 2$.
\end{proof}

\begin{corollary}\label{cor:12system1cestimatesNEW}
For $0 \le t \le \kappa^\nu/(8B_0C_0^\nu|\omega|)$, we have
$$
\sum_{\gamma \in \Gamma^{(k)}} \sum_{m^{(k)} \in \mathfrak{M}^{(k,\gamma)} : \mu(m^{(k)}) = n} B_0^{\mathfrak{d}(\gamma)} \exp(-\kappa |m^{(k)}|) \mathfrak{P}(m^{(k)}) \frac{(|\omega|t)^{\mathfrak{l}(\gamma)}}{\mathfrak{F}(\gamma)}\le 2B_0.
$$
\end{corollary}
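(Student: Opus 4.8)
The plan is to view this corollary as the assembly step that turns the purely combinatorial estimates of Lemmas~\ref{lem:Pexpansion}--\ref{lem:PexpansionsumF} into a bound on the physical weights. For each fixed tree $\gamma\in\Gamma^{(k)}$ the summand factors as $B_0^{\mathfrak{d}(\gamma)}\,(|\omega|t)^{\mathfrak{l}(\gamma)}/\mathfrak{F}(\gamma)$ times the inner sum $S_\gamma(n):=\sum_{m^{(k)}\in\mathfrak{M}^{(k,\gamma)},\,\mu(m^{(k)})=n}\exp(-\kappa|m^{(k)}|)\,\mathfrak{P}(m^{(k)})$, so I would first bound $S_\gamma(n)$ for fixed $\gamma$ and then carry out the outer sum over $\gamma$ using Lemma~\ref{lem:PexpansionsumF}.

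For the inner sum, since every summand is nonnegative, I would discard the constraint $\mu(m^{(k)})=n$ and pass to the unrestricted sum over $m^{(k)}\in\mathfrak{M}^{(k,\gamma)}$; this is exactly what makes the right-hand side the constant $2$ rather than an $n$-dependent quantity, and it is also why the admissible range of $t$ carries the plain factor $8$ rather than a $\nu$-dependent constant (no halving of $\kappa$, hence no extra $2^\nu$, is needed, so one uses Lemma~\ref{lem:PexpansionsumAA}(2) and not the $\mu(m^{(k)})=n$ variant (3)). I would then expand $\mathfrak{P}(m^{(k)})$ via Lemma~\ref{lem:Pexpansion} and sum term by term with Lemma~\ref{lem:elementarycalculus}, exactly as in the proof of Lemma~\ref{lem:PexpansionsumAA}(2), taking care to keep the $\alpha$-summation over $\mathbb{A}^{(k,\gamma)}$ rather than enlarging it to $\{\alpha:|\alpha|=\mathfrak{l}(\gamma)\}$, so that it matches the summation appearing in Lemma~\ref{lem:PexpansionsumF}. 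This yields $S_\gamma(n)\le (C_0^\nu\kappa^{-\nu})^{\mathfrak{l}(\gamma)}\sum_{\alpha\in\mathbb{A}^{(k,\gamma)}}\prod_i\alpha_i!$.

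The decisive algebraic input is the identity $\mathfrak{d}(\gamma)=\mathfrak{l}(\gamma)+1$, which I would prove by induction along the recursion \eqref{eq:12Bernoulli1}: both quantities add over the two children of a node, and the increment $+1$ of $\mathfrak{l}$ at each branching is matched by the agreement of the base values $(\mathfrak{d}(0),\mathfrak{l}(0))=(1,0)$ and $(\mathfrak{d}(1),\mathfrak{l}(1))=(2,1)$. With it, the product $B_0^{\mathfrak{d}(\gamma)}(C_0^\nu\kappa^{-\nu})^{\mathfrak{l}(\gamma)}(|\omega|t)^{\mathfrak{l}(\gamma)}$ collapses to a single power $s^{\mathfrak{l}(\gamma)}$ of the effective parameter $s:=B_0C_0^\nu\kappa^{-\nu}|\omega|t$ (the one remaining data factor being absorbed into the final constant), and the hypothesis $0\le t\le \kappa^\nu/8B_0C_0^\nu|\omega|$ is precisely the statement $s\le 1/8$, i.e.\ the admissible range for Lemma~\ref{lem:PexpansionsumF}. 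Feeding $s$ into Lemma~\ref{lem:PexpansionsumF} in place of its parameter and summing over $\gamma\in\Gamma^{(k)}$ then produces the bound $2$. I expect the only genuine obstacles to be the two bookkeeping matches just flagged: producing $S_\gamma(n)$ in its tight $\mathbb{A}^{(k,\gamma)}$-form so that Lemmas~\ref{lem:PexpansionsumAA} and~\ref{lem:PexpansionsumF} chain directly, and verifying that, once $\mathfrak{d}(\gamma)=\mathfrak{l}(\gamma)+1$ is used, the powers of $B_0$, $C_0^\nu\kappa^{-\nu}$, $|\omega|$, and $t$ collapse exactly to $s^{\mathfrak{l}(\gamma)}$ with $s\le 1/8$, so that the constant $2$ of Lemma~\ref{lem:PexpansionsumF} is reproduced verbatim.
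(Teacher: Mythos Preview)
Your approach is exactly the paper's: the proof there reads, in full, ``The statement follows from part~$2$ of Lemma~\ref{lem:PexpansionsumAA} combined with Lemma~\ref{lem:PexpansionsumF}.'' You have supplied the details the paper suppresses---dropping the constraint $\mu(m^{(k)})=n$, the identity $\mathfrak{d}(\gamma)=\mathfrak{l}(\gamma)+1$, and keeping the $\alpha$-sum over $\mathbb{A}^{(k,\gamma)}$ so that Lemma~\ref{lem:PexpansionsumF} applies verbatim.

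One point of care, which both you and the paper elide: after writing $B_0^{\mathfrak{d}(\gamma)}=B_0\cdot B_0^{\mathfrak{l}(\gamma)}$ and folding $B_0^{\mathfrak{l}(\gamma)}$ into $s^{\mathfrak{l}(\gamma)}$, a stray factor of $B_0$ remains in front of the sum, so the bound that actually falls out of Lemma~\ref{lem:PexpansionsumF} is $2B_0$, not $2$. (Sanity check: the $\gamma=0$ term at $n=0$ already contributes $B_0^{1}\cdot e^{0}\cdot 1\cdot 1=B_0$.) Your phrase ``absorbed into the final constant'' does not cover this when $B_0>1$. This is harmless for the paper's purposes---the corollary is used only to guarantee absolute and uniform convergence of the series in Corollary~\ref{cor:12system1cestimates}(2), and the downstream estimate $|c_k(t,n)|\le 2B_0\exp(-\kappa|n|/2)$ is derived separately from Lemma~\ref{lem:PexpansionsumAA}(3)---but the constant $2$ as stated appears to be a slip shared with the paper rather than something your argument can actually deliver.
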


\begin{proof}
The statement follows from part $2$ of Lemma~\ref{lem:PexpansionsumAA} combined with Lemma~\ref{lem:PexpansionsumF}. Here we use the identity
$\mathfrak{d}(\gamma)=\mathfrak{l}(\gamma)+1$. The factor $B_0$ in the right-hand side of the estimate appears due to the contribution
of the only term with $\gamma=0$.
\end{proof}

\begin{corollary}\label{cor:12system1cestimates} The following statements hold:

$(1)$ Set
$$
\mathfrak{C}(\mathfrak{m}) = \prod_j c(m_j), \quad \text{where $\mathfrak{m} = \{m_1, \dots, m_N\}$, $m_j \in \mathbb{Z}^\nu$}.
$$
Then, for $m^{(k)} \in \mathfrak{M}^{(k,\gamma)}$, we have
$$
|\mathfrak{C}(m^{(k)})| \le B_0^{\mathfrak{d}(\gamma)} \exp(-\kappa |m^{(k)}|).
$$

$(2)$ The functions $c_k(t,n)$ are well-defined and continuous for $0 \le t \le \kappa^\nu/(8B_0C_0^\nu|\omega|)$ and the following identities hold
$$
c_k(t,n) = \sum_{\gamma \in \Gamma^{(k)}} \sum_{m^{(k)} \in \mathfrak{M}^{(k,\gamma)} : \mu(m^{(k)}) = n} \mathfrak{C} (m^{(k)}) \mathfrak{f}^{(k,\gamma)} (m^{(k)}) I^{(k,\gamma)}(t,m^{(k)}).
$$
All the series involved converge absolutely and uniformly on the interval  $0 \le t \le \kappa^\nu/(8B_0C_0^\nu|\omega|)$.

$(3)$ For $0 < t <\kappa^\nu/(8B_02^\nu C_0^\nu|\omega|)$, we have
$$
|c_k(t,n)| \le 2 B_0 \exp \Big( -\frac{\kappa |n|}{2} \Big).
$$
\end{corollary}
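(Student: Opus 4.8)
The plan is to prove part $(1)$ on its own and then parts $(2)$ and $(3)$ together by a single induction on $k$. Part $(1)$ is a routine induction on the tree structure of $\gamma$: if $\gamma = 0 \in \Gamma^{(k)}$ then $m^{(k)} = m \in \mathbb{Z}^\nu$, $\mathfrak{d}(\gamma) = 1$, $\mathfrak{C}(m^{(k)}) = c(m)$, and the claim is \eqref{eq:decay}; if $k = 1$ and $\gamma = 1$, then $m^{(1)} = (m_1, m_2)$, $\mathfrak{d}(\gamma) = 2$, $|m^{(1)}| = |m_1| + |m_2|$, $\mathfrak{C}(m^{(1)}) = c(m_1) c(m_2)$, and again \eqref{eq:decay} gives the bound; in the step $\gamma = (\gamma^{(k-1)}_1, \gamma^{(k-1)}_2)$, $m^{(k)} = (m^{(k-1)}_1, m^{(k-1)}_2)$, the quantities $\mathfrak{C}$, $\mathfrak{d}$ and $|\cdot|$ all decompose over the two coordinates ($\mathfrak{C}$ multiplicatively, $\mathfrak{d}$ and $|\cdot|$ additively), so the bound follows by multiplying the two inductive estimates.

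For $(2)$ and $(3)$ I would induct on $k$. When $k = 1$, substituting $c_0(\tau, m_j) = c(m_j)\exp(i\tau(m_j\omega)^3)$ into \eqref{eq:12FourierFdefi1a} and comparing with the definitions of $\mathfrak{C}$, $\mathfrak{f}^{(1,\gamma)}$, $I^{(1,\gamma)}$ over $\Gamma^{(1)} = \{0,1\}$ yields the identity of $(2)$ at once, the $\gamma = 0$ summand producing the free term $c(n)\exp(it(n\omega)^3)$ and the $\gamma = 1$ summand the Duhamel term. For the step $k - 1 \to k$ I would insert the $(k-1)$-expansion (valid by the inductive hypothesis) for $c_{k-1}(\tau, m_1)$ and $c_{k-1}(\tau, m_2)$ into \eqref{eq:12FourierFdefi1a}; using $\Gamma^{(k)} = \{0\} \cup \Gamma^{(k-1)} \times \Gamma^{(k-1)}$ and the recursive definitions of $\mathfrak{M}^{(k,\gamma)}$, $\mathfrak{C}$, $\mathfrak{f}^{(k,\gamma)}$ and $I^{(k,\gamma)}$ in \eqref{eq:12FourierB2a}--\eqref{eq:12FourierB2}, the double sum over the components $\gamma_1, \gamma_2 \in \Gamma^{(k-1)}$ together with the constraint $m_1 + m_2 = n$ reorganizes into the sum over $\gamma = (\gamma_1, \gamma_2) \in \Gamma^{(k)} \setminus \{0\}$ and $m^{(k)} = (m^{(k-1)}_1, m^{(k-1)}_2) \in \mathfrak{M}^{(k,\gamma)}$ with $\mu(m^{(k)}) = n$: the product $\mathfrak{C}(m^{(k-1)}_1)\mathfrak{C}(m^{(k-1)}_2)$ collapses to $\mathfrak{C}(m^{(k)})$; since $n = \mu(m^{(k)})$, the factor $-\tfrac{i n\omega}{2}\,\mathfrak{f}^{(k-1,\gamma_1)}(m^{(k-1)}_1)\,\mathfrak{f}^{(k-1,\gamma_2)}(m^{(k-1)}_2)$ equals $\mathfrak{f}^{(k,\gamma)}(m^{(k)})$; and the $\tau$-integral of $\exp(i(t-\tau)(n\omega)^3)\, I^{(k-1,\gamma_1)}(\tau,\cdot)\, I^{(k-1,\gamma_2)}(\tau,\cdot)$ equals $I^{(k,\gamma)}(t,m^{(k)})$; the residual free term $c(n)\exp(it(n\omega)^3)$ is the $\gamma = 0$ summand.

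The analytic content of $(2)$---absolute and uniform convergence on $0 \le t \le \kappa^\nu/(8 B_0 C_0^\nu |\omega|)$ and the legitimacy of exchanging the infinite $(m_1, m_2)$-sum with the $\tau$-integral---is exactly what the preliminary estimates supply: part $(1)$ together with Lemma~\ref{lem:12system1} bounds the general summand in modulus by $B_0^{\mathfrak{d}(\gamma)} \exp(-\kappa|m^{(k)}|)\, \mathfrak{P}(m^{(k)})\, (|\omega|t)^{\mathfrak{l}(\gamma)}/\mathfrak{F}(\gamma)$, and Corollary~\ref{cor:12system1cestimatesNEW} shows the sum of these over $\gamma \in \Gamma^{(k)}$ and over $m^{(k)}$ with $\mu(m^{(k)}) = n$ is at most $2$ on that interval; this uniform domination yields the convergence, hence continuity of $c_k(t,n)$, and legitimizes the interchange. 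Parts $(2)$ and $(3)$ must be run jointly: the very definition of $c_k$ in \eqref{eq:12FourierFdefi1a} requires $\sum_{m_1 + m_2 = n} c_{k-1}(\tau, m_1) c_{k-1}(\tau, m_2)$ to converge, which is guaranteed by the bound $|c_{k-1}(\tau, m_j)| \le 2 B_0 \exp(-\tfrac{\kappa}{2}|m_j|)$ of part $(3)$ at level $k - 1$. Finally, $(3)$ follows from the identity of $(2)$: one bounds $|c_k(t,n)|$ by the same sum of moduli, uses $|m^{(k)}| \ge |\mu(m^{(k)})| = |n|$ to extract a factor $\exp(-\tfrac{\kappa}{2}|n|)$ while weakening $\exp(-\kappa|m^{(k)}|)$ to $\exp(-\tfrac{\kappa}{2}|m^{(k)}|)$, and applies Corollary~\ref{cor:12system1cestimatesNEW} with $\kappa$ replaced by $\kappa/2$---which is permissible exactly on $0 < t < (\kappa/2)^\nu/(8 B_0 C_0^\nu |\omega|) = \kappa^\nu/(8 B_0 2^\nu C_0^\nu |\omega|)$---to reach the bound $2 B_0 \exp(-\tfrac{\kappa}{2}|n|)$; the single surviving power of $B_0$ reflects the identity $\mathfrak{d}(\gamma) = \mathfrak{l}(\gamma) + 1$.

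I expect the main obstacle to be the bookkeeping inside the inductive step of $(2)$: verifying that the recursive definitions of $\mathfrak{M}^{(k,\gamma)}$, $\mathfrak{C}$, $\mathfrak{f}^{(k,\gamma)}$ and $I^{(k,\gamma)}$ are arranged so that the product of two level-$(k-1)$ expansions, after imposing $m_1 + m_2 = n$ and integrating against $\exp(i(t-\tau)(n\omega)^3)$, reassembles term by term into the level-$k$ expansion with no stray constants---and, interleaved with this, ordering the quantifiers so that the uniform bound of Corollary~\ref{cor:12system1cestimatesNEW} is already available when the sum-integral interchange is performed.
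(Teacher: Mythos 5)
Your proposal is correct and follows essentially the same route as the paper: part $(1)$ from the definitions and the decay assumption \eqref{eq:decay}, part $(2)$ by unfolding the recursion \eqref{eq:12FourierFdefi1a} into the tree expansion with absolute/uniform convergence supplied by the termwise bounds of Lemma~\ref{lem:12system1} and Corollary~\ref{cor:12system1cestimatesNEW}, and part $(3)$ by the same exponential splitting $\exp(-\kappa|m^{(k)}|)\le\exp(-\tfrac{\kappa}{2}|n|)\exp(-\tfrac{\kappa}{2}|m^{(k)}|)$. Your invocation of Corollary~\ref{cor:12system1cestimatesNEW} at scale $\kappa/2$ in part $(3)$ is just a repackaging of the paper's use of part $3$ of Lemma~\ref{lem:PexpansionsumAA} together with Lemma~\ref{lem:PexpansionsumF}, and your $B_0$ bookkeeping via $\mathfrak{d}(\gamma)=\mathfrak{l}(\gamma)+1$ matches the intended computation.
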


\begin{proof}
$(1)$ The statement follows from the definition of $\mathfrak{M}^{(k,\gamma)}$ and $\mathfrak{C}(m^{(k)})$, and the decay assumption \eqref{eq:decay}.

$(2)$ The statement follows from the definitions \eqref{eq:c0def}--\eqref{eq:12FourierB2}. The absolute and uniform convergence of all the series involved is due to Corollary~\ref{cor:12system1cestimatesNEW}.

$(3)$ Due to part $2$, we have
$$
|c_k(t,n)| \le \sum_{\gamma \in \Gamma^{(k)}} \sum_{m^{(k)} \in \mathfrak{M}^{(k,\gamma)} : \mu(m^{(k)}) = n} B_0^{\mathfrak{d}(\gamma)} \exp(-\kappa |m^{(k)}|) \mathfrak{P}(m^{(k)}) \frac{(|\omega|t)^{\mathfrak{l}(\gamma)}}{\mathfrak{F}(\gamma)}.
$$
Combining this with part $3$ of Lemma~\ref{lem:PexpansionsumAA} and with Lemma~\ref{lem:PexpansionsumF}, one obtains the estimate.
\end{proof}

Let
\begin{align*}
\mathbb{I}^{(k)} & = \{ \alpha \in \mathbb{Z}^{k+1} : \sum_j \alpha_j=1, \quad \alpha_j \ge 0\}, \\
\mathbb{B}^{(k)} & = \begin{cases} \{ (\alpha_1, \alpha_2) \in \mathbb{Z}^2 : \alpha_1 + \alpha_2 = 1, \quad \alpha_j \ge 0\}, & k = 1, \\ \mathbb{B}^{(k-1)} \times \{ 0 \in \mathbb{Z} \} + \mathbb{I}^{(k)}, & k \ge 2. \end{cases}
\end{align*}
Notice that for any $\alpha \in \mathbb{B}^{(k)}$, we have
\begin{equation}\label{eq:12BsetsBBsumalphas}
\alpha \in \mathbb{R}^{k+1}, \quad \sum_j \alpha_j = k.
\end{equation}

\begin{lemma}\label{lem:12system1cestimates}
For $0 < t < \kappa^\nu/(8 B_0 2^\nu C_0^\nu |\omega|)$, we have
\begin{equation}\label{eq:12FourierB4eststsIkk}
|c_{k}(t,n) - c_{k-1}(t,n)| \le \frac{B_0^{k+1}(2 |\omega| t)^{k}}{k!} \sum_{m = (m_1, \dots, m_{k+1}) \in \mathbb{Z}^{(k+1)\nu} : \sum_j m_j = n} \sum_{\alpha \in \mathbb{B}^{(k)}} \prod_j |m_j|^{\alpha_j} \exp \Big( -\frac{\kappa}{2} |m_j| \Big).
\end{equation}
\end{lemma}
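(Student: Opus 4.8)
The plan is to argue by induction on $k$, converting the recursion \eqref{eq:12FourierFdefi1a} that defines $c_k$ into a recursion for the differences $d_k := c_k - c_{k-1}$ and feeding into it the decay estimates already available for the $c_j$. Subtracting \eqref{eq:c0def} from \eqref{eq:12FourierFdefi1a} with $k=1$ gives
$$d_1(t,n) = -\frac{in\omega}{2}\int_0^t \exp(i(t-\tau)(n\omega)^3)\sum_{\ell_1+\ell_2=n}c_0(\tau,\ell_1)c_0(\tau,\ell_2)\,d\tau,$$
while for $k\ge 2$, subtracting \eqref{eq:12FourierFdefi1a} at level $k-1$ from the one at level $k$ and using the algebraic identity
$$c_{k-1}(\tau,\ell_1)c_{k-1}(\tau,\ell_2) - c_{k-2}(\tau,\ell_1)c_{k-2}(\tau,\ell_2) = d_{k-1}(\tau,\ell_1)c_{k-1}(\tau,\ell_2) + c_{k-2}(\tau,\ell_1)d_{k-1}(\tau,\ell_2)$$
yields
$$d_k(t,n) = -\frac{in\omega}{2}\int_0^t \exp(i(t-\tau)(n\omega)^3)\sum_{\ell_1+\ell_2=n}\bigl[d_{k-1}(\tau,\ell_1)c_{k-1}(\tau,\ell_2) + c_{k-2}(\tau,\ell_1)d_{k-1}(\tau,\ell_2)\bigr]\,d\tau.$$
All the interchanges of sums and integrals below are justified by Remark~\ref{rem.cont} and Corollary~\ref{cor:12system1cestimates}, which also guarantee that the $d_k$ are continuous on the interval in question.

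For the base case $k=1$ I would bound $|c_0(\tau,\ell)| = |c(\ell)| \le B_0\exp(-\kappa|\ell|) \le B_0\exp(-\tfrac{\kappa}{2}|\ell|)$, use $|n\omega|\le |n||\omega|$, $|n|\le |\ell_1|+|\ell_2|$ and $\int_0^t d\tau = t$, and note that $(|\ell_1|+|\ell_2|)\exp(-\tfrac{\kappa}{2}|\ell_1|)\exp(-\tfrac{\kappa}{2}|\ell_2|)$ is exactly $\sum_{\alpha\in\mathbb{B}^{(1)}}\prod_j|\ell_j|^{\alpha_j}\exp(-\tfrac{\kappa}{2}|\ell_j|)$, since $\mathbb{B}^{(1)}=\{(1,0),(0,1)\}$. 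This gives \eqref{eq:12FourierB4eststsIkk} for $k=1$, in fact with a factor $\tfrac14$ to spare.

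For the inductive step, assume \eqref{eq:12FourierB4eststsIkk} at level $k-1$. In the difference recursion I would estimate $|c_{k-1}(\tau,\ell_2)|$ and $|c_{k-2}(\tau,\ell_1)|$ by $2B_0\exp(-\tfrac{\kappa}{2}|\cdot|)$ using Corollary~\ref{cor:12system1cestimates}$(3)$ (whose hypothesis is precisely the range of $t$ in the statement), then use the symmetry $\ell_1\leftrightarrow\ell_2$ to merge the two terms into $2B_0|n||\omega|\int_0^t\sum_{\ell_1+\ell_2=n}|d_{k-1}(\tau,\ell_1)|\exp(-\tfrac{\kappa}{2}|\ell_2|)\,d\tau$. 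Inserting the inductive bound for $|d_{k-1}(\tau,\ell_1)|$, which carries the factor $(2|\omega|\tau)^{k-1}$ and runs over $k$ momenta $p_1,\dots,p_k$ with $\sum_j p_j=\ell_1$, and integrating via $\int_0^t\tau^{k-1}\,d\tau = t^k/k$, the numerical constant assembles into $B_0^{k+1}(2|\omega|t)^k/k!$. After reindexing by $m=(m_1,\dots,m_{k+1})=(p_1,\dots,p_k,\ell_2)$, so that $\sum_j m_j=n$, it remains to show
\begin{align*}
&|n|\sum_{\substack{m\in\mathbb{Z}^{(k+1)\nu}\\ \sum_j m_j = n}}\ \sum_{\beta\in\mathbb{B}^{(k-1)}}\ \prod_{j=1}^{k}|m_j|^{\beta_j}\exp\Bigl(-\tfrac{\kappa}{2}|m_j|\Bigr)\,\exp\Bigl(-\tfrac{\kappa}{2}|m_{k+1}|\Bigr) \\
&\qquad\le\ \sum_{\substack{m\in\mathbb{Z}^{(k+1)\nu}\\ \sum_j m_j = n}}\ \sum_{\alpha\in\mathbb{B}^{(k)}}\ \prod_{j=1}^{k+1}|m_j|^{\alpha_j}\exp\Bigl(-\tfrac{\kappa}{2}|m_j|\Bigr).
\end{align*}
Applying $|n|\le\sum_{i=1}^{k+1}|m_i|$ and distributing each $|m_i|$ into the product turns the $i$-th summand into $\prod_{j=1}^{k+1}|m_j|^{((\beta,0)+e_i)_j}\exp(-\tfrac{\kappa}{2}|m_j|)$, where $e_i$ is the $i$-th standard basis vector of $\mathbb{Z}^{k+1}$; since $(\beta,0)+e_i$ runs over $\mathbb{B}^{(k-1)}\times\{0\}+\mathbb{I}^{(k)}=\mathbb{B}^{(k)}$ as $\beta$ runs over $\mathbb{B}^{(k-1)}$ and $e_i$ over $\mathbb{I}^{(k)}=\{e_1,\dots,e_{k+1}\}$, the right-hand side is reproduced and the induction closes.

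The point requiring the most care is the exact bookkeeping of the constant $B_0^{k+1}(2|\omega|t)^k/k!$: the $2B_0$ from Corollary~\ref{cor:12system1cestimates}$(3)$, the factor $2$ gained from the $\ell_1\leftrightarrow\ell_2$ symmetry, the $\tfrac12$ in front of $in\omega$, and the $\tfrac1k$ from $\int_0^t\tau^{k-1}\,d\tau$ must combine with no residual slack beyond what is already gained at $k=1$. On the combinatorial side, the subtlety is that $\sum_{\alpha\in\mathbb{B}^{(k)}}$ must be read off the recursive definition of $\mathbb{B}^{(k)}$ with the multiplicities that the Minkowski construction $\mathbb{B}^{(k-1)}\times\{0\}+\mathbb{I}^{(k)}$ carries (exactly as in the proofs of Lemmas~\ref{lem:Pexpansion} and~\ref{lem:PexpansionsumF}); with that reading the final step is an identity of the two sums, whereas treating $\mathbb{B}^{(k)}$ as a plain set would make the distribution of $|n|\le\sum_i|m_i|$ overcount and push the inequality in the wrong direction.
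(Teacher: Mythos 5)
Your argument is correct and follows essentially the same route as the paper's proof: induction on $k$, splitting the product difference into two terms, bounding $|c_{k-1}|$, $|c_{k-2}|$ by $2B_0\exp(-\tfrac{\kappa}{2}|\cdot|)$ via Corollary~\ref{cor:12system1cestimates}, integrating $\tau^{k-1}$, and redistributing $|n|\le\sum_i|m_i|$ through the recursive structure of $\mathbb{B}^{(k)}$. The only differences are cosmetic: you merge the two terms by the $\ell_1\leftrightarrow\ell_2$ symmetry where the paper estimates them separately, and you make explicit the with-multiplicity reading of $\mathbb{B}^{(k-1)}\times\{0\}+\mathbb{I}^{(k)}$ that the paper leaves implicit.
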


\begin{proof}
Recall that $c_0(t,n) := c(n) \exp(i t(n\omega)^3)$, and for $k = 1, 2, \dots$,
$$
c_k(t,n) = c(n) \exp(i t(n\omega)^3) -\frac{i n \omega}{2} \int_0^t \exp(i(t - \tau) (n\omega)^3) \sum_{m_1, m_2 \in \mathbb{Z}^\nu : m_1 + m_2 = n} c_{k-1}(\tau, m_1) c_{k-1}(\tau, m_2) \, d \tau,
$$
$n \in \mathbb{Z}^\nu$. In particular,
\begin{align*}
|c_1(t,n) - c_0(t,n)| & \le \frac{|n| |\omega|}{2} \int_0^t \sum_{m_1, m_2 \in \mathbb{Z}^\nu : m_1 + m_2 = n} |c_{0}(\tau,m_1)| |c_{0}(\tau,m_2)| \, d \tau \\
& \le \frac{B_0^2t|\omega|}{2} \sum_{m_1, m_2 \in \mathbb{Z}^\nu : m_1 + m_2 = n} \Big| \sum_j m_j \Big| \exp(-\kappa (|m_1| + |m_2|)).
\end{align*}
Thus, \eqref{eq:12FourierB4eststsIkk} holds for $k = 1$.

Let $k \ge 2$. Assume the estimate holds for any $1 \le k' \le k-1$. We have
\begin{align*}
|c_k(t,n) - c_{k-1}(t,n)| & \le \frac{|n| |\omega|}{2} \int_0^t \sum_{m_1, m_2 \in \mathbb{Z}^\nu : m_1 + m_2 = n} |c_{k-1}(\tau, m_1) c_{k-1}(\tau, m_2) - c_{k-2}(\tau, m_1) c_{k-2}(\tau, m_2)| \,  d \tau \\
& \le \frac{|n| |\omega|}{2} \int_0^t \sum_{m_1, m_2 \in \mathbb{Z}^\nu : m_1 + m_2 = n} |c_{k-1}(\tau, m_1) - c_{k-2}(\tau, m_1)| |c_{k-1}(\tau, m_2)| \, d \tau \\
& \qquad + \frac{|n| |\omega|}{2} \int_0^t \sum_{m_1, m_2 \in \mathbb{Z}^\nu : m_1 + m_2 = n} |c_{k-1}(\tau, m_2) - c_{k-2}(\tau, m_2)| |c_{k-2}(\tau, m_1)| \, d \tau.
\end{align*}
Using the inductive assumption and the estimate from Corollary~\ref{cor:12system1cestimates}, we obtain
\begin{align*}
\frac{|n| |\omega|}{2} & \int_0^t \sum_{n_1, n_2 \in \mathbb{Z}^\nu : n_1 + n_2 = n} |c_{k-1}(\tau, n_1) - c_{k-2}(\tau, n_1)| |c_{k-1}(\tau, n_2)|
\, d \tau \\
& \le \frac{|\omega|}{2} \int_0^t \sum_{n_1, n_2 \in \mathbb{Z}^\nu : n_1 + n_2 = n} \frac{B_0^{k}(2 |\omega| \tau)^{k-1}}{(k-1)!} \sum_{m = (m_1, \dots, m_k) \in \mathbb{Z}^{k\nu} : \sum_j m_j = n_1} \Big| \Big( \sum_j m_j \Big) + n_2 \Big| \times \\
& \qquad \sum_{\alpha \in \mathbb{B}^{(k-1)}} \prod_j |m_j|^{\alpha_j} \exp \Big( -\frac{\kappa}{2} |m_j| \Big) \Big( 2 B_0 \exp \Big( -\frac{\kappa}{2} |n_2| \Big) \Big) \, d \tau \\
& \le \frac{2^{k-1} B_0^{k+1} (|\omega| t)^{k}}{k!} \sum_{m = (m_1, \dots, m_{k+1}) \in \mathbb{Z}^{(k+1) \nu} : \sum_j m_j = n} \sum_{\alpha \in \mathbb{B}^{(k)}} \prod_j |m_j|^{\alpha_j} \exp \Big( -\frac{\kappa}{2} |m_j| \Big).
\end{align*}
Similarly,
\begin{align*}
\frac{|n| |\omega|}{2} & \int_0^t \sum_{n_1, n_2 \in \mathbb{Z}^\nu : n_1 + n_2 = n} |c_{k-1}(\tau, n_2) - c_{k-2}(\tau, n_2)| |c_{k-2}(\tau, n_1)|
\, d \tau \\
& \le \frac{2^{k-1} B_0^{k+1} (|\omega| t)^{k}}{k!} \sum_{m = (m_1, \dots, m_{k+1}) \in \mathbb{Z}^{(k+1) \nu} : \sum_j m_j = n} \sum_{\alpha \in \mathbb{B}^{(k)}} \prod_j |m_j|^{\alpha_j} \exp \Big( -\frac{\kappa}{2} |m_j| \Big).
\end{align*}
Putting the three estimates together, the assertion follows.
\end{proof}

\begin{corollary}\label{cor:12system1cestimatesBk}
With the constant $C_0$ from Lemma~\ref{lem:elementarycalculus}, we have for $0 < t < \kappa^\nu/(8 B_0 2^\nu C_0^\nu |\omega|)$,
\begin{equation}\label{eq:B4eststsIkkfactorial}
|c_{k}(t,n) - c_{k-1}(t,n)| \le \frac{B_0^{k+1} (4C_0 \kappa^{-1} |\omega| t)^{k}}{k!} \exp \Big( -\frac{\kappa}{4} |n| \Big) \sum_{\alpha \in \mathbb{B}^{(k)}} \prod_j \alpha_j! \; .
\end{equation}
\end{corollary}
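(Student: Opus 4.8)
The plan is to deduce \eqref{eq:B4eststsIkkfactorial} from Lemma~\ref{lem:12system1cestimates} by carrying out the lattice summation on the right-hand side of \eqref{eq:12FourierB4eststsIkk} explicitly, using the elementary bound of Lemma~\ref{lem:elementarycalculus}. Fix $n \in \mathbb{Z}^\nu$ and $0 < t < \kappa^\nu/(8 B_0 2^\nu C_0^\nu |\omega|)$. By Lemma~\ref{lem:12system1cestimates} it suffices to estimate, for each $\alpha \in \mathbb{B}^{(k)}$, the inner sum
\begin{equation*}
S(\alpha,n) := \sum_{\substack{m = (m_1,\dots,m_{k+1}) \in \mathbb{Z}^{(k+1)\nu} \\ m_1 + \dots + m_{k+1} = n}} \ \prod_{j=1}^{k+1} |m_j|^{\alpha_j} \exp\Big( -\frac{\kappa}{2} |m_j| \Big),
\end{equation*}
and then to multiply the resulting bound by $B_0^{k+1}(2|\omega|t)^k/k!$ and sum over $\alpha \in \mathbb{B}^{(k)}$.

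First I would separate off the decay in $n$. Since $\sum_{j=1}^{k+1} |m_j| \ge |m_1 + \dots + m_{k+1}| = |n|$, one may write $\exp\big(-\tfrac{\kappa}{2} \sum_j |m_j|\big) \le \exp\big(-\tfrac{\kappa}{4}|n|\big) \prod_{j=1}^{k+1} \exp\big(-\tfrac{\kappa}{4}|m_j|\big)$; dropping the linear constraint $\sum_j m_j = n$ then decouples the sum over the $k+1$ blocks, giving
\begin{equation*}
S(\alpha,n) \ \le \ \exp\Big( -\frac{\kappa}{4}|n| \Big) \ \prod_{j=1}^{k+1} \ \sum_{m_j \in \mathbb{Z}^\nu} |m_j|^{\alpha_j} \exp\Big( -\frac{\kappa}{4}|m_j| \Big).
\end{equation*}
Each of the $k+1$ factors is now a sum of the type handled by Lemma~\ref{lem:elementarycalculus}: after majorizing $|m_j|^{\alpha_j} \le \big(\sum_\ell |(m_j)_\ell|\big)^{\alpha_j}$ and expanding by the multinomial theorem (which turns the scalar power $\alpha_j$ into a sum over multi-indices of total weight $\alpha_j$ on the $\nu$ coordinates of $m_j$) and replacing $\kappa$ by $\kappa/4$, Lemma~\ref{lem:elementarycalculus} bounds each factor by $\alpha_j!$ times a power of $C_0 \kappa^{-1}$ with exponent $\nu \alpha_j$, up to a harmless factor (a power of absolute constants, of base controlled by $4$) coming from the multinomial expansion and the rescaling.

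It remains to collect constants. Multiplying the $k+1$ factor bounds, the $\alpha_j!$ combine into $\prod_j \alpha_j!$, while, using $\sum_j \alpha_j = k$ from \eqref{eq:12BsetsBBsumalphas}, the powers of $C_0 \kappa^{-1}$ accumulate to total exponent exactly $\nu k$ and the remaining absolute constants accumulate to a $k$-th power; folding in the prefactor $B_0^{k+1}(2|\omega|t)^k/k!$ and the sum over $\alpha \in \mathbb{B}^{(k)}$ produces exactly the right-hand side of \eqref{eq:B4eststsIkkfactorial}, with the constant appearing there (the $1/k!$ is simply retained throughout). The only real work is this last bookkeeping step — one must check that the cumulative power of $\kappa^{-1}$ produced by the $k+1$ block-summations is precisely $\nu k$ (this is the point of $\mathbb{B}^{(k)}$ being designed so that $\sum_j \alpha_j = k$ over $k+1$ slots) and that the various absolute factors of $2$ and $4$ fit into the stated constant per step; everything else — the triangle inequality, dropping the constraint, and the factorization — is routine, and the only place where care is needed is in retaining the $\exp(-\tfrac{\kappa}{4}|m_j|)$ after extracting $\exp(-\tfrac{\kappa}{4}|n|)$, so that each block sum stays convergent.
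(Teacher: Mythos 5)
Your proposal is correct and follows essentially the same route as the paper: starting from Lemma~\ref{lem:12system1cestimates}, splitting $\exp(-\tfrac{\kappa}{2}\sum_j|m_j|)$ to extract $\exp(-\tfrac{\kappa}{4}|n|)$ via $\sum_j|m_j|\ge|n|$, then dropping the constraint and applying Lemma~\ref{lem:elementarycalculus} blockwise together with $\sum_j\alpha_j=k$ from \eqref{eq:12BsetsBBsumalphas}. Your explicit discussion of the multinomial bookkeeping is in fact more detailed than the paper's one-line conclusion, and the absolute-constant slack you flag is exactly what the stated factor $4^{\nu+1}$ (and the later constant $C_1$) is there to absorb.
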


\begin{proof}
Due to Lemma~\ref{lem:12system1cestimates}, we have
\begin{align*}
|c_{k}(t,n) & - c_{k-1}(t,n)| \le \frac{B_0^{k+1}(2 |\omega|t)^{k}}{k!} \sum_{m = (m_1, \dots, m_{k+1}) \in \mathbb{Z}^{(k+1)\nu} : \sum_j m_j = n} \sum_{\alpha \in \mathbb{B}^{(k)}} \prod_j |m_j|^{\alpha_j} \exp \Big( -\frac{\kappa}{2} |m_j| \Big) \\
& \le \frac{B_0^{k+1}(2 |\omega|t)^{k}}{k!} \exp \Big( -\frac{\kappa}{4} |n| \Big) \sum_{m = (m_1, \dots, m_{k+1}) \in \mathbb{Z}^{(k+1)\nu} : \sum_j m_j = n} \sum_{\alpha \in \mathbb{B}^{(k)}} \prod_j |m_j|^{\alpha_j} \exp \Big( -\frac{\kappa}{4} |m_j| \Big)\\
& \le \frac{B_0^{k+1}(2 |\omega|t)^{k}}{k!} \exp \Big( -\frac{\kappa}{4} |n| \Big) \sum_{m = (m_1, \dots, m_{k+1}) \in \mathbb{Z}^{(k+1)\nu} : |m| \ge n} \sum_{\alpha \in \mathbb{B}^{(k)}} \prod_j |m_j|^{\alpha_j} \exp \Big( -\frac{\kappa}{4} |m_j| \Big).
\end{align*}
Combining this estimate with $(3)$ in Lemma~\ref{lem:elementarycalculus} and with \eqref{eq:12BsetsBBsumalphas}, we obtain the statement.
\end{proof}

\begin{remark}\label{rem.cont345}
We now need to estimate the sum on the right-hand side of \eqref{eq:B4eststsIkkfactorial}. For a combinatorial argument related to this task, we need to introduce the following mappings. Let $N$, $\ell$ be arbitrary. Set $\mathfrak{A}_N(\ell) = \{\alpha = (\alpha_1, \dots, \alpha_N) \in \mathbb{Z}^N : \alpha_j \ge 0, \quad \sum_j \alpha_j = \ell \}$. Given $\alpha = (\alpha_1, \dots, \alpha_N) \in \mathbb{Z}^N$ with $\alpha_j \ge 0$, set $\mathcal{S}(\alpha) = \{ i : \alpha_i > 0 \}$, $\mathcal{E}(\alpha) = \{ j : \alpha_j = \min_{\mathcal{S}(\alpha)} \alpha_i \}$. We enumerate $\mathcal{E}(\alpha)$ in increasing order: $\mathcal{E}(\alpha)  = \{ j_1(\alpha) < \cdots \}$. Furthermore, set $\Phi(\alpha) = (\phi_1(\alpha), \dots, \phi_N(\alpha))$, where $\phi_j(\alpha) = \alpha_j$ if $j \neq j_1(\alpha)$, $\phi_{j_1(\alpha)}(\alpha) = \alpha_{j_1(\alpha)} - 1$.
\end{remark}

\begin{lemma}\label{lem:alphamapred}
$0$. $\Phi$ maps $\mathfrak{A}_N(\ell)$ into $\mathfrak{A}_N(\ell-1)$.

\smallskip
$1$. $\phi_{j_1(\alpha)}(\alpha) < \min_{j : \phi_j(\alpha) > 0, \; j \neq j_1(\alpha)} \phi_j(\alpha)$. Here, by convention,
minimum over an empty set is set to be $+\infty$.

\smallskip
$2$. If $\Phi(\alpha) = \Phi(\alpha')$, then $\alpha_j = \alpha'_j$ for $j \notin \{j_1(\alpha), j_1(\alpha')\}$.

\smallskip
$3$. If $\Phi(\alpha) = \Phi(\alpha')$ and $j_1(\alpha) = j_1(\alpha')$, then $\alpha = \alpha'$.

\smallskip
$4$. For any $\beta$, $\mathrm{card}( \Phi^{-1}(\beta) ) \le N$.

\smallskip
$5$. If $\Phi(\alpha) = \Phi(\alpha')$ and $\alpha_{j_1(\alpha)} > 1$, $\alpha'_{j_1(\alpha')} > 1$, then $\alpha = \alpha'$.
\end{lemma}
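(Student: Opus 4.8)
The whole lemma is essentially an exercise in unwinding the definitions of $\mathcal{S}(\alpha)$, $\mathcal{E}(\alpha)$, $j_1(\alpha)$ and $\Phi$, and the six items can be proved in the order they are listed, each one feeding the next. For item $0$, if $\alpha\in\mathfrak{A}_N(\ell)$ with $\ell\ge 1$ then $\mathcal{S}(\alpha)\ne\emptyset$, so $j_1(\alpha)$ is defined and $\alpha_{j_1(\alpha)}\ge 1$; hence $\phi_{j_1(\alpha)}(\alpha)=\alpha_{j_1(\alpha)}-1\ge 0$, the other coordinates are untouched and nonnegative, and the coordinate sum drops by exactly $1$. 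For item $1$, write $m=\min_{i\in\mathcal{S}(\alpha)}\alpha_i=\alpha_{j_1(\alpha)}$; any $j\ne j_1(\alpha)$ with $\phi_j(\alpha)=\alpha_j>0$ lies in $\mathcal{S}(\alpha)$, so $\alpha_j\ge m$ and therefore $\phi_j(\alpha)=\alpha_j\ge m>m-1=\phi_{j_1(\alpha)}(\alpha)$ (the inequality being vacuous if that index set is empty). Item $2$ is immediate: off $\{j_1(\alpha),j_1(\alpha')\}$ the map $\Phi$ changes nothing, so $\Phi(\alpha)=\Phi(\alpha')$ forces $\alpha_j=\alpha'_j$ there. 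Item $3$ adds to item $2$ the single remaining coordinate $j_0:=j_1(\alpha)=j_1(\alpha')$, where equality of the $j_0$-th entries of $\Phi(\alpha)$ and $\Phi(\alpha')$ reads $\alpha_{j_0}-1=\alpha'_{j_0}-1$. Item $4$ follows from item $3$: on a fixed fiber $\Phi^{-1}(\beta)$ the assignment $\alpha\mapsto j_1(\alpha)\in\{1,\dots,N\}$ is injective, so $\mathrm{card}(\Phi^{-1}(\beta))\le N$.

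\textbf{The crux, item $5$.} Assume $\Phi(\alpha)=\Phi(\alpha')$ with $\alpha_{j_1(\alpha)}>1$ and $\alpha'_{j_1(\alpha')}>1$. I would show $j_1(\alpha)=j_1(\alpha')$, after which item $3$ finishes. Suppose for contradiction that $j_0:=j_1(\alpha)\ne j_1(\alpha')=:j_0'$, and put $m:=\alpha_{j_0}\ge 2$, $m':=\alpha'_{j_0'}\ge 2$. Reading $\Phi(\alpha)=\Phi(\alpha')$ at coordinate $j_0$ (decremented in $\alpha$, untouched in $\alpha'$ since $j_0\ne j_0'$) gives $\alpha'_{j_0}=m-1\ge 1$; reading it at $j_0'$ gives $\alpha_{j_0'}=m'-1\ge 1$. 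Now minimality of $m$ among the positive entries of $\alpha$, applied to the positive entry $\alpha_{j_0'}$, yields $m'-1=\alpha_{j_0'}\ge m$; symmetrically $m-1=\alpha'_{j_0}\ge m'$. Adding these two inequalities gives $m+m'-2\ge m+m'$, i.e. $-2\ge 0$, a contradiction. Hence $j_0=j_0'$, and $\alpha=\alpha'$ by item $3$.

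\textbf{Main obstacle.} The only point that needs an idea rather than bookkeeping is the contradiction in item $5$: the hypothesis that the minimal positive entry exceeds $1$ is precisely what keeps the decremented coordinate strictly positive, which is what licenses invoking minimality of the \emph{other} vector's smallest positive entry and thereby closing the cyclic pair of strict inequalities $m'\ge m+1$ and $m\ge m'+1$. Everything else is a direct reading-off from the definitions, so the write-up should be short.
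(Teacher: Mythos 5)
Your proof is correct and follows essentially the same route as the paper: items $0$--$4$ by direct unwinding of the definitions with $3$ built on $2$ and $4$ on $3$, and item $5$ by showing $j_1(\alpha)=j_1(\alpha')$ through the same cyclic pair of incompatible inequalities (the paper phrases it as $\beta_{j_1(\alpha)}<\beta_{j_1(\alpha')}$ and $\beta_{j_1(\alpha')}<\beta_{j_1(\alpha)}$ via item $1$, you derive the equivalent $m'-1\ge m$ and $m-1\ge m'$ directly from minimality) before invoking item $3$. No gaps.
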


\begin{proof}
$0$. This follows from the definition of the map $\Phi$.

$1$. We have
\begin{align*}
\phi_{j_1(\alpha)}(\alpha) & = \alpha_{j_1(\alpha)} - 1 = \Big( \min_{j : \alpha_j > 0} \alpha_j \Big) - 1 \le \Big( \min_{j: \alpha_j > 0, \, j \neq j_1(\alpha)} \alpha_j \Big) - 1 \\
& = \min_{j : \phi_j(\alpha) > 0, \, j \neq j_1(\alpha)} \phi_j(\alpha) - 1 < \min_{j : \phi_j(\alpha) > 0, \, j \neq j_1(\alpha)} \phi_j(\alpha).
\end{align*}

$2$. This follows from the definition of $\Phi$.

$3$. Assume $\Phi(\alpha) = \Phi(\alpha')$ and $j_1(\alpha) = j_1(\alpha')$. Due to part $(2)$ of the current lemma, we have $\alpha_j = \alpha'_j$ for $j \notin \{j_1(\alpha)\}$. Furthermore, $\alpha_{j_1(\alpha)} = \phi_{j_1(\alpha)}(\alpha) + 1 = \phi_{j_1(\alpha')}(\alpha') + 1 = \alpha_{j_1(\alpha')}$. Thus, $\alpha = \alpha'$.

$4$. This follows from part $3$ of the current lemma.

$5$. Assume $\Phi(\alpha) = \Phi(\alpha') =: \beta$ and $\alpha_{j_1(\alpha)} > 1$, $\alpha'_{j_1(\alpha')} > 1$. Assume that $j_1(\alpha) \neq j_1(\alpha')$. Then due to part $1$ of the current lemma, we have $\beta_{j_1(\alpha)} < \min_{j : \beta_j > 0, \, j \neq j_1(\alpha)} \beta_j$. Note that $\beta_{j_1(\alpha')} = \alpha'_{j_1(\alpha')} - 1 > 0$. Since we assume $j_1(\alpha) \neq j_1(\alpha')$, we conclude that $\beta_{j_1(\alpha)} < \beta_{j_1(\alpha')}$. Similarly, $\beta_{j_1(\alpha')} < \beta_{j_1(\alpha)}$, which is obviously impossible. Thus,  $j_1(\alpha) = j_1(\alpha')$. Now the statement follows from part $3$ of the current lemma.
\end{proof}

Now everything is ready for us to finalize our estimation method.

\begin{lemma}\label{lem:PexpansionsumestA}
$1$. For any $\ell \le N$, we have
$$
\sum_{\alpha = (\alpha_1, \dots, \alpha_N) \in \mathfrak{A}_N(\ell)} \prod_i \alpha_{i}! \le (\ell + N) \sum_{\alpha = (\alpha_1, \dots, \alpha_N) \in \mathfrak{A}_N(\ell-1)} \prod_i \alpha_{i}! \; .
$$

$2$. We have
$$
\sum_{\alpha = (\alpha_1, \dots, \alpha_N) \in \mathfrak{A}_N(\ell)} \prod_i \alpha_{i}! < (2N)^\ell.
$$
\end{lemma}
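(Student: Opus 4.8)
The plan is to deduce part~$1$ by organizing the sum over $\mathfrak{A}_N(\ell)$ according to the reduction map $\Phi$ of Lemma~\ref{lem:alphamapred}, and then to obtain part~$2$ by iterating part~$1$ all the way down to $\mathfrak{A}_N(0)$.

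For part~$1$ (where I read the hypothesis as $1 \le \ell \le N$, so that $\Phi$ is defined on $\mathfrak{A}_N(\ell)$), I would first record the exact identity
$$
\prod_i \alpha_i! = \alpha_{j_1(\alpha)} \prod_i (\Phi(\alpha))_i!, \qquad \alpha \in \mathfrak{A}_N(\ell),
$$
which holds because $\Phi(\alpha)$ differs from $\alpha$ only in the coordinate $j_1(\alpha)$, where it is smaller by one. Summing over $\alpha \in \mathfrak{A}_N(\ell)$ and collecting terms by the value $\beta := \Phi(\alpha) \in \mathfrak{A}_N(\ell-1)$ (part~$0$ of Lemma~\ref{lem:alphamapred}) gives
$$
\sum_{\alpha \in \mathfrak{A}_N(\ell)} \prod_i \alpha_i! = \sum_{\beta \in \mathfrak{A}_N(\ell-1)} \Big( \prod_i \beta_i! \Big) \sum_{\alpha \in \Phi^{-1}(\beta)} \alpha_{j_1(\alpha)} .
$$
To finish, I would bound the inner sum: by part~$5$ at most one preimage $\alpha$ of $\beta$ has $\alpha_{j_1(\alpha)} > 1$ (and for that one $\alpha_{j_1(\alpha)} \le \ell$, since its entries are nonnegative and sum to $\ell$), while each of the remaining preimages contributes exactly $1$; since by part~$4$ there are at most $N$ preimages in all, the inner sum is at most $\ell + (N-1) < \ell + N$. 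This yields part~$1$ — indeed with the slightly sharper constant $\ell + N - 1$.

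For part~$2$, I would iterate this sharper inequality for $\ell = N, N-1, \dots, 1$ to get
$$
\sum_{\alpha \in \mathfrak{A}_N(N)} \prod_i \alpha_i! \le \Big( \prod_{\ell=1}^{N} (\ell + N - 1) \Big) \sum_{\alpha \in \mathfrak{A}_N(0)} \prod_i \alpha_i! = \prod_{\ell=1}^{N} (\ell + N - 1),
$$
using $\mathfrak{A}_N(0) = \{(0,\dots,0)\}$ and $0!\cdots 0! = 1$. The product $\prod_{\ell=1}^N (\ell + N - 1) = N(N+1)\cdots(2N-1)$ consists of $N$ positive factors, each strictly smaller than $2N$, hence is strictly less than $(2N)^N$, which is part~$2$.

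The genuine combinatorial content — that $\Phi$ has fibers of size at most $N$ and that a fiber carries at most one ``heavy'' distinguished coordinate — has already been extracted in Lemma~\ref{lem:alphamapred}, so there is no real obstacle left; the one point requiring care is that one must combine parts~$4$ and~$5$ rather than just part~$4$ (the crude bound $|\Phi^{-1}(\beta)| \le N$ together with $\alpha_{j_1(\alpha)} \le \ell$ would only give the useless factor $N\ell$), and then simply telescope.
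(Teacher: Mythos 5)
Your proof is correct and takes essentially the same route as the paper: both rest on the identity $\prod_i \alpha_i! = \alpha_{j_1(\alpha)} \prod_i \phi_i(\alpha)!$ together with parts $0$, $4$ and $5$ of Lemma~\ref{lem:alphamapred}, the only cosmetic difference being that you group the sum by the fibers of $\Phi$ (obtaining the marginally sharper factor $\ell+N-1$), whereas the paper splits $\mathfrak{A}_N(\ell)$ according to whether $\alpha_{j_1(\alpha)}>1$ and bounds the two pieces by $\ell$ and $N$ times the sum over $\mathfrak{A}_N(\ell-1)$, respectively. Your explicit telescoping down to $\mathfrak{A}_N(0)$ for part $2$ is exactly what the paper leaves implicit.
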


\begin{proof}
$1$. Note that
\begin{align}\label{eq:Pexpansionsumtatementest1aNNN}
\sum_{\alpha = (\alpha_1, \dots, \alpha_N) \in \mathfrak{A}_N(\ell)} \prod_i \alpha_{i}! & = \sum_{\alpha = (\alpha_1, \dots, \alpha_N) \in \mathfrak{A}_N(\ell)} \alpha_{j_1(\alpha)} (\alpha_{j_1(\alpha)} - 1)! \prod_{i \neq j_1(\alpha)} \alpha_{i}! \\
& = \sum_{\alpha = (\alpha_1, \dots, \alpha_N) \in \mathfrak{A}_N(\ell)} \alpha_{j_1(\alpha)} \prod_{i} \phi_i(\alpha)! \; .
\end{align}

Recall that due to Lemma~\ref{lem:alphamapred}, $\Phi$ maps $\mathfrak{A}_N(\ell)$ into $\mathfrak{A}_N(\ell-1)$. Let $\mathfrak{A}'_N(\ell) = \{ \alpha \in \mathfrak{A}_N(\ell) : \alpha_{j_1(\alpha)} > 1 \}$, $\mathfrak{A}''_N(\ell) = \mathfrak{A}_N(\ell) \setminus \mathfrak{A}'_N(\ell)$. Recall also that due to Lemma~\ref{lem:alphamapred}, $\Phi$ is injective on $\mathfrak{A}'_N(\ell)$ and $\mathrm{card}( \Phi^{-1}(\beta) ) \le N$ for any $\beta$. Hence, due to the identity above, we have 
\begin{align*}
\sum_{\alpha = (\alpha_1, \dots, \alpha_N) \in \mathfrak{A}_N(\ell)} \prod_i \alpha_{i}! & = \sum_{\alpha = (\alpha_1, \dots, \alpha_N) \in \mathfrak{A}_N(\ell)} \alpha_{j_1(\alpha)} \prod_{i} \phi_i(\alpha)! \\
& = \sum_{\alpha \in \mathfrak{A}'_N(\ell)} \alpha_{j_1(\alpha)} \prod_{i} \phi_i(\alpha)! + \sum_{\alpha \in \mathfrak{A}''_N(\ell)} \alpha_{j_1(\alpha)} \prod_{i} \phi_i(\alpha)! \\
& = \sum_{\alpha \in \mathfrak{A}'_N(\ell)} \alpha_{j_1(\alpha)} \prod_{i} \phi_i(\alpha)! + \sum_{\alpha \in \mathfrak{A}''_N(\ell)} \prod_{i} \phi_i(\alpha)! \\
& \le \ell \sum_{\beta \in \mathfrak{A}_N(\ell-1)} \prod_{i} \beta_i! + N \sum_{\beta \in \mathfrak{A}_N(\ell-1)} \prod_{i} \beta_i! \; ,
\end{align*}
as claimed.

$2$. This follows from part $1$.
\end{proof}

\begin{corollary}\label{cor:cestimatesBkRE}
With the constant $C_0$ from Lemma~\ref{lem:elementarycalculus}, we have for $0 < t < \kappa^\nu/(8 B_0 2^\nu C_0^\nu |\omega|)$,
$$
|c_{k}(t,n) - c_{k-1}(t,n)| \le B_0^{k+1}(4^{\nu+1} C_1 \kappa^{-\nu} |\omega| t)^{k} \exp \Big( -\frac{\kappa}{4} |n| \Big),
$$
where $C_1 > C_0$ is an absolute constant.
\end{corollary}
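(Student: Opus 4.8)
The plan is to feed the estimate of Corollary~\ref{cor:12system1cestimatesBk} into Lemma~\ref{lem:PexpansionsumestA}: everything has already been reduced to bounding the purely combinatorial factor $\frac{1}{k!}\sum_{\alpha \in \mathbb{B}^{(k)}} \prod_j \alpha_j!$ by a geometric term $C^{k}$, so the proof is essentially that estimate plus bookkeeping, with only the exponent $\tfrac{\kappa}{4}|n|$ carried along unchanged.

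First I would observe that $\mathbb{B}^{(k)} \subseteq \mathfrak{A}_{k+1}(k)$. Indeed, an immediate induction on the defining relation for $\mathbb{B}^{(k)}$ shows $\mathbb{B}^{(k)} \subseteq \mathbb{Z}^{k+1}$ with nonnegative entries, while $\sum_j \alpha_j = k$ for every $\alpha \in \mathbb{B}^{(k)}$ by \eqref{eq:12BsetsBBsumalphas}. Hence $\sum_{\alpha \in \mathbb{B}^{(k)}} \prod_j \alpha_j! \le \sum_{\alpha \in \mathfrak{A}_{k+1}(k)} \prod_j \alpha_j!$. Then I would apply part $1$ of Lemma~\ref{lem:PexpansionsumestA} iteratively with $N = k+1$ and $\ell = k, k-1, \dots, 1$ (the hypothesis $\ell \le N$ holds throughout), using $\sum_{\alpha \in \mathfrak{A}_{k+1}(0)} \prod_j \alpha_j! = 1$ as the base case. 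This telescopes to
$$
\sum_{\alpha \in \mathfrak{A}_{k+1}(k)} \prod_j \alpha_j! \le \prod_{\ell=1}^{k}(\ell + k + 1) = (k+2)(k+3)\cdots(2k+1) = \frac{(2k+1)!}{(k+1)!},
$$
and dividing by $k!$ turns this into a binomial coefficient,
$$
\frac{1}{k!}\sum_{\alpha \in \mathbb{B}^{(k)}} \prod_j \alpha_j! \le \frac{(2k+1)!}{k!\,(k+1)!} = \binom{2k+1}{k} < 2^{2k+1} = 2 \cdot 4^{k}.
$$

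Plugging this into Corollary~\ref{cor:12system1cestimatesBk} would give
$$
|c_k(t,n) - c_{k-1}(t,n)| \le 2 \cdot 4^{k}\, B_0^{k+1}\big(4^{\nu+1} C_0 \kappa^{-\nu}|\omega| t\big)^{k} \exp\Big(-\tfrac{\kappa}{4}|n|\Big),
$$
and since $2 \cdot 4^{k} \le 8^{k}$ for $k \ge 1$, the right-hand side is at most $B_0^{k+1}\big(4^{\nu+1} C_1 \kappa^{-\nu}|\omega| t\big)^{k}\exp(-\tfrac{\kappa}{4}|n|)$ with $C_1 := 8 C_0$, an absolute constant satisfying $C_1 > C_0$ — exactly the claimed bound. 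I do not anticipate a genuine obstacle, since this is a corollary of the preceding lemmas; the only point needing a little care is arranging the telescoping in Lemma~\ref{lem:PexpansionsumestA} so that after dividing by $k!$ one lands on a central-binomial-type bound $2\cdot 4^{k}$ rather than something of size $k^{k}$, which would force a dimension-dependent constant or an appeal to Stirling. A slightly lossier variant — enlarging $\mathfrak{A}_{k+1}(k)$ to $\mathfrak{A}_{k+1}(k+1)$ and quoting part $2$ of Lemma~\ref{lem:PexpansionsumestA} directly — also works, at the cost of using $k! \ge (k/e)^{k}$ to absorb the resulting $(2(k+1))^{k+1}/k!$.
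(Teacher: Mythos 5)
Your proposal is correct and follows the paper's strategy: plug Corollary~\ref{cor:12system1cestimatesBk} into the combinatorial bound of Lemma~\ref{lem:PexpansionsumestA} and absorb the resulting geometric factor into the constant $C_1$. The only difference is in the last step: the paper simply quotes part~$2$ of Lemma~\ref{lem:PexpansionsumestA} (the bound $(2N)^N$) together with Stirling's formula to cancel the $1/k!$, whereas you iterate part~$1$ with $N=k+1$, $\ell=k,\dots,1$, obtaining $\sum_{\alpha\in\mathbb{B}^{(k)}}\prod_j\alpha_j!\le (2k+1)!/(k+1)!$ and hence the cleaner bound $\frac{1}{k!}\sum_{\alpha\in\mathbb{B}^{(k)}}\prod_j\alpha_j!\le\binom{2k+1}{k}<2\cdot 4^k$; this avoids Stirling entirely and gives the explicit value $C_1=8C_0$, at the small cost of having to note $\mathbb{B}^{(k)}\subseteq\mathfrak{A}_{k+1}(k)$ via \eqref{eq:12BsetsBBsumalphas}, which you do. Your ``lossier variant'' at the end is exactly the paper's argument, so the two routes are interchangeable and both yield the stated estimate.
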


\begin{proof}
Due to Corollary~\ref{cor:12system1cestimatesBk} we have for $0 < t < \kappa^\nu/(8 B_0 2^\nu C_0^\nu |\omega|)$,
\begin{equation}\label{eq:B4eststsIkkfactorialInv1}
|c_{k}(t,n) - c_{k-1}(t,n)| \le \frac{B_0^{k+1} (4^{\nu+1} C_0^\nu \kappa^{-\nu} |\omega| t)^{k}}{k!} \exp \Big( -\frac{\kappa}{4} |n| \Big) \sum_{\alpha \in \mathbb{B}^{(k)}} \prod_j \alpha_j! \; .
\end{equation}
Recall that for any $\alpha \in \mathbb{B}^{(k)}$ holds
\begin{equation}\nn
\alpha\in \mathbb{R}^{k+1}, \quad \sum_j \alpha_j = k.
\end{equation}
Therefore, due to part $2$ of Lemma~\ref{lem:PexpansionsumestA} with $N=k+1$, $\ell=k$ we have also
\begin{equation}\label{summationbasic}
\sum_{\alpha \in \mathbb{B}^{(k)}} \prod_j \alpha_j!\le \sum_{\alpha = (\alpha_1, \dots, \alpha_N) \in \mathfrak{A}_N(N)} \prod_i \alpha_{i}! < (2N)^k.
\end{equation}
Due to Stirling's formula,
\[
k! \gtrsim k^k e^{-k},\quad (k!)^{-1}(2N)^k\lesssim (2e)^{k},
\]
and the statement follows.
\end{proof}

\section{Proof of the Main Results}\label{sec.3}

We first prove Theorem~A. We derive the existence of the solution in Theorem~A in a straightforward way from Corollary~\ref{cor:cestimatesBkRE}, combined with Lemma~\ref{lem:12Fourier2}. The proof of the uniqueness requires some extra work. We start with some auxiliary statements needed in the proof of the uniqueness of the solution.

The following lemma and its proof are well known.

\begin{lemma}\label{lem:12Fourier1U}
Let $\alpha_n \in \mathbb{R}$, $h(n) \in \mathbb{C}$, $n \in \mathbb{Z}^\nu$. Assume that
\begin{itemize}

\item[(a)] $\alpha_m \neq \alpha_n$, unless $m = n$,

\item[(b)] $\sum_{n  \in \mathbb{Z}^\nu} |h(n)| < \infty$.

\end{itemize}
If
$$
\sum_n h(n) \, e^{i \alpha_n x} = 0
$$
for all $x \in \mathbb{R}$, then $h(n) = 0$ for all $n \in \mathbb{Z}^\nu$.
\end{lemma}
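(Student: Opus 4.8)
The plan is to recover each coefficient $h(m)$ by the classical Bohr mean-value trick: multiply the given identity by $e^{-i\alpha_m x}$ and average over $x \in [-T,T]$, then let $T \to \infty$. Fix $m \in \mathbb{Z}^\nu$ and set
$$
a_n(T) = \frac{1}{2T} \int_{-T}^{T} e^{i(\alpha_n - \alpha_m) x} \, dx, \qquad T > 0.
$$
For $n = m$ one has $a_m(T) = 1$, while for $n \neq m$ assumption (a) gives $\beta := \alpha_n - \alpha_m \neq 0$ and $a_n(T) = \sin(\beta T)/(\beta T)$, so that $|a_n(T)| \le 1$ for every $n$ and $T$, and $a_n(T) \to \delta_{n,m}$ as $T \to \infty$.

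First I would observe that, by (b) and the Weierstrass $M$-test, the series $\sum_n h(n) e^{i(\alpha_n-\alpha_m)x}$ converges uniformly in $x \in \mathbb{R}$; since it is identically zero by hypothesis, for each fixed $T$ I may integrate it term by term over $[-T,T]$, obtaining
$$
0 = \frac{1}{2T}\int_{-T}^{T} \Big( \sum_n h(n)\, e^{i(\alpha_n - \alpha_m)x} \Big)\, dx = \sum_n h(n)\, a_n(T).
$$
Next I would let $T \to \infty$ in this last series. Since $|h(n)\, a_n(T)| \le |h(n)|$ with $\sum_n |h(n)| < \infty$ by (b), the dominated convergence theorem applied to counting measure on $\mathbb{Z}^\nu$ lets me pass the limit inside the sum, so that
$$
0 = \lim_{T\to\infty} \sum_n h(n)\, a_n(T) = \sum_n h(n) \lim_{T\to\infty} a_n(T) = \sum_n h(n)\, \delta_{n,m} = h(m).
$$
As $m$ was arbitrary, $h(n) = 0$ for all $n \in \mathbb{Z}^\nu$.

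There is no real obstacle here; the only points needing a word are the two interchanges (sum versus integral for fixed $T$, sum versus limit as $T \to \infty$), and both are immediate from the absolute summability hypothesis (b) together with the uniform bound $|a_n(T)| \le 1$ and the elementary limit $a_n(T) \to \delta_{n,m}$, the latter using the separation hypothesis (a). One could instead phrase the argument via mean values of almost periodic functions, but the bare-hands computation above is the shortest self-contained route.
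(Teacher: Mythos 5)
Your proposal is correct and follows essentially the same route as the paper: multiply by $e^{-i\alpha_m x}$, take the Bohr mean value $\lim_{T\to\infty}\frac{1}{2T}\int_{-T}^{T}$, and use (a) and (b) to justify extracting $h(m)$. Your explicit justification of the two interchanges (uniform convergence for term-by-term integration, dominated convergence in $T$) simply spells out what the paper summarizes in its final sentence.
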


\begin{proof}
One has
$$
\lim_{A \rightarrow \infty} \frac{1}{2A} \int_{-A}^{A} e^{i \beta x} \, dx = \begin{cases} 0 \quad \text{if $\beta \neq 0$}, \\
1 \quad \text{if $\beta = 0$}.
\end{cases}
$$
Therefore, for each $m \in \mathbb{Z}^\nu$,
$$
0 = \lim_{A \rightarrow \infty} \frac{1}{2A} \int_{-A}^{A} \sum_{n  \in \mathbb{Z}^\nu} h(n) e^{i \alpha_n x} e^{-i \alpha_m x} \, dx = \sum_{n \in \mathbb{Z}^\nu} h(n)
\lim_{A \rightarrow \infty} \frac{1}{2A} \int_{-A}^{A} e^{i (\alpha_n - \alpha_m) x} \, dx = h(m).
$$
The correctness of the calculation here is due to conditions $(a)$ and $(b)$.
\end{proof}

The lemma we just proved helps us to show that under very natural conditions, the KdV equation implies a system of integral equations for the Fourier coefficients of the solution.

\begin{lemma}\label{lem:12Fourier2U}
Assume that $v$ obeys the KdV equation
\begin{equation}\label{eq:KdVU1}
\partial_t v + \partial^3_x v + v\partial_x v = 0.
\end{equation}
Assume also that the following expansion holds,
$$
v(t,x) = \sum_{n \in \mathbb{Z}^\nu} h(t,n) \exp(i x n \omega),
$$
where $\omega \in \mathbb{R}^\nu$ is such that $\omega n\neq 0$ for every $n \not= 0$, and the Fourier coefficients $h(t,n)$ obey
$$
\sum_{n \in \mathbb{Z}^\nu} |h(t,n)| |n|^3 < \infty.
$$
Then, the following integral equations hold:
$$
h(t,n) = h(0,n) e^{i t (n\omega)^3} -\sum_m \int_0^t h(\tau,n-m) h(\tau,m) (i m \omega) e^{i(t-\tau) (n\omega)^3} \, d\tau, \quad n \in \mathbb{Z}^\nu.
$$
\end{lemma}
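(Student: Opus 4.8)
The plan is to substitute the Fourier expansion of $v$ into the KdV equation, extract the equation satisfied by each Fourier coefficient $h(t,n)$, and then solve the resulting linear inhomogeneous ODE in $t$ by the variation-of-constants (Duhamel) formula. The hypothesis $\sum_n |h(t,n)||n|^3 < \infty$ is exactly what is needed to justify term-by-term differentiation, so that all the formal manipulations are legitimate.

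First I would compute $\partial_x^3 v$ and $v\partial_x v$ in terms of the expansion. One has $\partial_x^3 v = \sum_n h(t,n)(in\omega)^3 e^{ixn\omega} = -i\sum_n (n\omega)^3 h(t,n) e^{ixn\omega}$, and for the nonlinear term, using absolute convergence to multiply the two series,
$$
v\partial_x v = \Big(\sum_{n} h(t,n)e^{ixn\omega}\Big)\Big(\sum_m h(t,m)(im\omega)e^{ixm\omega}\Big) = \sum_{n} \Big(\sum_m h(t,n-m)h(t,m)(im\omega)\Big) e^{ixn\omega}.
$$
Likewise $\partial_t v = \sum_n \partial_t h(t,n)\, e^{ixn\omega}$. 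Since $\omega n \neq 0$ for $n\neq 0$, the exponentials $e^{ixn\omega}$ with distinct $n$ are linearly independent in the sense of Lemma~\ref{lem:12Fourier1U}; hence equating Fourier coefficients in \eqref{eq:KdVU1} gives, for each $n\in\mathbb{Z}^\nu$,
$$
\partial_t h(t,n) = i(n\omega)^3 h(t,n) - \sum_m h(t,n-m)h(t,m)(im\omega).
$$

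Next I would treat this as a scalar linear ODE $\partial_t h(t,n) = i(n\omega)^3 h(t,n) + g(t,n)$ with forcing term $g(t,n) = -\sum_m h(t,n-m)h(t,m)(im\omega)$, which is continuous in $t$ by the summability hypothesis. Multiplying by the integrating factor $e^{-it(n\omega)^3}$ and integrating from $0$ to $t$ yields
$$
h(t,n) = h(0,n)e^{it(n\omega)^3} + \int_0^t e^{i(t-\tau)(n\omega)^3} g(\tau,n)\, d\tau = h(0,n)e^{it(n\omega)^3} - \sum_m \int_0^t h(\tau,n-m)h(\tau,m)(im\omega)e^{i(t-\tau)(n\omega)^3}\, d\tau,
$$
which is the claimed identity; interchanging the sum over $m$ with the $\tau$-integral is again justified by the summability hypothesis (the integrand is dominated uniformly in $\tau$ on $[0,t]$).

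The main obstacle, such as it is, is purely one of bookkeeping rather than ideas: one must make sure that every interchange of summation, differentiation, and integration is licensed, and in particular that the product series for $v\partial_x v$ can be rearranged into the convolution form with the stated $n$-indexed coefficients. All of this follows from the absolute convergence guaranteed by $\sum_n |h(t,n)||n|^3 < \infty$ (which dominates both $\sum_n |h(t,n)|$ and $\sum_n |h(t,n)||n|$), together with continuity of $h(t,n)$ in $t$; the argument is otherwise the standard Duhamel reduction already used implicitly in Lemma~\ref{lem:12Fourier2}.
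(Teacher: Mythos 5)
There is a genuine gap at the very first step of your argument: the identity $\partial_t v(t,x) = \sum_n \partial_t h(t,n)\, e^{i x n \omega}$, which you need in order to ``equate Fourier coefficients'' in the differential equation \eqref{eq:KdVU1} and obtain the ODE $\partial_t h(t,n) = i(n\omega)^3 h(t,n) - \sum_m h(t,n-m)h(t,m)(i m\omega)$. The hypothesis $\sum_n |h(t,n)|\,|n|^3 < \infty$ controls only the decay in $n$, i.e.\ it licenses termwise differentiation in $x$ up to third order and the convolution rearrangement of $v\partial_x v$; it says nothing about the $t$-regularity of the coefficients. A priori you do not even know that $h(\cdot,n)$ is differentiable in $t$, let alone that the termwise $t$-differentiated series converges to $\partial_t v$ (equivalently, that the Fourier coefficients of $\partial_t v$ are $\partial_t h(t,n)$); the differentiability of $h(\cdot,n)$ is part of what must be \emph{proved}, and your appeal to Lemma~\ref{lem:12Fourier1U} at the level of the differential equation would also require $\sum_n |\partial_t h(t,n)| < \infty$, which is not available. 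Your claim that the summability hypothesis is ``exactly what is needed to justify term-by-term differentiation'' is therefore incorrect as far as the $t$-variable is concerned.

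The paper's proof is structured precisely to avoid this. One first integrates the PDE in time, $v(t,x) = v(0,x) - \int_0^t [\partial^3_x v + v\partial_x v]\, d\tau$, expands $\partial^3_x v + v\partial_x v$ in the frequencies $n\omega$ (this uses only the stated summability), interchanges $\sum_n$ with $\int_0^t d\tau$, and then applies Lemma~\ref{lem:12Fourier1U} to the resulting identity between almost periodic functions of $x$. This yields the integral equation $h(t,n) = h(0,n) - \int_0^t h(\tau,n)(i n\omega)^3\, d\tau - \sum_m \int_0^t h(\tau,n-m)h(\tau,m)(i m\omega)\, d\tau$, from which the $C^1$ regularity of $h(\cdot,n)$ and the ODE follow \emph{a posteriori}; only then does one perform the integrating-factor/Duhamel manipulation, which you carry out correctly (your closing step is equivalent to the paper's multiplication by $e^{i(t-\tau)(n\omega)^3}$ and integration by parts). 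If you wish to keep your route, you must first establish the $t$-differentiability of the coefficients, e.g.\ via the Bohr means $h(t,n) = \lim_{A\to\infty}\frac{1}{2A}\int_{-A}^{A} v(t,x)e^{-i x n\omega}\,dx$, and any honest way of doing that passes through the time-integrated equation, i.e.\ reproduces the paper's argument.
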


\begin{proof}
We have
$$
\partial^\alpha_x v = \sum_n h(t,n) (i n \omega)^\alpha e^{i x n \omega}, \quad \alpha \le 3,
$$
$$
v \partial_x v = \sum_n \big[ \sum_m h(t,n-m) h(t,m) (i m \omega) \big] e^{i x n \omega},
$$
$$
\sum_n |h(t,n) (in\omega)^3| < \infty,
$$
$$
\sum_n \big| \sum_m h(t,n-m) h(t,m) (i m \omega) \big| < \infty.
$$
Using equation \eqref{eq:KdVU1}, we obtain
\begin{align*}
\sum_n h(t,n) e^{i x n \omega} & = v(t,x) \\
& = v(0,x) - \int_0^t [\partial^3_x v(x,\tau) + v(x,\tau) \partial_x v(x,\tau)] \, d\tau \\
& = \sum_n h(0,n) e^{i x n \omega} - \sum_n \Big[ \int_0^t h(\tau,n) (i n \omega)^3 \, d\tau \Big] e^{i x n \omega} \\
& \qquad - \sum_n \Big[ \sum_m \int_0^t h(\tau,n-m) h(\tau,m) (i m \omega) \, d\tau \Big] e^{i x n \omega}.
\end{align*}
Due to Lemma~\ref{lem:12Fourier1U} and the assumption $\omega n\neq 0$ for $n \not= 0$, this implies
\begin{equation}\label{eq:Fourier2UP2}
h(t,n) = h(0,n) - \int_0^t h(\tau,n) (i n \omega)^3 \, d\tau -\sum_m \int_0^t h(\tau,n-m) h(\tau,m) (i m \omega) \, d\tau, \quad n \in \mathbb{Z}^\nu.
\end{equation}
It follows from \eqref{eq:Fourier2UP2} that $\partial_t h(t,n)$ exist and obey
\begin{equation}\label{eq:Fourier2UP3}
\partial_\tau h(\tau,n) = -h(\tau,n) (i n \omega)^3 - \sum_m h(\tau,n-m) h(\tau,m) (i m \omega), \quad n \in \mathbb{Z}^\nu.
\end{equation}
Multiplying both sides of \eqref{eq:Fourier2UP3} by $e^{i(t-\tau) (n\omega)^3}$ and integrating from $\tau = 0$ to $\tau = t$, we obtain
\begin{equation}\label{eq:Fourier2UP8}
\begin{split}
\int_0^t \partial_\tau h(\tau,n) e^{i(t-\tau) (n\omega)^3} \, d\tau = -\int_0^t h(\tau,n) (i n \omega)^3 e^{i (t-\tau) (n\omega)^3} \, d\tau \\
-\sum_m \int_0^t h(\tau,n-m) h(\tau,m) (i m \omega) e^{i(t-\tau) (n\omega)^3} \, d\tau, \quad n \in \mathbb{Z}^\nu.
\end{split}
\end{equation}
Integration by parts on the left-hand side gives
\begin{equation}\label{eq:Fourier2UP9}
\int_0^t \partial_\tau h(\tau,n) e^{i(t-\tau) (n\omega)^3} \, d\tau = h(\tau,n) e^{i(t-\tau)(n\omega)^3} \Big|_{\tau = 0}^{\tau = t} + \int_0^t h(\tau,n) (i (n \omega)^3) e^{i (t-\tau) (n\omega)^3} \, d\tau.
\end{equation}
Combining \eqref{eq:Fourier2UP8} with \eqref{eq:Fourier2UP9} (and noting that $(i n \omega)^3 = - i (n \omega)^3$), we obtain
$$
h(t,n) - h(0,n) e^{i t (n\omega)^3} = -\sum_m \int_0^t h(\tau,n-m) h(\tau,m) (i m \omega) e^{i (t-\tau) (n\omega)^3} \, d\tau, \quad n \in \mathbb{Z}^\nu,
$$
as claimed.
\end{proof}

Now we employ the integral equations to compare the Fourier coefficients of two solutions of the KdV equation which originate
from the same quasi-periodic initial data. To do the estimations we need an a priori exponential decay estimate for the decay
of the coefficients. This is because we invoke the estimation of the sums in terms of the ``new variables'' $\alpha_j$ from Section~\ref{sec.2}, which require the exponential decay. Recall that the exponential decay condition is required in the uniqueness statement in Theorem~A.

\begin{lemma}\label{lem:12Fouriercontract}
Let $c(t,n)$, $h(t,n)$ be functions of $t \in [0,t_0)$, $t_0 > 0$, $n \in \mathbb{Z}^\nu$, which obey $|c(t,n)|, |h(t,n)| \le B \exp(-\rho |n|)$,
$n \in \mathbb{Z}^\nu$, $B,\rho > 0$. Assume that the following equations hold:
\begin{equation}\label{eq:Fouriercontrac1}
\begin{split}
c(t,n) = c(0,n) e^{it(n\omega)^3} - \sum_m \int_0^t c(\tau,n-m) c(\tau,m) (i m \omega) e^{i(t-\tau) (n\omega)^3} \, d \tau, \quad n \in \mathbb{Z}^\nu, \\
h(t,n) = h(0,n) e^{it(n\omega)^3} - \sum_m \int_0^t h(\tau,n-m) h(\tau,m) (i m \omega) e^{i(t-\tau) (n\omega)^3} \, d \tau, \quad n \in \mathbb{Z}^\nu.
\end{split}
\end{equation}
Assume also that $h(0,n) = c(0,n)$ for all $n \in \mathbb{Z}^\nu$. Then, for $k = 1,2,\dots$, we have
\begin{equation}\label{eq:Fouriercontrac2ST}
|h(t,n) - c(t,n)| \le \frac{B^{k+1}(|\omega| t)^{k}}{k!} \sum_{m = (m_1, \dots, m_{k+1}) \in \mathbb{Z}^{(k+1)\nu} : \sum_j m_j = n} \sum_{\alpha \in \mathbb{B}^{(k)}} \prod_j |m_j|^{\alpha_j} \exp \Big( -\rho |m_j| \Big),
\end{equation}
where $\mathbb{B}^{(k)}$ is defined as in Lemma~\ref{lem:12system1cestimates}.
\end{lemma}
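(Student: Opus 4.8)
The plan is to prove the estimate \eqref{eq:Fouriercontrac2ST} by induction on $k$, closely following the proof of Lemma~\ref{lem:12system1cestimates}, with the fixed difference $h-c$ now playing the role that the consecutive iterates $c_k-c_{k-1}$ played there. Put $e(t,n)=h(t,n)-c(t,n)$. Subtracting the two equations in \eqref{eq:Fouriercontrac1} and using the hypothesis $h(0,n)=c(0,n)$ annihilates the linear term, so that
\begin{equation*}
e(t,n)=-\sum_m\int_0^t\big[h(\tau,n-m)h(\tau,m)-c(\tau,n-m)c(\tau,m)\big](im\omega)e^{i(t-\tau)(n\omega)^3}\,d\tau .
\end{equation*}
Since the bracket is invariant under $m\mapsto n-m$, the sum over $m$ can be symmetrized, replacing the weight $(im\omega)$ by $\tfrac{in\omega}{2}$; inserting in addition the factorization $h(\tau,n-m)h(\tau,m)-c(\tau,n-m)c(\tau,m)=e(\tau,n-m)h(\tau,m)+c(\tau,n-m)e(\tau,m)$ and symmetrizing once more yields the single-sum identity
\begin{equation*}
e(t,n)=-\frac{in\omega}{2}\sum_m\int_0^t e(\tau,m)\big[h(\tau,n-m)+c(\tau,n-m)\big]e^{i(t-\tau)(n\omega)^3}\,d\tau .
\end{equation*}

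For the base case $k=1$ I would not use the factorization but keep the quadratic form with the symmetrized weight, estimate $|h(\tau,n-m)h(\tau,m)-c(\tau,n-m)c(\tau,m)|\le 2B^2\exp(-\rho(|n-m|+|m|))$ and $|e^{i(t-\tau)(n\omega)^3}|=1$, and use $|n|=|m_1+m_2|\le|m_1|+|m_2|$ with $m_1=n-m$, $m_2=m$. The factor $2$ from the quadratic estimate cancels the $\tfrac12$ from the symmetrization, and since $\mathbb{B}^{(1)}=\{(1,0),(0,1)\}$ one has $|m_1|+|m_2|=\sum_{\alpha\in\mathbb{B}^{(1)}}|m_1|^{\alpha_1}|m_2|^{\alpha_2}$, so the result is exactly \eqref{eq:Fouriercontrac2ST} for $k=1$.

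For the inductive step, assume \eqref{eq:Fouriercontrac2ST} holds with $k-1$ for all $t\in[0,t_0)$ and all $n$, and start from the single-sum identity. Bound $|e(\tau,m)|$ by the inductive hypothesis and $|h(\tau,n-m)+c(\tau,n-m)|\le 2B\exp(-\rho|n-m|)$ (again the $2$ cancels the $\tfrac12$), and use $|n\omega|\le|\omega||n|$. The $\tau$-integral produces $\int_0^t\tau^{k-1}\,d\tau=t^k/k$, which turns $\tfrac1{(k-1)!}$ into $\tfrac1{k!}$ and $|\omega|^{k-1}$ into $|\omega|^k$; the summation variable $m$ becomes the new index $m_{k+1}$ (via $m_{k+1}=n-m$), so that the $k+1$ summation variables satisfy $\sum_j m_j=n$; and the remaining factor $|n|$ is absorbed by the triangle inequality $|n|=|\sum_j m_j|\le\sum_j|m_j|$, which distributes over the monomials and raises exactly one exponent by one. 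Finally, the recursive structure $\mathbb{B}^{(k)}=\mathbb{B}^{(k-1)}\times\{0\}+\mathbb{I}^{(k)}$ (together with \eqref{eq:12BsetsBBsumalphas}, which keeps track of the homogeneity degree) guarantees that every exponent vector produced this way belongs to $\mathbb{B}^{(k)}$, so the sum reassembles into the right-hand side of \eqref{eq:Fouriercontrac2ST}.

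The step I expect to be the main obstacle is the combinatorial bookkeeping in the inductive step: one has to check that the single extra monomial factor generated by the weight $|n\omega|$ is correctly accommodated by the enlarged index set $\mathbb{B}^{(k)}$ and, in particular, that the numerical constants cancel exactly — which is precisely the point of the two symmetrizations of the sum over $m$ (they collapse the two branches of the factored nonlinearity into a single term and let the $\tfrac12$ absorb the $2$ coming from $|h+c|$). This is essentially the computation already carried out for Lemma~\ref{lem:12system1cestimates}, so the remaining work is routine; the resulting bound then feeds, exactly as in Corollaries~\ref{cor:12system1cestimatesBk}--\ref{cor:cestimatesBkRE} (via Lemmas~\ref{lem:elementarycalculus} and \ref{lem:PexpansionsumestA} and Stirling's formula), into a bound geometric in $k$, forcing $h=c$ on a short time interval.
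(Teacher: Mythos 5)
Your proposal is correct and follows essentially the same route as the paper: symmetrize the quadratic term to replace the weight $(im\omega)$ by $\tfrac{in\omega}{2}$, factor $hh-cc$ through $e=h-c$, and induct using the a priori bound $B\exp(-\rho|\cdot|)$ together with the index sets $\mathbb{B}^{(k)}$. The only (cosmetic) difference is that you merge the two branches $e\cdot h$ and $c\cdot e$ into a single sum by a further change of variables, whereas the paper estimates the two terms separately, each contributing $\tfrac{1}{2}$ of the final bound.
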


\begin{proof}
It is convenient to rewrite \eqref{eq:Fouriercontrac1} as follows,
\begin{equation}\label{eq:Fouriercontrac1a}
\begin{split}
c(t,n) = c(0,n) e^{it(n\omega)^3} - \frac{i n \omega}{2} \sum_{m_1, m_2 \in \mathbb{Z}^\nu : m_1 + m_2 = n} \int_0^t c(\tau,m_1) c(\tau,m_2) e^{i(t-\tau) (n\omega)^3} \, d \tau,\quad n \in \mathbb{Z}^\nu, \\
h(t,n) = h(0,n) e^{it(n\omega)^3} - \frac{i n \omega}{2} \sum_{m_1, m_2 \in \mathbb{Z}^\nu : m_1 + m_2 = n} \int_0^t h(\tau,m_1) h(\tau,m_2) e^{i(t-\tau) (n\omega)^3} \, d \tau, \quad n \in \mathbb{Z}^\nu.
\end{split}
\end{equation}
Subtracting in \eqref{eq:Fouriercontrac1a}, we obtain
\begin{align*}
|h(t,n) - c(t,n)| & \le \frac{|n||\omega|}{2} \sum_{m_1, m_2 \in \mathbb{Z}^\nu : m_1 + m_2 = n} \int_0^t |h(\tau,m_1) h(\tau,m_2) - c(\tau,m_1) c(\tau,m_2)| \, d \tau \\
& \le B^2 t |\omega| \sum_{m_1, m_2 \in \mathbb{Z}^\nu : m_1 + m_2 = n} \Big| \sum_j m_j \Big| \exp(-\rho(|m_1| + |m_2|)).
\end{align*}
So, \eqref{eq:Fouriercontrac2ST} holds for $k=1$.

Assume \eqref{eq:Fouriercontrac2ST} holds for $k-1$. We have
\begin{align*}
|h & (t,n) - c(t,n)| \le \frac{|n| |\omega|}{2} \sum_{m_1, m_2 \in \mathbb{Z}^\nu : m_1 + m_2 = n} \int_0^t |h(\tau,m_1) h(\tau,m_2) - c(\tau,m_1) c(\tau,m_2)| \, d \tau \\
& \le \frac{|n| |\omega|}{2} \sum_{m_1, m_2 \in \mathbb{Z}^\nu : m_1 + m_2 = n} \int_0^t [|h(\tau,m_1) - c(\tau,m_1)| |h(\tau,m_2)| + |h(\tau,m_2) - c(\tau,m_2)| |c(\tau,m_1)|] \, d \tau.
\end{align*}
Using the inductive assumption, we obtain
\begin{align*}
\frac{|n| |\omega|}{2} & \sum_{n_1, n_2 \in \mathbb{Z}^\nu : n_1 + n_2 = n} \int_0^t |h(\tau,n_1) - c(\tau,n_1)| |h(\tau,n_2)| \, d \tau \\
& \le \frac{|\omega|}{2} \int_0^t \sum_{n_1, n_2 \in \mathbb{Z}^\nu : n_1 + n_2 = n} \frac{B^{k}( |\omega| \tau)^{k-1}}{(k-1)!} \sum_{m = (m_1, \dots, m_k) \in \mathbb{Z}^{k\nu} : \sum_j m_j = n_1} \Big| \Big( \sum_j m_j \Big) + n_2 \Big| \times \\
& \qquad \sum_{\alpha \in \mathbb{B}^{(k-1)}} \prod_j |m_j|^{\alpha_j} \exp \Big( -\rho |m_j| \Big) \Big( B \exp \Big( -\rho|n_2| \Big) \Big) \, d \tau \\
& \le \frac{ B^{k+1} (|\omega| t)^{k}}{2k!} \sum_{m = (m_1, \dots, m_{k+1}) \in \mathbb{Z}^{(k+1) \nu} : \sum_j m_j = n} \sum_{\alpha \in \mathbb{B}^{(k)}} \prod_j |m_j|^{\alpha_j} \exp \Big( -\rho |m_j| \Big).
\end{align*}
Similarly,
\begin{align*}
\frac{|n| |\omega|}{2} & \sum_{n_1, n_2 \in \mathbb{Z}^\nu : n_1 + n_2 = n} \int_0^t |h(\tau,n_2) - c(\tau,n_2)| |c(\tau,n_2)| \, d \tau \\
& \le \frac{ B^{k+1} (|\omega| t)^{k}}{2k!} \sum_{m = (m_1, \dots, m_{k+1}) \in \mathbb{Z}^{(k+1) \nu} : \sum_j m_j = n} \sum_{\alpha \in \mathbb{B}^{(k)}} \prod_j |m_j|^{\alpha_j} \exp \Big( -\rho |m_j| \Big),
\end{align*}
and the assertion follows.
\end{proof}

\begin{corollary}\label{cor:12Fouriercontract}
Let $h(t,n), c(t,n)$ be as in Lemma~\ref{lem:12Fouriercontract}. With the constant $C_0$ from Lemma~\ref{lem:elementarycalculus}, we have for $k=1,\dots$,
\begin{equation}\label{eq:B4eststsIkkfactorialU1}
|h(t,n) - c(t,n)| \le \frac{B^{k+1} (4^{\nu+1}C_0^\nu \rho^{-(\nu+1)} |\omega| t)^{k}}{k!} \sum_{\alpha \in \mathbb{B}^{(k)}} \prod_j \alpha_j! \; .
\end{equation}
\end{corollary}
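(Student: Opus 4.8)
The plan is to deduce \eqref{eq:B4eststsIkkfactorialU1} from Lemma~\ref{lem:12Fouriercontract} by exactly the manipulation that turns Lemma~\ref{lem:12system1cestimates} into Corollary~\ref{cor:12system1cestimatesBk}. The structure of the bound \eqref{eq:Fouriercontrac2ST} is, term by term, identical to that of \eqref{eq:12FourierB4eststsIkk}, with $B$, $\rho$ playing the roles of $B_0$, $\kappa$; the only genuine difference is that the exponential weight furnished here is $\exp(-\rho|m_j|)$, whereas in Lemma~\ref{lem:12system1cestimates} it is the already-halved $\exp(-\tfrac{\kappa}{2}|m_j|)$. It is precisely this extra factor of $2$ in the exponential budget that accounts for the constant $2^{\nu+1}$ appearing in place of $4^{\nu+1}$, and for $\exp(-\tfrac{\rho}{2}|n|)$ appearing in place of $\exp(-\tfrac{\kappa}{4}|n|)$.

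First I would start from \eqref{eq:Fouriercontrac2ST} and, for each index $j$, split the weight as $\exp(-\rho|m_j|) = \exp(-\tfrac{\rho}{2}|m_j|)\exp(-\tfrac{\rho}{2}|m_j|)$, so that $\prod_j \exp(-\rho|m_j|) = \exp\bigl(-\tfrac{\rho}{2}\sum_j|m_j|\bigr)\prod_j\exp(-\tfrac{\rho}{2}|m_j|)$. Since the summation in \eqref{eq:Fouriercontrac2ST} is restricted to tuples with $\sum_j m_j = n$, one has $\sum_j|m_j|\ge|n|$, hence $\exp\bigl(-\tfrac{\rho}{2}\sum_j|m_j|\bigr)\le\exp(-\tfrac{\rho}{2}|n|)$. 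Pulling $\exp(-\tfrac{\rho}{2}|n|)$ out of the sum and then dropping the linear constraint $\sum_j m_j = n$, the remaining sum factorizes over the $k+1$ independent variables $m_j\in\mathbb{Z}^\nu$. Next I would apply Lemma~\ref{lem:elementarycalculus} with $\kappa$ replaced by $\rho/2$ to each factor $\sum_{m_j\in\mathbb{Z}^\nu}|m_j|^{\alpha_j}\exp(-\tfrac{\rho}{2}|m_j|)$, bounding it by $\alpha_j!\,(2C_0\rho^{-1})^{\alpha_j\nu}$ up to the coordinate-norm adjustments already absorbed into $C_0$ in the proof of Corollary~\ref{cor:12system1cestimatesBk}. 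Multiplying over $j$ and using \eqref{eq:12BsetsBBsumalphas}, namely $\sum_j\alpha_j = k$ for $\alpha\in\mathbb{B}^{(k)}$, the powers $(2C_0\rho^{-1})^{\alpha_j\nu}$ collapse to $(2C_0\rho^{-1})^{k\nu} = (2^\nu C_0^\nu\rho^{-\nu})^k$, while the factorials combine, after summation over $\alpha$, into the factor $\sum_{\alpha\in\mathbb{B}^{(k)}}\prod_j\alpha_j!$. Restoring the prefactor $B^{k+1}(|\omega|t)^k/k!$ from Lemma~\ref{lem:12Fouriercontract} and collecting the remaining numerical constants into a single $2^{\nu+1}$ then yields \eqref{eq:B4eststsIkkfactorialU1}.

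I do not expect a genuine obstacle here: no idea is needed beyond what already appears in the proof of Corollary~\ref{cor:12system1cestimatesBk}. The only point requiring attention is the bookkeeping of the absolute constants — verifying that halving the exponent \emph{once} (rather than, as in Lemma~\ref{lem:12system1cestimates}, starting from an exponent that has already been halved) produces the factor $2^{\nu+1}$ and not $4^{\nu+1}$, and that it is $\exp(-\tfrac{\rho}{2}|n|)$, not $\exp(-\tfrac{\rho}{4}|n|)$, that survives in front. Note also that, unlike Corollary~\ref{cor:cestimatesBkRE}, no appeal to Lemma~\ref{lem:PexpansionsumestA} or Stirling's formula is made at this stage, since the factor $\sum_{\alpha\in\mathbb{B}^{(k)}}\prod_j\alpha_j!$ is retained unestimated in the statement.
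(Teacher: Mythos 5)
Your proposal is correct and follows exactly the paper's own argument: split $\exp(-\rho|m_j|)$ in half, use $\sum_j|m_j|\ge|n|$ on the constraint $\sum_j m_j=n$ to extract $\exp(-\tfrac{\rho}{2}|n|)$, then apply Lemma~\ref{lem:elementarycalculus} with $\rho/2$ in place of $\kappa$ together with \eqref{eq:12BsetsBBsumalphas}. No difference in substance, only the same constant bookkeeping the paper itself leaves implicit.
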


\begin{proof}
Due to Lemma~\ref{lem:12Fouriercontract}, we have
\begin{align*}
|h(t,n) & - c(t,n)| \le \frac{B^{k+1}( |\omega|t)^{k}}{k!} \sum_{m = (m_1, \dots, m_{k+1}) \in \mathbb{Z}^{(k+1)\nu} : \sum_j m_j = n} \sum_{\alpha \in \mathbb{B}^{(k)}} \prod_j |m_j|^{\alpha_j} \exp \Big( -\rho |m_j| \Big) \\
& = \frac{B^{k+1}( |\omega|t)^{k}}{k!} \sum_{m = (m_1, \dots, m_{k+1}) \in \mathbb{Z}^{(k+1)\nu} : \sum_j m_j = n}
\prod_j \exp \Big( -\frac{\rho}{2}|m_j|\Big)
 \sum_{\alpha \in \mathbb{B}^{(k)}} \prod_j |m_j|^{\alpha_j} \exp \Big( -\frac{\rho}{2} |m_j| \Big).
\end{align*}
Combining this estimate with Lemma~\ref{lem:elementarycalculus} we obtain the statement.
\end{proof}

\begin{corollary}\label{cor:12FouriercontractUF}
Let $h(t,n), c(t,n)$ be as in Lemma~\ref{lem:12Fouriercontract}. Then, $h(t,n) = c(t,n)$ for all $n \in \mathbb{Z}^\nu$ and all $0 < t \le \min(t_0, \rho^\nu/(C_1 B 4^\nu C_0^\nu |\omega|))$, where $C_1$ is an absolute constant.
\end{corollary}

\begin{proof}
Combining Corollary~\ref{cor:12Fouriercontract} with the estimate \eqref{summationbasic} we get
\begin{align*}
|h(t,n) & - c(t,n)| \le \frac{B^{k+1} (4^{\nu+1} C_0^\nu \rho^{-\nu} |\omega| t)^{k}}{k!} \sum_{\alpha \in \mathbb{B}^{(k)}} \prod_j \alpha_j! \\
& \le \frac{B^{k+1} (4^{\nu+1} C_0^\nu \rho^{-\nu} |\omega| t)^{k}}{k!}  (2N)^k
\end{align*}
with $N=k+1$. Due to Stirling's formula,
\[
k! \gtrsim k^k e^{-k},\quad (k!)^{-1}(2N)^k\lesssim (2e)^{k}.
\]
Take here $0 < t < \rho^\nu/(C_1 B 4^\nu C_0^\nu |\omega|)$ with $C_1\gg 1$ being an absolute constant. Then
\begin{align*}
\lim_{k\to \infty} \frac{B^{k+1} (4^{\nu+1}C_0^\nu \rho^{-\nu} |\omega| t)^{k}}{k!}  (2N)^k=0.
\end{align*}
This implies $h(t,n) = c(t,n)$ for all $n \in \mathbb{Z}^\nu$ for any $0 < t \le \min(t_0, \rho^\nu/(C_1 B_0 4^\nu C_0^\nu |\omega|))$.
\end{proof}

\begin{corollary}\label{cor:12FouriercontractU}
Let
\begin{equation}\label{eq:12FourierexpUPr}
u(t,x) = \sum_{n \in \mathbb{Z}^\nu} c(t,n) \exp(i x n \omega), \quad v(t,x) = \sum_{n \in \mathbb{Z}^\nu} h(t,n) \exp(i x n \omega)
\end{equation}
with $|c(t,n)|, |h(t,n)| \le B \exp(-\rho |n|)$, $n \in \mathbb{Z}^\nu$, $\rho > 0$. Assume that both $u,v$ obey the KdV equation,
$$
\partial_t u + \partial^3_x u + u \partial_x u = 0, \quad \quad \partial_t v + \partial^3_x v + v \partial_x v = 0
$$
for $0 \le t \le t_0$, $x \in \mathbb{R}$. Assume also $v(0,x) = u(0,x)$ for all $x \in \mathbb{R}$. Then, $v(t,x) = u(t,x)$ for $0 < t < \min(t_0,\rho^\nu/(C_1 B 4^\nu C_0^\nu |\omega|))$ and $x \in \mathbb{R}$. Here $C_1$ is an absolute constant.
\end{corollary}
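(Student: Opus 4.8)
The plan is to pass to Fourier coefficients and then invoke the coefficient-uniqueness results established above. I would begin by applying Lemma~\ref{lem:12Fourier2U} to $u$, whose coefficients I keep denoting $c(t,n)$, and to $v$, with coefficients $h(t,n)$. The hypotheses of that lemma are met: the bounds $|c(t,n)|,|h(t,n)|\le B\exp(-\rho|n|)$ give $\sum_{n}|c(t,n)||n|^{3}<\infty$ and likewise for $h$, and the non-degeneracy $n\omega\neq0$ for $n\neq0$ (needed for Lemma~\ref{lem:12Fourier2U}, and in any case for the expansions in \eqref{eq:12FourierexpUPr} to determine their coefficients) is tacit throughout. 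So $c(t,n)$ and $h(t,n)$ both satisfy the integral system \eqref{eq:Fouriercontrac1} on $[0,t_{0})$. Next, since $u(0,x)=v(0,x)$ for all $x$, the function $x\mapsto\sum_{n}\bigl(c(0,n)-h(0,n)\bigr)e^{ixn\omega}$ vanishes identically, so Lemma~\ref{lem:12Fourier1U}, applied with $\alpha_{n}=n\omega$ (pairwise distinct) and the absolutely summable sequence $c(0,n)-h(0,n)$, gives $c(0,n)=h(0,n)$ for all $n$.

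At this point $c$ and $h$ satisfy all the hypotheses of Lemma~\ref{lem:12Fouriercontract} on every interval $[0,t_{1})$ with $t_{1}\le t_{0}$: they obey \eqref{eq:Fouriercontrac1}, share the exponential bound, and coincide at $t=0$. Taking $t_{1}=\min\bigl(t_{0},\ \rho^{\nu}/(4B2^{\nu}C_{0}^{\nu}|\omega|)\bigr)$, Corollary~\ref{cor:12FouriercontractUF} gives $c(t,n)=h(t,n)$ for every $n\in\mathbb{Z}^{\nu}$ and every $t\in[0,t_{1})$; summing against $e^{ixn\omega}$ yields $u(t,x)=v(t,x)$ on this interval, which is the assertion.

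The argument is essentially an assembly of the preceding lemmas, so I do not expect a serious obstacle; the only delicate point is the appearance of the time threshold $\rho^{\nu}/(4B2^{\nu}C_{0}^{\nu}|\omega|)$ in Corollary~\ref{cor:12FouriercontractUF}, and this is exactly where the combinatorics is spent. The estimate \eqref{eq:B4eststsIkkfactorialU1} of Corollary~\ref{cor:12Fouriercontract} bounds $|h(t,n)-c(t,n)|$ by $\frac{B^{k+1}(2^{\nu+1}C_{0}\rho^{-\nu}|\omega|t)^{k}}{k!}\exp\!\bigl(-\tfrac{\rho}{2}|n|\bigr)\sum_{\alpha\in\mathbb{B}^{(k)}}\prod_{j}\alpha_{j}!$ for every $k\ge1$; since $\mathbb{B}^{(k)}\subset\mathfrak{A}_{k+1}(k)$, Lemma~\ref{lem:PexpansionsumestA} controls $\sum_{\alpha\in\mathbb{B}^{(k)}}\prod_{j}\alpha_{j}!$ by a power of $k$, and Stirling's formula then reduces $\frac{1}{k!}\sum_{\alpha\in\mathbb{B}^{(k)}}\prod_{j}\alpha_{j}!$ to a quantity of the shape $\mathrm{const}\cdot C_{2}^{\,k}$. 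Hence, for $|\omega|t$ below the stated value the right-hand side of \eqref{eq:B4eststsIkkfactorialU1} is at most $\mathrm{const}\cdot\theta^{k}\exp(-\tfrac{\rho}{2}|n|)$ with $\theta<1$, and letting $k\to\infty$ forces $h(t,n)=c(t,n)$. If one wished to propagate this equality to all of $[0,t_{0})$, one would re-run the local statement with $t_{1}$ as a new initial time and iterate.
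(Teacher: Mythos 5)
Your argument is correct and follows essentially the same route as the paper: pass to Fourier coefficients via Lemma~\ref{lem:12Fourier2U}, verify the hypotheses of Lemma~\ref{lem:12Fouriercontract}, and conclude with Corollary~\ref{cor:12FouriercontractUF}. You additionally spell out two points the paper leaves implicit --- deducing $c(0,n)=h(0,n)$ from $u(0,\cdot)=v(0,\cdot)$ via Lemma~\ref{lem:12Fourier1U}, and tracing the time threshold to the decay of the bound in Corollary~\ref{cor:12Fouriercontract} as $k\to\infty$ --- which is a welcome clarification rather than a deviation.
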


\begin{proof}
Due to Lemma~\ref{lem:12Fourier2U}, the equations \eqref{eq:Fouriercontrac1} hold. Thus, $h(t,n), c(t,n)$ obey the conditions of Lemma~\ref{lem:12Fouriercontract}. Therefore the statement follows from Corollary~\ref{cor:12FouriercontractUF}.
\end{proof}

\begin{proof}[Proof of Theorem A]
It follows from Corollary~\ref{cor:cestimatesBkRE} that there exists an absolute constant $C_2$ such that for each $0 < t < \kappa^\nu/(B_0 C_2^\nu |\omega|)$ and $n \in \mathbb{Z}^\nu$, the following limit
$$
c^{(0)}(t,n) = \lim_{k \rightarrow \infty} c_{k}(t,n)
$$
exists and obeys
$$
|c^{(0)}(t,n) - c_{k-1}(t,n)| \le 2 B_0^{k+1} (4^{\nu+1} C_0^\nu \kappa^{-\nu} |\omega| t)^{k} \exp \Big( -\frac{\kappa}{4} |n| \Big).
$$
It follows from Corollary~\ref{cor:12system1cestimates} that
$$
|c^{(0)}(t,n)| \le 2 B_0 \exp \Big( -\frac{\kappa |n|}{2} \Big).
$$
Using these estimates, one derives from \eqref{eq:12FourierFdefi1a} the following system of equations for $c^{(0)}(t,n)$, $n \in \mathbb{Z}^\nu$:
$$
c^{(0)}(t,n) = c(n)\exp(i t(n\omega)^3) - \frac{i n \omega}{2} \int_0^t \exp(i(t - \tau)(n\omega)^3) \sum_{m_1, m_2 \in \mathbb{Z}^\nu : m_1 + m_2 = n} c^{(0)}(\tau,m_1) c^{(0)}(\tau, m_2) \, d \tau.
$$
Due to Lemma~\ref{lem:12Fourier2}, the function
$$
u = \sum_{n \in \mathbb{Z}^\nu} c^{(0)}(t,n) \exp(i x n \omega)
$$
obeys the following differential equation,
$$
\partial_t u = -\partial^3_x u + v,
$$
with $u(0,x) = u_0(x)$, where
\begin{align*}
u_0(x) & = \sum_{n \in \mathbb{Z}^\nu} c(n) \exp(i x n \omega), \\
v & = -\sum_{n \in \mathbb{Z}^\nu} \sum_{m_1, m_2 \in \mathbb{Z}^\nu : m_1 + m_2 = n} \frac{i (m_1 + m_2) \omega}{2} c^{(0)}(t, m_1) c^{(0)}(t, m_2) \exp(i x n \omega).
\end{align*}
Clearly, $v(t,x) = -\frac{1}{2} \partial_x (u^2(t,x))$. This proves the existence statement in Theorem~A.

The uniqueness statement in Theorem~A is due to Corollary~\ref{cor:12FouriercontractU}. This finishes the proof of the theorem.
\end{proof}

\begin{remark}\label{rem:continuity}
It follows from Corollary~\ref{cor:12system1cestimates} that the derivatives $\partial_t u,\partial^\alpha_x u$ are continuous for $0 \le t \le t_0$, $x \in \mathbb{R}$, $\alpha \le 3$.
\end{remark}

Now we turn to the proof of Theorem~B.

\begin{proof}[Proof of Theorem~B]
The uniqueness statement in Theorem~B follows from the uniqueness statement in Theorem~A by standard arguments.

Recall the following fundamental result by Lax; see \cite{Lax}. Let $u(t,x)$ be a function defined for $0 \le t < t_0$, $x \in \mathbb{R}$ such that $\partial^\alpha_x u$ exists and is continuous and bounded in both variables for $0 \le \alpha \le 3$. Assume that $u$ obeys the KdV
equation
$$
\partial_t u + \partial_x^3 u + u \partial_x u = 0.
$$
Consider the Schr\"odinger operators
\begin{equation} \label{eq:1-1u}
[H_t \psi](x) = -\psi''(x) +  \frac16 u(t,x) \psi(x), \qquad x \in \IR.
\end{equation}
Then,
\begin{equation}\label{eq:isospectral}
\sigma(H_t) = \sigma(H_0) \quad \text{ for all } t.
\end{equation}

We now invoke the following statements from \cite{DG}, see Theorems~A and B in that work. Consider the Schr\"odinger operator
$$
[H \psi](x) = -\psi''(x) +  V(x) \psi(x), \qquad x \in \IR,
$$
where $V(x)$ is a real quasi-periodic function
$$
V(x) = \sum_{n \in \zv}\, c(n) e^{i x n \omega}.
$$
Assume that the Fourier coefficients $c(m)$ obey
$$
|c(m)| \le \ve \exp(-\kappa_0 |m|).
$$
Assume that the vector $\omega$ satisfies the following Diophantine condition:
\begin{equation}\label{eq:1-8}
|n \omega| \ge 2 \pi a_0 |n|^{-b_0}, \quad n \in \zv \setminus \{0\}
\end{equation}
with some $0 < a_0 < 1$, $\nu-1 < b_0 < \infty$.

Then, there exists $\ve_0 = \ve_0(\ka_0, a_0, b_0) > 0$ such that if $\ve \le \ve_0$, the spectrum of $H$ has the following description \cite[Theorem~A]{DG},
$$
\sigma(H) = [E_{\min} , \infty) \setminus \bigcup_{m \in \zv \setminus \{0\} :} (E^-_m, E^+_m),
$$
where the gaps $(E^-_m, E^+_m)$ obey $E^+_m - E^-_m \le 2 \ve \exp(-\frac{\kappa_0}{2} |m|)$ \cite[Theorem~B]{DG}. Furthermore, there exists $\ve^\zero = \ve^\zero(\ka_0, a_0, b_0) > 0$ such that if the gaps $(E^-_m, E^+_m)$ obey $E^+_m - E^-_m \le \ve \exp(-\kappa |m|)$ with $\ve < \ve^\zero$, $\kappa > 4 \kappa_0$ then, in fact, the Fourier coefficients $c(m)$ obey $|c(m)| \le (2\ve)^{1/2} \exp(-\frac{\kappa}{2} |m|)$ \cite[Theorem~B]{DG}.

Let
$$
u_0(x) = \sum_{n \in \zv}\, c_0(n) e^{i x n \omega}.
$$
Assume that the vector $\omega$ satisfies the Diophantine condition \eqref{eq:1-8}. Set
$$
\ve^\one = \min \left( \frac{\ve_0(a_0,b_0,\kappa_0/8)^4}{2} , \frac{\ve^\zero(a_0,b_0,\kappa_0/2)}{4} \right).
$$
Assume that the Fourier coefficients $c_0(m)$ obey
\begin{equation}\label{eq:1-fouriercoeffa}
|c_0(m)| \le \ve^{\one} \exp(-\kappa_0 |m|).
\end{equation}
By Theorem A there exists $t_0 = t_0(a_0,b_0,\kappa_0) > 0$ such that for $0 \le t < t_0$, $x \in \mathbb{R}$, one can define a function
$$
u(t,x) = \sum_{n \in \zv}\, c(t,n) e^{i x n \omega},
$$
with $|c(t,n)| \le 2 \ve^\one \exp(-\frac{\kappa_0}{2} |n|)$, which obeys equation \eqref{eq:1.KdV} with the initial condition $u(0,x) = u_0(x)$. Moreover, due to Remark~\ref{rem.cont} and Corollary~\ref{cor:12system1cestimates}, the derivatives $\partial_t u,\partial^\alpha_x u$ are continuous for $0 \le t \le t_0$, $x \in \mathbb{R}$, $\alpha \le 3$.

Now assume that for some $T > 0$, one can define a function
$$
u(t,x) = \sum_{n \in \zv}\, c(t,n) e^{i x n \omega},\quad 0 \le t \le T
$$
with $|c(t,n)| \le (\ve^\one)^{1/4} \exp(-\frac{\kappa_0}{8} |n|)$ for $0 \le t \le T$ and $n \in \zv$, which obeys equation \eqref{eq:1.KdV} for $0 \le t \le T$ with the initial condition $u(0,x) = u_0(x)$. Moreover, assume that the derivatives $\partial_t u,\partial^\alpha_x u$ are continuous for $0 \le t \le T$, $x \in \mathbb{R}$, $\alpha \le 3$. As we mentioned, due to Theorem~A, such a $T$ exists.

We claim that, in fact, the Fourier coefficients obey $|c(t,n)| \le 6(2 \ve^\one)^{1/2} \exp(-\frac{\kappa_0}{4} |n|)$. Indeed, let $H_t$ be as in \eqref{eq:1-1u}. Since
\begin{equation}\label{eq:1-fouriercoeffa6}
\frac{|c_0(m)|}{6}<|c_0(m)| \le \ve^{\one} \exp(-\kappa_0 |m|).
\end{equation}
and $(\ve^\one)^{1/4} < \ve_0(a_0,b_0,\kappa_0/8)$, \cite[Theorem~A]{DG} applies with $\frac{u_0}{6}$ in the role of $V$. Let $(E^-_m(t), E^+_m(t))$ be the gaps in the spectrum of $H_t$. By \cite[Theorem~B]{DG}, $E^+_m(0) - E^-_m(0) \le 2 \ve^\one \exp(-\frac{\kappa_0}{2} |m|)$. Thus, by \eqref{eq:isospectral}, $E^+_m(t) - E^-_m(t) \le 2 \ve^\one \exp(-\frac{\kappa_0}{2} |m|)$. Since $2 \ve^\one < \ve^\zero(a_0,b_0,\kappa_0/2)$, by \cite[Theorem~B]{DG}, applied to $\frac{u(t,x)}{6}$ in the role of $V$ the Fourier coefficients $\frac{c(t,m)}{6}$ obey $\frac{|c(t,m)|}{6} \le (2 \ve^\one)^{1/2} \exp(-\frac{\kappa_0}{4} |m|)$, as claimed. Due to Theorem~A applied to $u(T,x)$ in the role of the initial condition there exists $t_0 = t_0(a_0,b_0,\kappa_0/4) > 0$ such that for $0 \le t \le t_0$, $x \in \mathbb{R}$, one can define a function
$$
\tilde u(t,x) = \sum_{n \in \zv}\, \tilde c(t,n) e^{i x n \omega},
$$
with $|\tilde c(t,n)| \le 12 (2\ve^\one)^{1/2} \exp(-\frac{\kappa_0}{8} |n|) < (\ve^\one)^{1/4} \exp(-\frac{\kappa_0}{8} |n|)$, which obeys equation \eqref{eq:1.KdV} with the initial condition $\tilde u(0,x) = u(T,x)$. Moreover, due to Remark~\ref{rem.cont} and Corollary~\ref{cor:12system1cestimates}, the derivatives $\partial_t \tilde u, \partial^\alpha_x \tilde u$ are continuous for $0 \le t \le t_0$, $x \in \mathbb{R}$, $\alpha \le 3$. Consider the extension of the function $u(t,x)$ for $0  \le t \le T_1$, $x \in \mathbb{R}$ by setting $u(t,x) := \tilde u(t-T,x)$, $T \le t \le T_1$  with $T_1 = T + t_0$. Then,
$$
u(t,x) = \sum_{n \in \zv}\, c(t,n) e^{i x n \omega},\quad 0 \le t \le T_1
$$
with $|c(t,n)| \le (\ve^\one)^{1/4} \exp(-\frac{\kappa_0}{8} |n|)$ for $0 \le t \le T_1$ and $n \in \zv$ and it obeys equation \eqref{eq:1.KdV} for $0 \le t \le T_1$ with the initial condition $u(0,x) = u_0(x)$.

This argument shows that there exists
$$
u(t,x) =\sum_{n \in \zv}\, c(t,n) e^{i x n \omega},
$$
with $|c(t,n)| \le (\ve^\one)^{1/4} \exp(-\frac{\kappa_0}{8} |n|)$, which obeys the KdV equation \eqref{eq:1.KdV} for all $t$ and also the initial condition $u(0,x) = u_0(x)$. This finishes the proof of the theorem.
\end{proof}

\end{document}